\newtheorem{state}{Statement}[section]
\newtheorem{T}{Theorem}
\newtheorem*{theorem}{Main Theorem}
\newtheorem*{remark}{Remark}
\newtheorem{cor}{Corollary}
\newtheorem{prob}{Problem}
\newtheorem{lemm}[state]{Lemma}
\newtheorem{lemma}[state]{Lemma}
\newtheorem{prop}[state]{Proposition}
\newcommand*{\rom}[1]{\expandafter\@slowromancap\romannumeral #1@}
\def\@seccntformat#1{\csname the#1\endcsname. }
\def\@biblabel#1{#1.}
\title[On characterization by Gruenberg--Kegel graph of finite simple groups]{On characterization by Gruenberg--Kegel graph of finite simple exceptional groups of Lie type}
\author{Natalia~V.~Maslova, Viktor~V.~Panshin, and Alexey~M.~Staroletov}
\address{Natalia Vladimirovna Maslova\newline Krasovskii Institute of Mathematics and Mechanics UB RAS,\newline
16, S. Kovalevskaja str.,
Yekaterinburg, 620108, Russia\newline
Ural Federal University,\newline
19, Mira str., Yekaterinburg, 620002, Russia}
\email{butterson@mail.ru}
\address{Viktor Vladimirovich Panshin \newline
Novosibirsk State University, \newline
1, Pirogova str., Novosibirsk, 630090, Russia\newline
Sobolev Institute of Mathematics, \newline
4, Acad. Koptyug ave., Novosibirsk, 630090, Russia}
\email{v.pansh1n@yandex.ru}
\address{Alexey Mikhailovich Staroletov \newline
Sobolev Institute of Mathematics, \newline
4, Acad. Koptyug ave., Novosibirsk, 630090, Russia}
\email{staroletov@math.nsc.ru}
\begin{document}

\maketitle
\hfill In memory of Irina Dmitrievna Suprunenko

\begin{abstract}
The Gruenberg--Kegel graph $\Gamma(G)$ of a finite group $G$ is the graph whose vertex set is the set of prime divisors of $|G|$ and in which two distinct vertices $r$ and $s$ are adjacent if and only if there exists an element of order $rs$ in $G$.

A finite group $G$ is called {\it almost recognizable} (by Gruenberg--Kegel graph) if
there is only finite number of pairwise non-isomorphic finite groups having
Gruenberg--Kegel graph as $G$. If $G$ is not almost recognizable, then it is called {\it unrecognizable}
(by Gruenberg--Kegel graph).

Recently P.~J.~Cameron and the first author have proved that if a finite group is almost recognizable, then the group is almost simple. Thus, the question of which almost simple groups (in particular, finite simple groups) are almost recognizable is of prime interest. We prove that every finite simple exceptional group of Lie type,
which is isomorphic to neither ${^2}B_2(2^{2n+1})$ with $n\geq1$ nor $G_2(3)$ and whose Gruenberg--Kegel graph has at least three connected components, is almost recognizable. Moreover, groups $ {^2}B_2(2^{2n+1})$, where $n\geq1$, and $G_2(3)$ are unrecognizable.
\end{abstract}

\medskip

Throughout the paper we consider only finite groups and simple graphs, and henceforth the term group means finite group and the term graph means simple graph, that is undirected graph without loops and multiple edges.

Let $G$ be a group. The {\it spectrum} $\omega(G)$ is the set of all element orders of $G$. The {\it prime spectrum} $\pi(G)$ is the set of all primes belonging to $\omega(G)$. A graph $\Gamma(G)$ whose vertex set is $\pi(G)$ and in which two distinct vertices~$r$ and~$s$ are adjacent if and only if $rs \in \omega(G)$ is called the {\it Gruenberg--Kegel graph} or the {\it prime graph} of~$G$.

\smallskip

\noindent We say that the group $G$ is
\begin{itemize}
\item{\it recognizable} (by Gruenberg--Kegel graph) if for every group $H$ the equality $\Gamma(H)=\Gamma(G)$ implies that $G\cong H${\rm;}
\item  {\it $k$-recognizable} (by Gruenberg--Kegel graph), where $k$ is a positive integer, if there are exactly $k$ pairwise non-isomorphic groups having the same Gruenberg--Kegel graph as~$G${\rm;}
\item  {\it almost recognizable} (by Gruenberg--Kegel graph) if it is $k$-recognizable by Gruenberg--Kegel graph for a positive integer $k${\rm;}
\item {\it unrecognizable} (by Gruenberg--Kegel graph) if there are infinitely many pairwise non-isomorphic groups having the same Gruenberg--Kegel graph as $G$.
\end{itemize}
Note that groups can be characterized by various numerical sets. For example, if we replace $\Gamma(G)$ in these definitions with $\omega(G)$, then we obtain the corresponding definitions for recognizability by spectrum. Nevertheless, if the characterization set is not specified, we suppose that it is the Grunberg-Kegel graph. It is easy to see that if $\omega(G)=\omega(H)$ for groups $G$ and $H$, then
$\Gamma(G)=\Gamma(H)$, however, the converse implication is not true in general. Consider alternating groups $A_5$ and $A_6$. Clearly, $\Gamma(A_5)=\Gamma(A_6)$, where both graphs are empty graphs on three vertices $2$, $3$, and $5$, and, on the other hand, we see that $4\in\omega(A_6)\setminus\omega(A_5)$.

\smallskip

Recently P.~J.~Cameron and the first author have proved~\cite{CaMas} that a group $G$ is almost recognizable  if and only if each group $H$ with $\Gamma(G)=\Gamma(H)$ is almost simple. Thus, the question of which almost simple groups are almost recognizable is of prime interest. At the same paper~\cite{CaMas}, a survey of known results on recognition of simple groups has been presented. Note that there are not many completed results at the moment. The situation is much better in the case of the characterization problem by spectrum, where recognition is established for many nonabelian simple groups~\cite{survey}.

\smallskip

In~\cite{Mazurov2005}, V.~D.~Mazurov conjectured that if a simple group $G$ is not isomorphic to $A_6$ and $\Gamma(G)$ has at least three connected components, then $G$ is recognizable by its spectrum. This conjecture was proved in a series of papers, the final result was obtained in~\cite{Kondrat2015}, where the author proved that groups $E_7(2)$ and $E_7(3)$ are recognizable by Gruenberg--Kegel graph, therefore, these groups are recognizable by spectrum.

Connected components of Gruenberg--Kegel graphs of simple groups were described in~\cite{Williams,Kondrat1990}. A complete result after corrections of mistakes can be found, for example, in~\cite[Tables~1--3]{ak}. If $G$ is a simple group, then $\Gamma(G)$ has at least three connected components if and only if one of the following statements holds{\rm:}

$(1)$ $G \cong G_2(q)$, where $q$ is a power of $3$, or $G\cong {^2}G_2(q)$, where $q=3^{2n+1}>3${\rm;}

$(2)$ $G \cong {^2}B_2(q)\cong Sz(q)$, where $q=2^{2n+1}>2${\rm;}

$(3)$ $G \cong F_4(q)$, where $q$ is even, or $G\cong {^2}F_4(q)$ for $q=2^{2n+1}>2${\rm;}

$(4)$ $G \cong E_8(q)${\rm;}

$(5)$ $G \cong A_1(q) \cong PSL_2(q)$, where $q>3${\rm;}

$(6)$ $G\cong {^2}D_n(3) \cong P\Omega_{2n}^-(3)$, where $n = 2^m + 1 \ge 3$ is a prime{\rm;}

$(7)$ $G$ is one of the following finite simple groups of Lie type{\rm:} ${^2}A_5(2)\cong PSU_6(2)$, $E_7(2)$, $E_7(3)$, $A_2(4) \cong PSL_3(4)$, ${^2}E_6(2)${\rm;}

$(8)$ $G$ is one of the following finite simple sporadic groups{\rm:} $M_{11}$, $M_{23}$, $M_{24}$, $J_3$, $HiS$, $Suz$, $Co_2$, $Fi_{23}$, $F_3$, $F_2$, $M_{22}$, $J_1$, $O'N$, $LyS$, $Fi'_{24}$, $F_1$, $J_4${\rm;}

$(9)$ $G \cong A_n$, where $n>6$ and both $n$ and $n-2$ are primes.

We consider the recognition problem of the simple exceptional groups of Lie type whose Gruenberg--Kegel graphs have at least three connected components. The main result of this paper is the
following statement.

\begin{theorem} Every finite simple exceptional group of Lie type,
which is isomorphic to neither ${^2}B_2(2^{2n+1})$ with $n\geq1$ nor $G_2(3)$ and whose Gruenberg--Kegel graph has at least three connected components, is almost recognizable by Grunberg--Kegel graph. Moreover, groups $ {^2}B_2(2^{2n+1})$, where $n\geq1$, and $G_2(3)$ are unrecognizable by Gruenberg--Kegel graph.
\end{theorem}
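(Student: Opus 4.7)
The plan is to open with the Cameron--Maslova reduction proved in~\cite{CaMas}: a finite group $L$ is almost recognizable if and only if every group $H$ with $\Gamma(H)=\Gamma(L)$ is almost simple. Thus the positive part of the theorem reduces to showing, for each simple exceptional $L$ listed in items (1)--(4) and the exceptional entries of item (7) of the introduction, except ${^2}B_2(2^{2n+1})$ and $G_2(3)$, that every $H$ with $\Gamma(H)=\Gamma(L)$ is almost simple. The classical Gruenberg--Kegel structure theorem says that a group with disconnected prime graph is either Frobenius, $2$-Frobenius, or admits a normal series $1\trianglelefteq K\trianglelefteq M\trianglelefteq H$ with $K$ nilpotent, $M/K$ nonabelian simple, and $H/M$ a solvable $\pi_1$-group, where $\pi_1$ is the component of $\Gamma(H)$ containing $2$. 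Since Frobenius and $2$-Frobenius groups have at most two prime graph components, these two cases are excluded immediately by the hypothesis of at least three components, and the task shrinks to forcing $K=1$ in the third case.

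For the positive direction I would then proceed family by family over $L\in\{G_2(q):q=3^n,\,n\geq 2\}$, ${^2}G_2(q)$, $F_4(q)$ with $q$ even, ${^2}F_4(q)$, $E_8(q)$, ${^2}E_6(2)$, $E_7(2)$, $E_7(3)$. For each $L$, I would first enumerate the candidate simple groups $S=M/K$ whose Gruenberg--Kegel graph is compatible with $\Gamma(L)$, using the component classification in~\cite{Williams,Kondrat1990,ak}. Typically the primes in $\pi(L)\setminus\pi_1$ divide a few large cyclotomic values $\Phi_d(q)$ whose simultaneous appearance as isolated components in $\Gamma(S)$ restricts $S$ to a very short list inevitably containing $L$ itself. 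Once the candidate $S$ is pinned down, the main step is to rule out a nontrivial nilpotent kernel $K$: for every prime $t\in\pi(K)\subseteq\pi_1$, one analyses the $M/K$-action on the $t$-part of $K$ and, via Brauer character and fixed-point arguments, produces an element of $H$ of order $ts$ for some $s\in\pi(M/K)$ lying in a component of $\Gamma(L)$ different from $\pi_1$, contradicting the absence of the edge $\{t,s\}$ in $\Gamma(L)$. This forces $K=1$ and $M\cong L$, whence $H$ is almost simple.

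For the unrecognizability of ${^2}B_2(2^{2n+1})$ and $G_2(3)$, the plan is to exhibit infinite families of pairwise non-isomorphic groups with the same Gruenberg--Kegel graph. The natural construction is to take semidirect products $H_m=V_m\rtimes L$, where $V_m$ is a faithful $\mathbb{F}_p[L]$-module of growing dimension $m$ with $p\in\pi_1(\Gamma(L))$, chosen so that every nontrivial element of $L$ of order coprime to $p$ acts without nonzero fixed points on $V_m$. Under this condition no new prime adjacencies arise in $\Gamma(H_m)$, so $\Gamma(H_m)=\Gamma(L)$, while $|H_m|\to\infty$ as $m\to\infty$ produces infinitely many pairwise non-isomorphic examples. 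For $L={^2}B_2(q)$ one has $\pi_1=\{2\}$ and one takes $p=2$, using modular $2$-modules of $^2B_2(q)$ whose nonzero vectors have trivial stabilisers under every element of odd order, appealing to the known character theory of Suzuki groups. For $L=G_2(3)$ one has $\pi_1=\{2,3\}$ and one uses $2$- or $3$-modular modules with analogous fixed-point behaviour on elements of orders $7$ and $13$.

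The main obstacle, as I foresee it, is case (iii) of the positive direction: uniformly excluding a nontrivial nilpotent kernel $K$ across all exceptional families. The difficulty comes from the varying structure of $\pi_1$ and of the candidate simple section $S=M/K$: while for ${^2}F_4(q)$ and $F_4(2^k)$ one essentially has $\pi_1=\{2\}$ and a very short list of $S$, the $E_8(q)$ and ${^2}E_6(2)$ cases involve a larger $\pi_1$ and a wider range of potential simple sections, forcing a finer representation-theoretic analysis of $S$-actions to generate a forbidden adjacency. A secondary delicate point is verifying, in the unrecognizability constructions for ${^2}B_2(q)$ and $G_2(3)$, that the chosen modules $V_m$ really do give $\Gamma(H_m)=\Gamma(L)$, which amounts to checking in each case that no element of $L$ of order coprime to $p\in\pi_1$ has nonzero fixed points on $V_m$, and thereby on producing a genuine infinite sequence rather than a finite one.
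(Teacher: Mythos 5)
Your overall architecture is sound and close to the paper's: the positive direction does reduce, via the Cameron--Maslova criterion, to excluding a non-trivial nilpotent kernel $K$ below a simple section $S\cong L$, and the paper carries this out exactly as you sketch (using Lemma~\ref{vas} rather than the Gruenberg--Kegel--Williams trichotomy, a cosmetic difference since three components kill the Frobenius and $2$-Frobenius cases either way; the kernel is then eliminated by unisingularity in the defining characteristic and by Hall--Higman type minimal-polynomial results in cross characteristic). Note also that for unrecognizability a \emph{single} group with the same graph that is not almost simple already suffices by \cite[Theorem~1.2]{CaMas}, so your infinite family $V_m=V^{\oplus m}$ is harmless overkill; for ${^2}B_2(q)$ your plan works and is what the paper does with the natural $4$-dimensional $\mathbb{F}_q$-module from \cite[Lemma~3.6]{GuTi03}.

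The genuine gap is in your treatment of $G_2(3)$. You propose a $2$- or $3$-modular $G_2(3)$-module on which all elements of orders $7$ and $13$ act fixed-point-freely, but no such module exists. In characteristic $3$ this is blocked by unisingularity: $G_2(q)$ with $q$ odd appears in Lemma~\ref{Unisingular}(v), so every element of $G_2(3)$ has nonzero fixed points on every abelian $3$-group it acts on. In characteristic $2$ it is blocked by the Tiep--Zalesski theorem (Lemma~\ref{TiepZalThm}): for any non-trivial irreducible representation $\phi$ of $G_2(3)$ in characteristic coprime to $3$ and any $g$ of order $7$ or $13$, the minimal polynomial of $\phi(g)$ has degree $|g|$, hence equals $x^{|g|}-1$, hence $1$ is an eigenvalue and $g$ fixes a nonzero vector (Lemma~\ref{Eigenvector1}); passing to a minimal submodule shows the same for reducible modules. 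Since $\pi_1(G_2(3))=\{2,3\}$, your construction has no characteristic left to use. The paper's way around this is the observation that $\Gamma(G_2(3))=\Gamma(PSL_2(13))$, after which one takes the $6$-dimensional $2$-modular $PSL_2(13)$-module, on which the elements of orders $7$ and $13$ really are fixed-point-free (checked via Lemma~\ref{BrChar} and the Brauer character tables); the resulting extension $V\rtimes PSL_2(13)$ is the required non-almost-simple witness. Without some such substitution of the acting simple group, your $G_2(3)$ argument cannot be completed.
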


In fact, much has been known about the groups in the theorem before in the context of the recognition problem. The groups $E_7(2)$, $E_7(3)$, and ${^2}E_6(2)$ are known to be recognizable (see~\cite{Kondrat2015} and \cite{Kondrat'ev2E6(2)}). The recognizability of simple groups ${^2}G_2(q)$ with $q>3$ was established in \cite{Za06}.

If $G$ is a nonabelian simple group, then we say that $G$ is {\it quasirecognizable} by its Gruenberg--Kegel graph if every group $H$ with $\Gamma(G)=\Gamma(H)$ has a unique nonablelian composition factor $S$ and $S\cong G$.
In~\cite{Zhang_Shi_Shen}, it was proved that each group $G_2(q)$, where $q$ is an odd power of $3$, is quasirecognizable; however, this result contains an error since $\Gamma(G_2(3))=\Gamma(PSL_2(13))$. Quasirecognizability of the simple groups $F_4(q)$ and ${^2}F_4(q)$, where $q>2$ is even,  was proved in~\cite{Khosravi_Babai_2011} and~\cite{Akhlaghi_Khatami_Khosravi_2}, respectively. We prove Main Theorem for groups $G_2(q)$, where $q>3$, $F_4(q)$, and $^2F_4(q)$ in Section~\ref{F4^2F4G2Section}. The quasirecognizability of groups ${^2}B_2(q)$ was proved in~\cite{Zhang_Shi_Shen}. We consider the groups ${^2}B_2(q)$ and $G_2(3)$ in Section~\ref{B2Section}.

In~\cite{Za13}, A.~V.~Zavarnitsine proved that if $G=E_8(q)$, where $q \equiv 0, \pm 1 \pmod{5}$, and $H$ is a group such that $\Gamma(H)=\Gamma(G)$, then $H \cong E_8(u)$ for a prime power $u \equiv 0, \pm 1\pmod{5}$. In Section~\ref{E8Section}, we prove a similar result for groups $E_8(q)$, where $q\equiv \pm 2 \pmod{5}$. Therefore, as a corollary, Main Theorem holds for the groups $E_8(q)$. However, the question of quasirecognizability for the groups $E_8(q)$ is still open (see Section~\ref{E8Section} for details).

Provide a mini-survey of known results on characterization by Gruenberg--Kegel graph of the remaining simple groups whose Gruenberg--Kegel graphs have at least three connected components.

First consider groups $PSL_2(q)$, where $q=p^k$ and $p$ is a prime. The groups $PSL_2(4) \cong PSL_2(5)$, $PSL_2(7)$, $PSL_2(8)$, and $PSL_2(9) \cong A_6$ are unrecognizable while the groups $PSL_2(11)$ and $PSL_2(25)$ are $2$-recognizable (see, for example, \cite[Theorem~1.7]{CaMas}, \cite[Theorem~5]{KondrKhr2011} and \cite{Hagie_2003}). The group $PSL_2(49)$ is $5$-recognizable (see~\cite{MasNechKhis}, this result is obtained as a consequence of \cite[Theorem~5]{KondrKhr2011} and direct calculations with using Lemma~\ref{BrChar}). Let $k=1$ and $p>11$. In \cite{3Khosravi_2007_2}, it was proved that if $p \not \equiv 1 \pmod{12}$, then $G$ is recognizable, and if $p\equiv 1 \pmod{12}$, then $G$ is quasirecognizable. However, this result has at least one mistake since the group $PSL_2(13)$ is not quasirecognizable. If $p$ is odd and $k>2$ or $k=2$ and $p>7$, then $G$ is $(k,2)$-recognizable (see~\cite{BKhosravi} and \cite{ABKhosravi}). If $q>8$ is even, then $G$ is quasirecognizable (see~\cite[Theorem~3.3]{3Khosravi_2007}). In general, we suggest that the results on recognition of the groups $PSL_2(q)$ by Gruenberg--Kegel graph need a careful revision since the authors in their proofs refer to the paper~\cite{Iiyori_Yamaki} which contains a number of rather serious inaccuracies.

If  $G\cong P\Omega_{2n}^-(3)$, where $n \ge 3$ is a prime, then $G$ is recognizable (see~\cite{Ghasemabadi_Iranmanesh_Ahanjideh_2012}). The group $PSU_6(2)$ is $2$-recognizable (see~\cite[Theorem~2]{KondrRecSpor2}), the group $PSL_3(4)$ is $5$-recognizable since $\Gamma(PSL_3(4))=\Gamma(PSL_2(49))$.

The problem of recognition by Gruenberg--Kegel graph has been solved for all finite simple sporadic groups. In particular, the groups $J_1$, $M_{22}$, $M_{23}$, $M_{24}$, $Co_2$, $J_4$, $J_3$, $Suz$, $O'N$, $LyS$, $F_3$, $Fi_{23}$, $Fi'_{24}$, $F_2$, and $F_1$ are recognizable (see~\cite[Theorem~3]{Hagie_2003}, \cite[Theorem~B]{Za06}, \cite[Theorem~1]{KondrRecSpor2}, and~\cite{LeePopiel}) while the groups $M_{11}$ and $HiS$ are $2$-recognizable (see~\cite[Theorem~3]{Hagie_2003} and~\cite[Theorem~2]{KondrRecSpor2}, respectively).

Let $G \cong A_n$, where $n > 6$ and both $n$ and $n-2$ are primes. If $n=7$, then $\Gamma(G)=\Gamma(PSL_2(49))$ and, therefore, $G$ is $5$-recognizable. If $n=13$, then $G$ is recognizable (see~\cite[Lemma~24]{Staroletov2}), and if $n \ge 19$, then $G$ is known to be quasirecognizable (see \cite[Theorem~1]{Staroletov2}). The question of recognizability by Gruenberg--Kegel graph of the groups $A_n$, where $n\ge 19$ is a prime and $n-2$ is also a prime, is still open, and the conjecture~\cite[Conjecture~1]{Staroletov2} is that these groups are recognizable.

\medskip

\section{Preliminaries}

Let $n$ be an integer. Denote by $\pi(n)$ the set of all prime divisors of $n$. Let $\pi$ be a set of primes. The largest divisor $m$ of $n$ such that $\pi(m)\subseteq \pi$ is called a {\it $\pi$-part} of $n$ and is denoted by $n_\pi$. By $\pi'$ we denote the set of primes which do not belong to $\pi$. If $\pi$ consists of a unique element $p$, then we will write $n_p$ and $n_{p'}$ instead of $n_{\{p\}}$ and $n_{\{p\}'}$, respectively.

\smallskip

If $n$ is an integer and $r$ is an odd prime with $(r, n) = 1$, then $e(r, n)$ denotes the multiplicative order of $n$ modulo $r$. Given an odd integer $n$, we put $e(2, n) = 1$ if $n\equiv1\pmod{4}$, and $e(2,n)=2$ otherwise.

The following lemma is proved in~\cite{Bang}, and also in~\cite{Zs92}.
\begin{lemm}[{\rm Bang–Zsigmondy}]\label{zsigm}
Let $q$ be an integer greater than $1$. For every positive integer $m$ there exists a prime $r$ with $e(r,q)=m$ but for the cases $q=2$ and $m=1$, $q=3$ and $m=1$, and $q=2$ and $m=6$.
\end{lemm}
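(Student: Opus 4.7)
The plan is to prove this classical result by the cyclotomic polynomial method, following the argument of Bang and Zsigmondy. Introduce the $m$-th cyclotomic polynomial $\Phi_m(x) \in \mathbb{Z}[x]$, determined by the identity
$$x^m - 1 = \prod_{d \mid m} \Phi_d(x).$$
A prime $r$ with $e(r,q) = m$ is exactly a prime divisor of $q^m - 1$ that does not divide $q^d - 1$ for any proper divisor $d$ of $m$; such primes are traditionally called \emph{primitive prime divisors} of $q^m - 1$. The strategy is to show that $\Phi_m(q)$ admits such a prime divisor outside the three listed exceptional cases.

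First, I would prove the standard auxiliary lemma: if a prime $r$ divides $\Phi_m(q)$ and $r \nmid q$, then either $e(r,q) = m$, or $r \mid m$. Indeed, putting $d = e(r,q)$ one has $d \mid m$; if $d < m$, then $x^m - 1$ has $q$ as a repeated root modulo $r$, and a short derivative computation shows $r \mid m/d$, so $r \mid m$.

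Next, I would bound the $r$-adic valuation of $\Phi_m(q)$ for primes $r$ dividing $m$: by the lifting-the-exponent lemma (handling $r = 2$ separately in accordance with the convention $e(2,q) = 1$ or $2$ given above), one obtains $v_r(\Phi_m(q)) \leq 1$ outside a few small cases, and in all cases $v_r(\Phi_m(q)) \cdot r$ is bounded by a fixed multiple of the corresponding factor of $m$. Combined with the first step this yields an upper bound of the form $\Phi_m(q) \leq m$ whenever no primitive prime divisor exists.

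Finally, I would use the lower estimate $\Phi_m(q) = \prod_\zeta (q - \zeta) \geq (q-1)^{\varphi(m)}$, where $\zeta$ ranges over primitive $m$-th roots of unity, together with standard bounds on $\varphi(m)$. The inequality $(q-1)^{\varphi(m)} > m$ leaves only finitely many pairs $(q,m)$ to inspect, and a direct check reveals that the sole failures are $(q,m) = (2,1)$ (where $q^m - 1 = 1$), $(q,m) = (3,1)$ (where $q^m - 1 = 2$ but $e(2,3) = 2$ by convention), and $(q,m) = (2,6)$ (where $2^6 - 1 = 63 = 3^2 \cdot 7$ and one checks $e(3,2) = 2$, $e(7,2) = 3$).

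The main obstacle is the careful bookkeeping in the second and fourth steps: controlling the $2$-adic valuation in the presence of the nonstandard convention for $e(2,q)$, and pushing the elementary inequality $(q-1)^{\varphi(m)} > m$ until only the three exceptional pairs remain. Everything else is standard manipulation of cyclotomic polynomials.
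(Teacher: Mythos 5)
The paper itself offers no proof of this lemma --- it is quoted directly from Bang and Zsigmondy --- so there is no internal argument to compare with; I can only assess your outline on its own terms. Steps 1--3 are essentially sound: a prime $r\nmid q$ dividing $\Phi_m(q)$ is either primitive or divides $m$, and in the latter case $r$ is forced to be the largest prime factor $P$ of $m$ (because $\operatorname{ord}_r(q)\mid r-1$ while $m/\operatorname{ord}_r(q)$ is a power of $r$) and, for $m>2$, satisfies $v_r(\Phi_m(q))\le 1$; hence $\Phi_m(q)\le P\le m$ whenever no primitive prime divisor exists. The bookkeeping for $m\in\{1,2\}$ under the paper's convention for $e(2,\cdot)$ also works out as you describe.

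The genuine gap is in your final step. The estimate $\Phi_m(q)\ge (q-1)^{\varphi(m)}$ collapses to $\Phi_m(2)\ge 1$ when $q=2$, so the inequality $(q-1)^{\varphi(m)}>m$ holds for \emph{no} value of $m$ when $q=2$, and your claim that only finitely many pairs remain to inspect is false: every pair $(2,m)$ survives. This is not a peripheral issue --- $q=2$ is precisely the delicate case of the theorem and the source of the exception $(2,6)$. To close the argument you need a genuinely sharper lower bound at $q=2$, for instance
$$\Phi_m(2)=\prod_{d\mid m}\bigl(2^d-1\bigr)^{\mu(m/d)}\ \ge\ 2^{\varphi(m)}\prod_{d\ge 1}\bigl(1-2^{-d}\bigr)\ >\ 2^{\varphi(m)-2},$$
or the classical estimate obtained by pairing each primitive root $\zeta$ with $\overline{\zeta}$ and bounding $|2-\zeta|^2=5-4\cos(2\pi k/m)$ from below. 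Either bound beats $m$ for all but finitely many $m$, after which a direct check (together with the analogous, genuinely finite, check for $q\ge 3$, where $2^{\varphi(m)}>m$ fails only for small $m$) isolates the three stated exceptions. Until such a bound is supplied, the proof does not go through.
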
	

Fix an integer $a$ with $|a|>1$. A prime $r$ is said to be a primitive prime divisor of $a^i-1$ if $e(r,a)=i$. We write $r_i(a)$ to denote some primitive prime divisor of $a^i-1$ if such a prime exists, and $R_i(a)$ to denote the set of all such divisors.
	
\begin{lemm}[{\rm \cite[Lemma~6]{GrechLyt}}]\label{calc}
Let $q$, $m$, and $k$ be positive integers. Then $R_{mk}(q)\subseteq R_m(q^k)$. If, in addition, $(m,k)=1$, then $R_m(q)\subset R_m(q^k)$.
\end{lemm}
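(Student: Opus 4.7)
The plan is to reduce both assertions to the standard identity
\[
e(r, q^k) = \frac{e(r,q)}{\gcd(e(r,q),\, k)},
\]
valid for any prime $r$ coprime to $q$. This identity follows at once from the fact that in any cyclic group the order of the $k$-th power of an element of order $n$ equals $n/\gcd(n,k)$; applied in $(\mathbb{Z}/r\mathbb{Z})^{\times}$, it relates the order of $q^k$ modulo $r$ to the order of $q$ modulo $r$.

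Granted this identity, the inclusion $R_{mk}(q)\subseteq R_m(q^k)$ is immediate: for $r\in R_{mk}(q)$ we have $e(r,q)=mk$, hence $e(r,q^k)=mk/\gcd(mk,k)=mk/k=m$, so $r\in R_m(q^k)$. The inclusion $R_m(q)\subseteq R_m(q^k)$ under the extra hypothesis $(m,k)=1$ is obtained in the same way: if $e(r,q)=m$, then $e(r,q^k)=m/\gcd(m,k)=m$.

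For the strict part (assuming $k\ge 2$) I would appeal to Lemma~\ref{zsigm} to produce a prime $r\in R_{mk}(q)$. By the inclusion already established, $r\in R_m(q^k)$, whereas $e(r,q)=mk\ne m$ forbids $r\in R_m(q)$, giving the strict containment. The main obstacle is the Bang--Zsigmondy exceptional triples $(q,mk)\in\{(2,1),(3,1),(2,6)\}$: the cases $mk=1$ force $k=1$ and make the claim vacuous, while $q=2$, $mk=6$ together with $(m,k)=1$ and $k\ge 2$ leaves only the pairs $(m,k)\in\{(1,6),(2,3),(3,2)\}$, each of which can be verified directly from the factorizations of $2^i-1$ for $i\le 6$.
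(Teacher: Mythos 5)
The paper does not prove this lemma at all --- it is imported verbatim from \cite[Lemma~6]{GrechLyt} --- so there is no in-paper argument to compare against. Your derivation of the two containments from the identity $e(r,q^k)=e(r,q)/\gcd(e(r,q),k)$ is correct and is the standard route. One small caveat: with the paper's convention $e(2,n)\in\{1,2\}$ (defined via the residue of $n$ modulo $4$ rather than as an order modulo $2$), the justification ``order of a $k$-th power in $(\mathbb{Z}/r\mathbb{Z})^{\times}$'' does not literally apply to $r=2$; the identity still holds for $r=2$, but the relevant group is $(\mathbb{Z}/4\mathbb{Z})^{\times}$, and this case should be checked separately (it is immediate).

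The last paragraph, however, contains a genuine error. You read $\subset$ as strict containment and claim that the Bang--Zsigmondy exceptional cases with $q=2$, $mk=6$ ``can be verified directly''; in fact two of the three fail. For $(m,k)=(2,3)$ one has $R_2(2)=\{3\}=R_2(8)$, and for $(m,k)=(3,2)$ one has $R_3(2)=\{7\}=R_3(4)$, so strict containment is simply false there (and also for $k=1$). The symbol $\subset$ in this lemma must therefore be read as ordinary (non-strict) inclusion --- which is consistent with how the paper applies it, e.g.\ when it deduces $R_{24}(u^{1/s})\subseteq R_{24}(u)$. The fix is to delete the strictness paragraph entirely: your second paragraph already proves everything the lemma asserts and everything the paper uses. (Where the paper does need non-emptiness of some $R_i$, it invokes Lemma~\ref{zsigm} separately.)
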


\begin{lemm}\label{form}
If $r\in R_i(q)$, then $r=ik+1$, where $k$ is a positive integer.
\end{lemm}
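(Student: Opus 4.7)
The plan is to recognize that this statement is essentially Fermat's Little Theorem packaged together with the fact that the multiplicative order of an element divides every exponent annihilating it. By the definition of $R_i(q)$ given just above, the hypothesis $r \in R_i(q)$ says precisely that $e(r,q) = i$, i.e.\ $i$ is the multiplicative order of $q$ modulo $r$; in particular $r \nmid q$, so $q$ represents a well-defined unit in $\mathbb{Z}/r\mathbb{Z}$.

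The execution is then a one-liner. Fermat's Little Theorem gives $q^{r-1} \equiv 1 \pmod{r}$, and since $i$ is the multiplicative order of $q$ modulo $r$, the standard fact that the order divides any annihilating exponent forces $i \mid r - 1$. Writing $r - 1 = ik$ yields the desired identity $r = ik + 1$; the integer $k$ is automatically positive, because the alternative $k = 0$ would give $r = 1$, contradicting the primality of $r$.

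No real obstacle presents itself here: the statement is a textbook corollary of Fermat's Little Theorem and is standard in the literature on primitive prime divisors. The only minor technical point worth flagging is that when $r = 2$ the paper adopts a special definition of $e(2,q)$, but the lemma is invoked in the sequel only for odd primes $r$, so this subtlety does not affect the applications. In short, the proof amounts to citing Fermat's Little Theorem and rearranging.
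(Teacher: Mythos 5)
Your proof is correct and takes the same route as the paper, which simply cites Fermat's little theorem; you have merely spelled out the standard order-divides-exponent argument that the paper leaves implicit. Your side remark about the special convention for $e(2,q)$ is a reasonable caveat but does not change the substance.
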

\begin{proof} This is a consequence of Fermat's little theorem.
\end{proof}

Given a positive integer $i\neq2$, denote by $k_i(a)$ the product of all primitive prime divisors of $a^i-1$
with multiplicities counted. In case $i=2$ put $k_2(a) = k_1(-a)$. It is not difficult to verify that if $i$ is
divisible by 4 then $k_i(a)=k_i(-a)$ and if $i$ is odd then $k_i(a) = k_{2i}(-a)$. The following general formula~\cite{rotman} expresses $k_i(a)$, where $i>2$, in terms of cyclotomic polynomials:
\begin{equation}\label{eq:k_i}
k_i(a)=\frac{\Phi_i(a)}{(r, \Phi_{(i)_{r'}}(a))},
\end{equation} where $r$ is the greatest prime divisor of $i$.

\smallskip

The following assertion is well-known and its proof is elementary.
\begin{lemm}\label{zsigdiv}
Suppose that $q>1$ is an integer.
For a positive integer $i$, an odd prime $r$ divides $q^i-1$ if and only if $e(r,q)$ divides $i$.
\end{lemm}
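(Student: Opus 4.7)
The plan is to unpack the definition of multiplicative order and reduce the claim to the standard fact that in the cyclic group $(\mathbb{Z}/r\mathbb{Z})^{\ast}$, an element satisfies $x^i = 1$ if and only if the order of $x$ divides $i$.

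First I would handle the forward direction. Assume the odd prime $r$ divides $q^i-1$. Then $r$ cannot divide $q$, for otherwise $r$ would divide $q^i - (q^i-1)=1$, a contradiction; hence $(r,q)=1$ and $e(r,q)$ is defined. The congruence $q^i\equiv 1\pmod{r}$ means that $i$ is an exponent annihilating $q$ in $(\mathbb{Z}/r\mathbb{Z})^{\ast}$, so by the standard Euclidean-division argument (write $i = e(r,q)\cdot s + t$ with $0\le t<e(r,q)$ and deduce $q^t\equiv 1\pmod r$, forcing $t=0$ by minimality of $e(r,q)$) we conclude that $e(r,q)\mid i$.

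For the converse, assume $e(r,q)\mid i$, so in particular $(r,q)=1$. Writing $i = e(r,q)\cdot k$ for some positive integer $k$, we obtain
\[
q^i - 1 = \bigl(q^{e(r,q)}\bigr)^{k} - 1 \equiv 1^{k} - 1 \equiv 0 \pmod{r},
\]
which is exactly $r\mid q^i-1$.

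There is no real obstacle here: the statement is a direct consequence of the definition of $e(r,q)$ as the multiplicative order of $q$ modulo $r$ together with elementary properties of orders in finite cyclic groups. The only minor subtlety is to verify coprimality of $r$ and $q$ in the forward direction before invoking $e(r,q)$, which is immediate as shown above.
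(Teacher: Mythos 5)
Your proof is correct and is exactly the standard elementary argument the paper has in mind (the paper simply states the lemma as ``well-known'' and omits the proof). The Euclidean-division argument for the forward direction and the coprimality check before invoking $e(r,q)$ are both handled properly, so there is nothing to add.
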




The following lemma is a particular case of the well-known Nagell--Ljunggren equation.
\begin{lemm}[{\rm \cite{Na20}}]\label{nagell} Suppose that $x$, $y$, and $k$ are positive integers. If $x^2+x+1=y^k$, then either $k=1$ or $k=3$, $x=18$, and $y=7$. \end{lemm}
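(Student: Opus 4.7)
The plan is to split into cases by the value of $k$. For $k=1$ take $y=x^2+x+1$, which works trivially. For $k=2$, observe that for every positive integer $x$ one has $x^2 < x^2+x+1 < (x+1)^2$, so $x^2+x+1$ lies strictly between consecutive squares and cannot be a perfect square.

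For $k\ge 3$ I would work in the ring of Eisenstein integers $\mathbb{Z}[\omega]$, where $\omega$ is a primitive cube root of unity, exploiting the factorization $x^2+x+1=(x-\omega)(x-\omega^2)$. First, I would rule out the ramified prime $3$: if $3\mid y$, then $3\mid x^2+x+1$ forces $x\equiv 1\pmod 3$, and writing $x=1+3t$ gives $x^2+x+1=3(1+3t+3t^2)$, whose $3$-adic valuation is exactly $1$, contradicting $v_3(y^k)\ge k\ge 2$. Hence $3\nmid y$, which implies that $x-\omega$ and $x-\omega^2$ are coprime in $\mathbb{Z}[\omega]$ (any common divisor would divide $\omega-\omega^2=\sqrt{-3}$, a prime above $3$).

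Since $\mathbb{Z}[\omega]$ is a PID with finite unit group $\{\pm1,\pm\omega,\pm\omega^2\}$, unique factorization yields $x-\omega=\eta(a+b\omega)^k$ for some unit $\eta$ and some $a,b\in\mathbb{Z}$. Expanding via $\omega^2=-1-\omega$ and comparing coefficients in the basis $\{1,\omega\}$ produces two polynomial identities in $a,b$: the constant coefficient must equal $x$, while the coefficient of $\omega$ must equal $-1$. The latter is the decisive constraint, forcing a homogeneous form of degree $k$ in $(a,b)$ to take a value of absolute value $1$. Running through the six possibilities for $\eta$ and doing the binomial expansion, I expect to see that the only surviving case is $k=3$, yielding $x=18$, $y=7$ from small integer values of $(a,b)$.

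The main obstacle is the uniform elimination of the exponents $k\ge 4$; here one must combine the binomial identity above with a descent step, typically by extracting a smaller solution from $(a,b)$, or with congruence conditions modulo primes chosen so that $k$-th power residues are scarce. This is essentially the content of Nagell's original 1920 argument cited as~\cite{Na20}, and I would follow his proof rather than redevelop it from scratch.
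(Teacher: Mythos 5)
Your proposal is not a proof: everything up to the decisive step is set up correctly, but the decisive step itself is left to the very reference the lemma already cites. Concretely, what you actually establish is: the case $k=1$ is trivial; $x^2<x^2+x+1<(x+1)^2$ rules out $k=2$ (and hence, as you could note explicitly, every even $k$, since $y^{2m}$ is a square); the $3$-adic valuation of $x^2+x+1$ is at most $1$, so $3\nmid y$ for $k\ge2$; and therefore $x-\omega$ and $x-\omega^2$ are coprime in $\mathbb{Z}[\omega]$, giving $x-\omega=\eta(a+b\omega)^k$. All of this is correct. But the content of the lemma lives entirely in what comes next: showing that for $k=3$ the resulting cubic Thue-type equation (coefficient of $\omega$ equal to $-1$) has only the solutions leading to $x\in\{0,18,-19\}$, and showing that for odd primes $k\ge5$ there are no solutions at all. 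You describe this as ``the main obstacle'' and announce that you ``would follow his proof rather than redevelop it from scratch.'' That is precisely the gap: the homogeneous form of degree $k$ taking the value $\pm1$ is not something that falls out of ``running through the six possibilities for $\eta$''; for $k=3$ it requires solving a genuine Thue equation, and for $k\ge5$ it is the hard half of Nagell's 1920 paper (and historically the part that needed the most care). A congruence-plus-descent handwave does not substitute for it.

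For fairness of comparison: the paper itself gives no proof of this lemma at all --- it is stated with the attribution to Nagell and used as a black box (the paper even supplies an independent elementary proof of the weaker Lemma~\ref{restrictions}, which is all that is needed in one place). So your completed reductions already exceed what the paper records, and citing Nagell for the remaining cases is exactly what the paper does. If your intent was to reprove the lemma from scratch, the argument is incomplete at its core; if your intent was to reduce it transparently to Nagell's theorem, you should say so up front rather than present the appeal to the reference as the final step of a proof.
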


The following technical lemma follows from~Lemma~\ref{nagell}, but we give an independent proof. This statement will be needed in the proof of Main Theorem for the group $G_2(q)$.
\begin{lemm}\label{restrictions} Suppose that $q$ is an integer greater than $2$ and $\pi(q^2+\varepsilon{q}+1)=\{r\}$, where $\varepsilon\in\{+,-\}$ and $r$ is a prime. Then $r\equiv1\pmod{6}$. Moreover, if $q\equiv1\pmod8$ then either $r\equiv1\pmod{8}$ or $r\equiv3\pmod{8}$.
\end{lemm}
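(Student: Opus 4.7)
The plan is to set $N := q^2 + \varepsilon q + 1$ and write $N = r^k$ with $k \geq 1$, then establish the two assertions separately: $r \equiv 1 \pmod{6}$ unconditionally, and the stronger residue modulo $8$ under the hypothesis $q \equiv 1 \pmod{8}$.

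First I would eliminate $r = 2$ and $r = 3$. Since $N = q(q + \varepsilon) + 1$ is one more than a product of two consecutive integers, $N$ is always odd, so $r \neq 2$. For $r = 3$, a congruence check modulo $3$ forces $q \equiv 1 \pmod 3$ when $\varepsilon = +$, or $q \equiv 2 \pmod 3$ when $\varepsilon = -$. In either subcase, substituting $q = 3m + 1$ or $q = 3m + 2$ with $m \geq 1$ gives $N = 3(3m^2 + 3m + 1)$ where the second factor is $\equiv 1 \pmod 3$ and strictly greater than $1$. Hence $N$ has a prime divisor distinct from $3$, contradicting $\pi(N) = \{r\} = \{3\}$.

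Next I would show $3 \mid r - 1$ (which together with $r$ odd and $r \neq 3$ yields $r \equiv 1 \pmod 6$) by computing the multiplicative order $d$ of $q$ modulo $r$. For $\varepsilon = +$, the identity $q^3 - 1 = (q - 1) N$ gives $d \mid 3$; the option $d = 1$ would imply $q \equiv 1 \pmod r$, hence $N \equiv 3 \pmod r$, forcing $r = 3$, already excluded, so $d = 3$. For $\varepsilon = -$, the chain $N \mid q^3 + 1 \mid q^6 - 1$ gives $d \mid 6$, and I would rule out $d \in \{1, 2, 3\}$ one by one: $d = 1$ gives $N \equiv 1 \pmod r$; $d = 2$ gives $q \equiv -1 \pmod r$ and $N \equiv 3 \pmod r$, forcing $r = 3$; $d = 3$ forces $r \mid \gcd(q^3 - 1, q^3 + 1) \mid 2$, impossible for odd $r$. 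Hence $d = 6$ and $6 \mid r - 1$.

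For the moreover clause, fix $q \equiv 1 \pmod 8$. A direct computation yields $N \equiv 3 \pmod 8$ when $\varepsilon = +$ and $N \equiv 1 \pmod 8$ when $\varepsilon = -$. The crux is then to prove $k$ is odd: the sandwich $q^2 < N < (q + 1)^2$ for $\varepsilon = +$ and $(q - 1)^2 < N < q^2$ for $\varepsilon = -$ places $N$ strictly between consecutive squares, so $N$ is not a perfect square and $k$ cannot be even. Since $(\mathbb{Z}/8)^\times$ has exponent $2$, odd $k$ forces $r \equiv r^k = N \pmod 8$, giving $r \equiv 3 \pmod 8$ if $\varepsilon = +$ and $r \equiv 1 \pmod 8$ if $\varepsilon = -$, which yields the claimed dichotomy. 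I do not foresee any serious obstacle; the only delicate bookkeeping is the case analysis for the order $d$ in the $\varepsilon = -$ branch, which is routine.
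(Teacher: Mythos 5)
Your proposal is correct and follows essentially the same route as the paper: exclude $r=2$ and $r=3$, deduce $r\equiv1\pmod6$ from the fact that the multiplicative order of $\varepsilon q$ modulo $r$ is $3$ (the paper phrases this as $r\in R_3(\varepsilon q)$ together with Fermat's little theorem), and for the mod-$8$ claim use exactly the same sandwich between consecutive squares to force the exponent to be odd, whence $r\equiv r^k=N\pmod8$. The only cosmetic difference is that you unpack the primitive-prime-divisor machinery into a direct order computation and exhibit an explicit cofactor to rule out $r=3$ where the paper notes that $9\nmid N$; both are sound.
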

\begin{proof}
By assumption, there exists a positive integer $n$ such that $q^2+\varepsilon{q}+1=r^n$.
Clearly, $r$ is odd. Note that $q^2+\varepsilon{q}+1$ is not divisible by 9, so if $r=3$, then $n=1$ and $q\in\{1,2\}$. If $r\neq3$, then $r$ divides $k_3(\varepsilon{q})=\frac{q^2+\varepsilon{q}+1}{(q-\varepsilon1,3)}$ and hence $r\in R_3(\varepsilon{q})$. It follows from Lemma~\ref{form} that $r\equiv1\pmod{6}$.

Suppose that $q\equiv1\pmod{8}$. Then either $q^2+\varepsilon{q}+1\equiv1\pmod8$ or
$q^2+\varepsilon{q}+1\equiv3\pmod8$. Therefore, if $r\equiv5\pmod{8}$ or $r\equiv7\pmod{8}$, then
$n$ is even. On the other hand, $(q-1)<q^2-q+1<q^2$ and $q^2<q^2+q+1<(q+1)^2$, so $n$ cannot be even.
This implies that either $r\equiv1\pmod{8}$ or $r\equiv3\pmod{8}$.
\end{proof}

Let $G$ be a finite group. Denote the number of connected components of $\Gamma(G)$
by $s(G)$, and the set of connected
components of $\Gamma(G)$ by $\{\pi_i(G) \mid 1 \leq i \leq s(G) \}$; for a group $G$
of even order, we assume that $2 \in \pi_1(G)$. Denote by $t(G)$ the \emph{independence number} of $\Gamma(G)$, that is the greatest size of a coclique  (i.\,e. induced subgraph with no edges) in $\Gamma(G)$. If $r\in\pi(G)$, then denote by $t(r,G)$ the greatest size of a coclique in $\Gamma(G)$ containing $r$.

\begin{lemm}\label{NormalSeriesAdj} Let $A$ and $B$ be normal subgroups of a group $G$ such that $A\le B$.
If $r, s \in \pi(B/A)\setminus (\pi(A) \cup \pi(G/B))$, then $r$ and $s$ are adjacent in $\Gamma(G)$ if and only if $r$ and $s$ are adjacent in $\Gamma(B/A)$.
\end{lemm}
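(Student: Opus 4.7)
The plan is to establish each direction of the biconditional separately, using only the definitions of $\omega(\cdot)$ and $\pi(\cdot)$ together with elementary facts about orders in quotient groups. For the forward direction, I would suppose $rs\in\omega(G)$ and pick $x\in G$ of order $rs$. Since the order of $xB\in G/B$ divides both $rs$ and $|G/B|$, the hypothesis $r,s\notin\pi(G/B)$ forces $xB=B$, hence $x\in B$. The order of $xA\in B/A$ therefore divides $rs$ and lies in $\{1,r,s,rs\}$; were it strictly less than $rs$, an appropriate power of $x$ with order divisible by $r$ or $s$ would belong to $A$, contradicting $r,s\notin\pi(A)$. Hence $|xA|=rs$, yielding $rs\in\omega(B/A)$.

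For the reverse direction, I would take $g\in B$ with $|gA|=rs$ and analyse $|g|$ in $B$. The relation $g^{rs}\in A$ forces $e:=|g^{rs}|$ to divide $|A|$, so by the hypothesis $r,s\notin\pi(A)$ we have $\gcd(e,rs)=1$. A short standard computation (comparing the order of $g$ with the orders of its image in $B/A$ and of $g^{rs}$ in $A$) then gives $|g|=rs\cdot e$, so $g^{e}$ has order exactly $rs$. Since $g^{e}\in B\le G$, this exhibits $rs\in\omega(G)$.

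I do not anticipate any serious obstacle: the lemma is purely formal, expressing that under the hypothesis on $r$ and $s$ the quotient maps $G\to G/B$ and $B\to B/A$ cannot alter the $\{r,s\}$-parts of element orders. The only care required is careful bookkeeping of which primes lie in which piece of the normal series $A\le B\le G$, and the distinctness $r\ne s$ (inherent in the Gruenberg--Kegel graph) to ensure that $\gcd(r,s)=1$ when extracting $r$- and $s$-parts of element orders.
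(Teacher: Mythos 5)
Your proof is correct and is exactly the elementary argument the paper has in mind (the authors simply write that the proof is elementary and omit the details). Both directions are handled properly: the hypothesis $r,s\notin\pi(G/B)$ pushes the element of order $rs$ into $B$, the hypothesis $r,s\notin\pi(A)$ prevents the order from dropping in $B/A$, and the order computation $|g|=rs\cdot e$ with $\gcd(e,rs)=1$ correctly lifts an element of order $rs$ from $B/A$ back to $B\le G$.
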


\begin{proof} The proof of this lemma is elementary.
\end{proof}

\begin{lemm}[{\rm \cite{Va05}}]\label{vas}
Let $G$ be a finite group with $t(G)\geq3$ and $t(2,G)\geq2$. Then the following statements hold.
	
$(1)$ There exists a nonabelian simple group $S$ such that $S \unlhd \overline{G} = G/K \le\operatorname{Aut}(S)$,  where $K$ is the solvable radical of $G$ {\rm(}i.\,e., the largest solvable normal subgroup of $G${\rm)}.
	
$(2)$ For every coclique $\rho$ of $\Gamma(G)$ of size at least three, at most one prime in $\rho$ divides the product $|K|\cdot|\overline{G}/S|$. In particular, $t(S)\geq t(G)-1$.
	
$(3)$ One of the following two conditions holds:	
	
$\mbox{ }$$\mbox{ }$$\mbox{ }$$(3.1)$ $S\cong A_7$ or $L_2(q)$ for some odd $q$, and $t(S)=t(2,S)=3$.
	
$\mbox{ }$$\mbox{ }$$\mbox{ }$$(3.2)$
Every prime $p\in\pi(G)$ nonadjacent to $2$ in $\Gamma(G)$ does not divide the product $|K|\cdot|\overline{G}/S|$. In particular, $t(2,S)\geq t(2,G)$.
\end{lemm}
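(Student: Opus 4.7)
The plan is to follow Vasil'ev's strategy. First, I would set up the standard reduction: let $K$ be the solvable radical of $G$ and write $\bar G = G/K$. Since $K$ is the largest solvable normal subgroup of $G$, we have $C_{\bar G}(\operatorname{Soc}(\bar G)) = 1$, so the socle $\operatorname{Soc}(\bar G) = S_1 \times \cdots \times S_m$ is a direct product of nonabelian simple groups, and $\bar G$ embeds into $\operatorname{Aut}(\operatorname{Soc}(\bar G))$.

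The next step is to prove $m = 1$. If $m \geq 2$, pick primes $p_i \in \pi(S_i)$; then $S_i \times S_j$ contains an element of order $p_i p_j$, which lifts to an element of $G$ of order divisible by $p_i p_j$, so any two primes from different factors are adjacent in $\Gamma(G)$. Combining this with the fact that every nonabelian simple group has at least three prime divisors (Burnside's $p^{a}q^{b}$-theorem) and that within each $\pi(S_i)$ the prime $2$ is adjacent to some other prime, one shows that no coclique of size three can exist in $\Gamma(G)$, contradicting $t(G) \geq 3$. This yields (1) with $S := S_1$.

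For (2), suppose toward contradiction that some coclique $\rho$ with $|\rho| \geq 3$ contains two primes $p, q$ both dividing $|K| \cdot |\bar G/S|$. I would split into cases according to whether each of $p, q$ lies in $\pi(K)$ or $\pi(\bar G/S)$. In every case the goal is to produce an element of order $ps$ or $qs$ for some third prime $s \in \rho$, using the action of $S$ on appropriate sections of $K$, Hall subgroup arguments inside $K$, and the solvability of $\operatorname{Out}(S)$ guaranteed by the Schreier conjecture. This contradicts the coclique property and proves the first assertion of (2); the bound $t(S) \geq t(G) - 1$ then follows by restricting any coclique of $\Gamma(G)$ to $\pi(S)$.

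For (3), assume that some prime $p \in \pi(K) \cup \pi(\bar G/S)$ is nonadjacent to $2$ in $\Gamma(G)$. The task is to show this forces $S \cong A_7$ or $S \cong L_2(q)$ for some odd $q$, with $t(S) = t(2,S) = 3$. Using CFSG together with the Williams--Kondratiev tables of connected components, one enumerates the candidates for $S$ with $t(S) \geq 2$ and rules out each in turn by comparing the possible primes in $\operatorname{Out}(S)$, or in a nontrivial $S$-invariant section of $K$, against the list of primes nonadjacent to $2$ in $\Gamma(S)$. The principal obstacle is exactly this case analysis: it requires rather detailed knowledge of the spectra of almost simple groups and of their outer automorphism groups in order to verify that only the two families named in (3.1) survive, with (3.2) holding otherwise.
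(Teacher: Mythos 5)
This lemma is not proved in the paper: it is quoted from Vasil'ev~\cite{Va05} and used as a black box, so there is no internal proof to compare your attempt with. Judged on its own, your outline has the right global architecture (solvable radical, socle, reduction to one simple factor, coclique counting, CFSG for part~(3)), but the one step you argue in any detail is wrong. You claim that if $\operatorname{Soc}(\overline{G})=S_1\times\cdots\times S_m$ with $m\ge 2$, then no coclique of size three exists, and you support this partly by asserting that within each $\pi(S_i)$ the prime $2$ is adjacent to some other prime. That assertion is false ($2$ is an isolated vertex of $\Gamma(A_5)$, for instance), and the conclusion is false as well: the group $G=A_5\times PSL_2(169)$ has trivial solvable radical, a socle with two simple factors, and the coclique $\{7,13,17\}$, so $t(G)\ge 3$. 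What fails in this example is the other hypothesis, $t(2,G)\ge 2$: when $m\ge 2$, every odd prime of $\pi(\operatorname{Soc}(\overline{G}))$ is adjacent to $2$ via a cross-factor element of order $2r$, so any prime witnessing $t(2,G)\ge 2$ must divide $|K|\cdot|\overline{G}/\operatorname{Soc}(\overline{G})|$, and the contradiction for $m\ge2$ has to be extracted from that. Since your reduction to $m=1$ never invokes $t(2,G)\ge 2$, it cannot be repaired as stated.

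Parts (2) and (3) are only plans: you name the ingredients (Hall subgroups of $K$, actions of $S$ on sections of $K$, the Schreier conjecture, the Williams--Kondrat'ev tables) but supply none of the arguments, and the entire difficulty of Vasil'ev's theorem lives exactly there. For (2), the essential point is that if two primes $p,q$ of a coclique both divide $|K|$, then a Hall $\{p,q\}$-subgroup of $K$ has disconnected prime graph, hence is Frobenius or $2$-Frobenius, and this structure must then be played against an element of the third coclique prime acting on it via fixed-point-free/Hall--Higman type arguments; the mixed cases ($p\in\pi(K)$, $q\in\pi(\overline{G}/S)$, etc.) each need their own treatment. None of this is present. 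As it stands the proposal does not establish the lemma; since the result is external to the paper, the correct move here is simply to cite \cite{Va05}, as the authors do.
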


\begin{lemm}[{\rm \cite[Lemma 1]{Staroletov2}}]\label{semid}
Let $N\unlhd G$ be an elementary abelian subgroup and $H = G/N$. Define a homomorphism $\phi: H\rightarrow\operatorname{Aut}(N)$ as follows $n^{\phi(gN)}=n^g$. Then $\Gamma(G)=\Gamma(N\rtimes_{\phi}H)$.	
\end{lemm}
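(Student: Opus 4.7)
Since $|G|=|\tilde{G}|$, the vertex sets coincide, so it suffices to show that each pair of distinct primes $r, s \in \pi(G)$ is adjacent in $\Gamma(G)$ if and only if it is adjacent in $\Gamma(\tilde{G}) = \Gamma(N\rtimes_\phi H)$. Let $p$ be the prime with $N$ elementary abelian of exponent $p$ (the case $N=1$ is trivial). My plan is to split into the two cases $p \notin \{r,s\}$ and $r=p$, $s \neq p$, and in each case to characterize adjacency of $r,s$ purely in terms of the pair $(H,\phi)$, which forces $\Gamma(G)=\Gamma(\tilde{G})$.

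In the first case $\gcd(rs,|N|)=1$, and I would invoke Schur--Zassenhaus: any cyclic subgroup of $G$ of order $rs$ intersects $N$ trivially and so maps isomorphically into $H$, while any cyclic subgroup of $H$ of order $rs$ admits a cyclic complement in its preimage in $G$ (whose kernel $N$ has order coprime to $rs$), producing an element of order $rs$ in $G$. Hence $rs \in \omega(G) \Leftrightarrow rs \in \omega(H)$, and the same equivalence holds for $\tilde{G}$. In the second case, I would prove the key claim that, in either of $G$ and $\tilde{G}$, one has $ps \in \omega$ if and only if either $ps \in \omega(H)$, or there exists $\bar{v} \in H$ of order $s$ with nonzero fixed subspace $C_N(\bar{v})$. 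For the forward direction in $G$, factor an element of order $ps$ as $uv=vu$ with $|u|=p$ and $|v|=s$: an inspection of orders under the projection $G\to H$ gives $|\bar{v}|=s$, and then either $u \in N \setminus \{1\}$, placing $u$ in $C_N(\bar{v})$, or $\bar{u}$ has order $p$, so that $\bar{u}\bar{v}$ has order $ps$ in $H$. The reverse direction lifts either an element of $H$ of order $ps$ (which lifts to $z \in G$ with $|z| \in \{ps, p^2 s\}$, since $z^{ps}\in N$ has order dividing $p$, and then $z$ or $z^p$ has order $ps$) or an $s$-element $\bar{v}$ fixing some $0 \neq u \in N$ (lift $\bar{v}$ to $v \in G$ of order $s$ via Schur--Zassenhaus; since conjugation by $v$ on $N$ acts as $\phi(\bar{v})$ and fixes $u$, the elements commute and $uv$ has order $ps$). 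The identical characterization in $\tilde{G}$ follows by a direct computation with the semidirect product law $(n,h)(n',h') = (n+\phi(h)n', hh')$; since both criteria refer only to $H$ and $\phi$, the adjacencies in $\Gamma(G)$ and $\Gamma(\tilde{G})$ agree.

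The delicate point is the second case: $G$ and $\tilde{G}$ may differ as extensions by a nontrivial 2-cocycle, and one must ensure that the existence of an element of order $ps$ is cocycle-independent. This is precisely what the reduction to the two conditions ``$ps \in \omega(H)$'' and ``$C_N(\bar{v}) \neq 1$'' achieves; the hypothesis that $N$ has exponent $p$ keeps the possible orders of lifts confined to the finite set $\{ps, p^2 s\}$, so that passing to a suitable $p$-th power always recovers order $ps$ in $G$.
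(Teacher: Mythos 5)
Your proof is correct. The paper itself gives no argument for this lemma --- it is quoted from \cite[Lemma~1]{Staroletov2} --- and your write-up is essentially the standard proof given there: the coprime case is handled by Schur--Zassenhaus in both directions, and the case $r=p$ is reduced to the cocycle-independent criterion ``$ps\in\omega(H)$ or some $\bar v\in H$ of order $s$ has $C_N(\bar v)\neq 1$,'' with the elementary-abelian hypothesis confining lifted orders to $\{ps,p^2s\}$. All the individual steps (well-definedness of the order of $\bar v$, the commuting factorization $x=uv$, and the transfer of the criterion to $N\rtimes_\phi H$, which satisfies the same hypotheses with the same triple $(N,H,\phi)$) check out.
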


\begin{lemm}[{\rm \cite[Proposition~3.1]{CaMas}}]\label{NumSimple} Let $\pi$ be a finite set of primes. The number of pairwise nonisomorphic nonabelian simple groups $S$ with $\pi(S)\subseteq\pi$ is finite, and is at most $O(|\pi|^3)$.
\end{lemm}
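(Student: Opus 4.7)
The plan is to invoke the Classification of Finite Simple Groups and bound each of the three families -- alternating, sporadic, and Lie type -- separately, using Bang--Zsigmondy (Lemma~\ref{zsigm}) throughout to convert cyclotomic arithmetic into prime counts. The $26$ sporadic groups contribute $O(1)$. For an alternating group $A_n$ with $n\geq 5$, the condition $\pi(A_n)\subseteq\pi$ forces every prime $p\leq n$ to lie in $\pi$, since all such primes divide $|A_n|=n!/2$; hence $n$ is strictly less than the smallest prime outside $\pi$, and since $\pi$ must contain all primes up to $n$, the number of admissible $n$ is $O(|\pi|)$.

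The bulk of the work is the Lie type case. For $G(q)$ with $q=p^f$, I would bound the type (together with its rank), the characteristic $p$, and the exponent $f$ each by $O(|\pi|)$, so that the product is $O(|\pi|^3)$. The characteristic $p$ divides $|G(q)|$, so $p\in\pi$, giving at most $|\pi|$ choices. For the rank, the order of any classical group of rank $r$ is divisible by $\Phi_d(q)$ for $d$ ranging in an initial segment of length $\Omega(r)$; by Lemma~\ref{zsigm}, each such $d$ outside the finitely many Zsigmondy exceptions contributes a distinct primitive prime $r_d(q)\in\pi$, forcing $r=O(|\pi|)$. Since there are only a constant number of classical families and only finitely many exceptional types of each bounded rank, the type-rank pairs number $O(|\pi|)$. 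For $f$, fix the type and $p$ and select any cyclotomic index $d$ occurring in $|G(q)|$ for which $(q,d)$ is not a Bang--Zsigmondy exception (the choice $d=3$ works for classical groups of rank $\geq 2$ and for exceptional groups). Every admissible $f$ produces a prime $r\in R_d(q)\subseteq\pi$ with $e(r,p)=df$; distinct values of $f$ give distinct values of $e(r,p)$ and hence occupy distinct primes of $\pi$, bounding the number of admissible $f$ by $|\pi|$. Combining the three factors yields $O(|\pi|^3)$, and the alternating and sporadic counts are absorbed.

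The main obstacle is the careful handling of the Bang--Zsigmondy exceptions $(q,d)\in\{(2,1),(3,1),(2,6)\}$, the small-rank series such as $\mathrm{PSL}_2(q)$, ${^2}B_2(q)$, and ${^2}G_2(q)$ where the $d=3$ argument must be replaced by ad hoc indices like $d\in\{1,2,4\}$ (together with the very small exceptional groups of Lie type, where Zsigmondy primes may genuinely be missing for one or two low indices), and the finitely many sporadic isomorphisms among distinct Lie-type labels such as $B_n(2^f)\cong C_n(2^f)$; each such pathology is checked by hand and absorbed into the constant in the $O(\cdot)$, so none of them disturbs the asymptotic cubic bound.
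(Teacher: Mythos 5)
The paper does not prove this statement at all: it is imported verbatim as \cite[Proposition~3.1]{CaMas}, so there is no internal proof to compare against. Your proposal is essentially the standard CFSG-plus-Zsigmondy argument (and, as far as the decomposition into sporadic, alternating, and Lie-type counts with the three factors type/rank, characteristic, and field exponent goes, it is the same strategy used in the cited source), and its outline is sound. Two quantitative points deserve tightening. First, in the bound on $f$: a prime $r\in R_d(q)$ with $q=p^f$ satisfies $e(r,p)\mid df$ but need not satisfy $e(r,p)=df$ (indeed $e(r,q)=e(r,p)/\gcd(e(r,p),f)$), so as stated distinct $f$ need not occupy distinct primes. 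The fix is to run the argument the other way: take $r\in R_{df}(p)$, which exists by Lemma~\ref{zsigm} outside the listed exceptions, and use Lemma~\ref{calc} to conclude $R_{df}(p)\subseteq R_d(p^f)=R_d(q)\subseteq\pi(S)\subseteq\pi$; now $e(r,p)=df$ holds by construction and distinct $f$ give distinct primes. Second, the alternating count is not $O(|\pi|)$ but $O(|\pi|\log|\pi|)$: the condition is that all primes up to $n$ lie in $\pi$, so $n$ is bounded by the $(|\pi|+1)$-th prime, which grows like $|\pi|\log|\pi|$. Neither correction disturbs the final $O(|\pi|^3)$ bound, and your handling of the Bang--Zsigmondy exceptions and low-rank series is appropriately flagged, so I would call these fixable imprecisions rather than gaps.
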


Let $S$ be a finite simple group of Lie type in characteristic $p$. Let $A$ be any abelian $p$-group with an $S$-action. Any element $s \in S$ is said to be {\it unisingular} on $A$ if $s$ has a (nonzero) fixed point on $A$. The group $S$ is said to be {\it unisingular} if every element $s \in S$ acts unisingularly on every finite abelian $p$-group $A$ with an $S$-action.
Denote by $PSL^{\varepsilon}_n(q)$, where $\varepsilon \in\{+,-\}$, the group $PSL_n(q)$ if $\varepsilon=1$ and
$PSU_n(q)$ if $\varepsilon=-1$. Similarly, $E^{\varepsilon}_6(q)$ denotes the simple group $E_6(q)$ if $\varepsilon=1$ and ${}^2E_6(q)$ if $\varepsilon=-1$.

\begin{lemm}[{\rm\cite[Theorem 1.3]{GuTi03}}]\label{Unisingular} A finite simple group $S$ of Lie type of characteristic $p$ is unisingular if
and only if $S$ is one of the following{\rm:}

$(i)$ $PSL_n^\varepsilon(p)$ with $\varepsilon \in \{+,-\}$ and $n$ divides  $p-\varepsilon 1${\rm;}

$(ii)$ $P\Omega_{2n+1}(p)$, $PSp_{2n}(p)$ with $p$ odd{\rm;}

$(iii)$ $P\Omega_{2n}^\varepsilon (p)$ with $\varepsilon \in \{+,-\}$, $p$ odd, and $\varepsilon =(-1)^{n(p-1)/2}${\rm;}

$(iv)$ ${^2}G_2(q)$, $F_4(q)$, ${^2}F_4(q)$, $E_8(q)$ with $q$ arbitrary{\rm;}

$(v)$ $G_2(q)$ with $q$ odd{\rm;}

$(vi)$ $E_6^\varepsilon(p)$ with $\varepsilon \in \{+,-\}$ and $3$ divides $p-\varepsilon 1${\rm;}

$(vii)$ $E_7(p)$ with $p$ odd.
\end{lemm}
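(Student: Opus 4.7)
The plan is to reduce unisingularity to a statement in modular representation theory and then classify simple groups of Lie type satisfying the resulting combinatorial criterion. If $A$ is a finite abelian $p$-group with $S$-action and $s \in S$, then filtering $A$ through an $S$-chief series and using that the characteristic polynomial of $s$ on an $\bar{\mathbb{F}}_p S$-module is multiplicative across composition factors shows that $A^s \neq 0$ for every such $A$ if and only if, for every irreducible $\bar{\mathbb{F}}_p S$-module $L$, the element $s$ has $1$ as an eigenvalue on $L$. Thus $S$ is unisingular iff this holds for every pair $(s, L)$.

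I would then invoke the Steinberg tensor product theorem to reduce to restricted irreducibles $L(\lambda)$, since a tensor of $1$-eigenvalues is again a $1$-eigenvalue. Unipotent elements always fix a vector on any module, so only $p'$-elements need be treated. Each such semisimple $s$ lies in some maximal torus $T$, and the $s$-eigenvalues on $L(\lambda)$ are precisely the characters $\mu(s)$ as $\mu$ ranges over the weights of $L(\lambda)$. The question thereby becomes: for which simple groups $S$ is it true that, for every pair $(T \ni s,\ \lambda)$ with $\lambda$ a restricted dominant weight realizing a representation of $S$ (as opposed to merely of the simply connected cover), some weight $\mu$ of $L(\lambda)$ satisfies $\mu(s) = 1$?

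The main obstacle I expect is the case-by-case analysis. A clean sufficient criterion is that $\lambda$ lies in the root lattice: then the zero weight appears in $L(\lambda)$ and is fixed by every $s$. This handles the types where all $S$-irreducibles have such $\lambda$, explaining the unconditional appearance of $F_4(q)$, $E_8(q)$, ${^2}F_4(q)$, and ${^2}G_2(q)$, together with $G_2(q)$ in odd characteristic after a small extra check. For the remaining families, restricted $S$-representations can have highest weight outside the root lattice, and unisingularity translates to the finite-abelian condition that for every maximal torus $T$ the image of $T \cap S$ in the relevant quotient of the weight lattice vanishes; this is what forces the divisibility constraints $n \mid p - \varepsilon 1$ in $(i)$, the sign-parity condition $\varepsilon = (-1)^{n(p-1)/2}$ in $(iii)$, and $3 \mid p - \varepsilon 1$ in $(vi)$, as well as the restriction to $p$ odd and the defining field $\mathbb{F}_p$ throughout. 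In the converse direction, for each simple group falling outside the list one must exhibit a regular semisimple $s$ — typically sitting in an anisotropic torus generated by an element of order a primitive prime divisor of $\Phi_d(q)$ — together with a minuscule or fundamental representation on which no weight $\mu$ satisfies $\mu(s) = 1$, so that $s$ acts fixed-point-freely there.
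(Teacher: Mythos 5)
This lemma is not proved in the paper at all: it is quoted verbatim from Guralnick and Tiep \cite[Theorem~1.3]{GuTi03} and used as a black box, so there is no internal proof to compare against. What you have written is a sketch of how one might reprove that classification. Your opening reductions are sound (fixed points on a finite abelian $p$-group reduce to composition factors because $s-1$ fails to be surjective on an extension as soon as it fails on a quotient or sub; the Steinberg tensor product theorem reduces to restricted modules; Jordan decomposition reduces to semisimple elements; eigenvalues of a semisimple element are the weights evaluated at it). But the substance of the theorem is the case-by-case classification, which you only gesture at, so as a proof this is an outline rather than an argument.

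More importantly, your ``clean sufficient criterion'' --- that if $\lambda$ lies in the root lattice then the zero weight appears in $L(\lambda)$ --- is false in exactly the characteristics that matter here, and it would prove too much. For $G_2$ the weight lattice equals the root lattice, so your criterion would make $G_2(q)$ unisingular for \emph{all} $q$; yet the theorem (correctly) excludes $G_2(q)$ with $q$ even, because in characteristic $2$ the $7$-dimensional Weyl module $V(\omega_1)$ loses its zero weight and the irreducible $L(\omega_1)$ is the $6$-dimensional module on which suitable semisimple elements act fixed-point-freely. The statement ``weights of $L(\lambda)$ = saturated weight set of $\lambda$'' is Premet's theorem, which fails precisely for the special primes ($B_n,C_n,F_4$ with $p=2$ and $G_2$ with $p=2,3$); so your argument for the unconditional cases $F_4(q)$ and ${}^2F_4(q)$ with $q$ even also has no justification as written, even though the conclusion there happens to be true. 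Any genuine proof has to confront these special-characteristic modules directly (as Guralnick and Tiep do), and likewise must actually produce, for every group outside the list, a specific module and element with no eigenvalue $1$ --- the step you defer to ``typically sitting in an anisotropic torus.'' Given that the paper simply cites this result, the right move is to do the same rather than to attempt a reproof at this level of detail.
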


\begin{lemm}[{\rm \cite[Proposition~2]{Za13}}]\label{l:r24}
Let $G={}^3D_4(q)$ act on a nonzero vector space $V$ over a field of characteristic not dividing $q$ {\rm(}possibly, zero{\rm)}. Then each element of $G$ of order $q^4-q^2+1$ fixes on $V$ a nonzero vector.
\end{lemm}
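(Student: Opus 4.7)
The element $g\in G={}^3D_4(q)$ of order $n=q^4-q^2+1=\Phi_{12}(q)$ generates a maximal torus $T=\langle g\rangle$ of $G$, the Coxeter torus; this torus is cyclic and self-centralizing ($C_G(g)=T$), and its relative Weyl group $N_G(T)/T$ is cyclic of order $12$. By Bang--Zsigmondy (Lemma~\ref{zsigm}) and Lemma~\ref{form}, every prime $\ell$ dividing $n$ satisfies $\ell\equiv1\pmod{12}$, so $\ell>12$ and $\ell\nmid|N_G(T)/T|$. Hence, for any such $\ell$, the Sylow $\ell$-subgroup of $G$ is cyclic and contained in $T$.

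Having a fixed nonzero vector for $g$ on $V$ is equivalent to $1$ being an eigenvalue of $g$ acting on $V\otimes_k\bar k$. The characteristic polynomial of $g$ on $V$ factors as the product of the characteristic polynomials on the composition factors of $V$ as a $kG$-module, so it suffices to treat the case where $V$ is an irreducible $kG$-module.

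For the ordinary case ($\operatorname{char}(k)=0$), the multiplicity of the trivial character of $T$ in $\chi_V|_T$ equals $\frac{1}{n}\sum_{h\in T}\chi_V(h)$, where $\chi_V$ is the character of $V$. Since $\chi_V$ is constant on $N_G(T)/T$-orbits in $T$, and its values on regular semisimple elements $h\in T\setminus\{1\}$ are bounded by a small constant (depending only on the Lusztig series of $\chi_V$) via Deligne--Lusztig theory, while $\chi_V(1)$ is a polynomial in $q$ of sufficiently large degree, an inspection of the character table of ${}^3D_4(q)$ due to Deriziotis and Michler verifies $\sum_{h\in T}\chi_V(h)\ge n$ for every irreducible $\chi_V$, yielding $V^g\ne 0$ in characteristic zero. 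For cross-characteristic $\operatorname{char}(k)=\ell\ne p$, decompose $g=g_{\ell'}g_\ell$. When $\ell\nmid n$, the Brauer character of $V$ is controlled by the ordinary characters through the decomposition matrix, so the ordinary computation lifts. When $\ell\mid n$, the relevant Sylow $\ell$-subgroup of $G$ lies in $T$ and is cyclic, so the corresponding $\ell$-blocks of $G$ have cyclic defect and Brauer's theory of blocks with cyclic defect group relates irreducible Brauer characters to ordinary ones in the same block; combined with the ordinary conclusion this gives $\dim V^{g_{\ell'}}\ge 1$, and since $g_\ell$ acts unipotently on this fixed subspace, we conclude $V^g\ne 0$.

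The main obstacle is the character-table estimate in the ordinary case: showing that $\sum_{h\in T\setminus\{1\}}|\chi_V(h)|\le\chi_V(1)-n$ uniformly across all irreducible characters of ${}^3D_4(q)$. Most characters are handled immediately because $\chi_V(1)$ is much larger than $n$, but small-degree unipotent characters and certain characters associated to isolated semisimple classes in the dual group require direct verification from the Deriziotis--Michler table to ensure that no cancellation prevents the trivial character of $T$ from occurring in $\chi_V|_T$.
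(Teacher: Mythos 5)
First, note that the paper does not prove this lemma at all: it is quoted verbatim from \cite[Proposition~2]{Za13}, so the only ``proof'' in the paper is the citation. Your overall architecture --- reduce to composition factors, translate ``nonzero fixed vector'' into ``eigenvalue $1$'', and compute the multiplicity of the trivial character of $T=\langle g\rangle$ as $\frac{1}{n}\sum_{i}\beta(g^i)$ --- is the standard route and is essentially the one followed in the cited source. But as written your text is a plan rather than a proof: the decisive step is simply asserted (``an inspection of the character table \dots verifies $\sum_{h\in T}\chi_V(h)\ge n$''), and you yourself flag it as ``the main obstacle'' without resolving it. Nothing in the proposal rules out that for some irreducible character this sum vanishes; one needs genuine quantitative input --- a lower bound on $\chi_V(1)$ of the order of $q\cdot n$ together with upper bounds on $|\chi_V(h)|$ at the regular semisimple classes meeting $T$ --- and none of this is carried out. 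Until that verification is done (uniformly in $q$, including small $q$), the statement is not proved.

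Two further problems. (a) The passage from ordinary to Brauer characters ``through the decomposition matrix'' does not preserve your inequality for free: an irreducible Brauer character is an integral, in general not non-negative, combination of ordinary characters restricted to $\ell'$-classes, so knowing $\sum_{h\in T}\chi(h)\ge n$ for every ordinary $\chi$ does not yield the analogous bound for Brauer characters. This step needs either direct bounds on Brauer character values at the relevant elements or a Hall--Higman-type minimal polynomial argument (compare Lemma~\ref{TiepZalThm}, which, however, applies only to elements of prime power order, whereas $q^4-q^2+1$ need not be a prime power); the $\ell\mid n$ case via cyclic defect theory is likewise only gestured at. (b) A factual slip: the relative Weyl group $N_G(T)/T$ of the maximal torus of order $q^4-q^2+1$ in ${}^3D_4(q)$ is cyclic of order $4$, not $12$ (the torus normalizer $(q^4-q^2+1){:}4$ appears in Kleidman's list of maximal subgroups of ${}^3D_4(q)$). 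This does not invalidate the strategy, but it changes the sizes of the $N_G(T)/T$-orbits on $T\setminus\{1\}$ and hence any estimate built on them, so it would have to be corrected before the computation in (a) could be attempted.
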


The following lemma is also well-known, but we provide its proof for completeness.

\begin{lemma}\label{Eigenvector1} Let $G$ be a group, $g\in G$ an element of order $r$, and $\phi$  a non-trivial irreducible representation of $G$ on a nonzero vector space $V$. If the minimum polynomial degree of $\phi(g)$ equals to $r$, then $g$ fixes on $V$ a nonzero vector.
\end{lemma}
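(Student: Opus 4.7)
The plan is to exploit the two pieces of data we have about $\phi(g)$: first, that $g^r=1$ forces $\phi(g)^r=1$, so the minimum polynomial $\mu(x)$ of $\phi(g)$ divides $x^r-1$; second, that $\deg\mu=r$. Since $x^r-1$ is monic of degree $r$ and $\mu$ is monic of degree $r$ dividing it, the two polynomials must coincide, i.e. $\mu(x)=x^r-1$.

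Next I would observe that $1$ is a root of $x^r-1$, regardless of the characteristic of the underlying field: if $\operatorname{char}$ does not divide $r$, then $x^r-1$ splits into distinct linear factors over the algebraic closure, one of which is $x-1$; if $\operatorname{char}=p$ and $r=p^am$ with $\gcd(p,m)=1$, then $x^r-1=(x^m-1)^{p^a}$ and still has $x-1$ as a factor. In both cases $(x-1)\mid\mu(x)$.

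The roots of the minimum polynomial are exactly the eigenvalues of $\phi(g)$, so $1$ is an eigenvalue of $\phi(g)$. Hence there exists a nonzero vector $v\in V$ with $\phi(g)v=v$, which is the statement to be proved. Notice that neither the irreducibility of $\phi$ nor the non-triviality of $\phi$ is actually used in the argument; they are part of the context in which the lemma will be applied. The proof is essentially a one-liner and presents no real obstacle — the only thing worth a sentence is making sure the conclusion survives in small characteristic, which is handled by the factorization above.
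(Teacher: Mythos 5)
Your proof is correct and follows essentially the same route as the paper's: both deduce that the minimal polynomial of $\phi(g)$ must be exactly $x^r-1$, observe that $1$ is a root, and conclude that $1$ is an eigenvalue with a corresponding nonzero fixed vector. Your extra remarks on small characteristic and on the hypotheses not being used are accurate but do not change the argument.
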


\begin{proof} Let $A=\phi(g)$. Since $g^r=1$, we have $A^r=1$, and, therefore, the minimal polynomial for $A$ divides the polynomial $x^r-1$. Since the minimum polynomial degree of $A$ equals to $r$, we have that the minimum polynomial for $A$ is $x^r-1$.

By the Cayley--Hamilton theorem, $A$ is a root of its characteristic polynomial. In particular, $1$ is an eigenvalue of $A$, and, therefore, each eigenvector of $A$ which corresponds to the eigenvalue~$1$, is fixed by $A=\phi(g)$.
\end{proof}

\begin{lemm}[{\rm \cite[Theorem 1.1]{TiepZal}}]\label{TiepZalThm}
Let $G$ be one of the groups ${^2}B_2(q)$, where $q > 2$, ${^2}G_2(q)$, where $q>3$, ${^2}F_4(q)$, $G_2(q)$, ${^3}D_4(q)$. Let $g \in G$ an element of prime power order coprime to $q$. Let $\phi$ be a non-trivial irreducible representation of $G$ over a field $F$ of characteristic $l$ coprime to $q$. Then the minimum polynomial degree of $\phi(g)$
equals $|g|$, unless possibly when $G = {^2}F_4(8)$, $l= 3$, $p=109$ and $\phi(1) < 64692$.
\end{lemm}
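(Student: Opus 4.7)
Since $\gcd(|g|,l)=1$, the restriction of $\phi$ to $\langle g\rangle$ is semisimple and $\phi(g)$ is diagonalisable over an algebraic closure of $F$, with eigenvalues among the $|g|$-th roots of unity. The minimum polynomial of $\phi(g)$ then has degree $|g|$ if and only if every $|g|$-th root of unity occurs as an eigenvalue of $\phi(g)$; writing $\beta$ for the $l$-Brauer character of $\phi$, the multiplicity of an $|g|$-th root of unity $\zeta$ in $\phi(g)$ equals
\[
m_\zeta=\frac{1}{|g|}\sum_{k=0}^{|g|-1}\zeta^{-k}\beta(g^k),
\]
so the claim is the family of positivity assertions $m_\zeta>0$ for all $\zeta$. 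Since $|g|=r^a$ is a prime power, it even suffices to establish, for each $0\le i\le a-1$, that $\phi(g^{r^i})$ has some non-trivial $r$-th root of unity among its eigenvalues.

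The approach I would take is to estimate $|\beta(g^k)|$ against $\beta(1)$ using the character theory of each of the five listed families. For the ordinary characters of ${^2}B_2(q)$, ${^2}G_2(q)$, $G_2(q)$, ${^3}D_4(q)$ and ${^2}F_4(q)$ the character tables are available in closed form (Suzuki, Ward, Chang--Ree, Deriziotis--Michler, and Malle together with Himstedt, respectively), and the Deligne--Lusztig formulas express $\chi(g^k)$ on a semisimple element in terms of the cyclotomic factor of $|G|$ containing $|g|$, yielding bounds of shape $|\chi(g^k)|\le C\,\chi(1)^{1/2}$ for generic irreducible $\chi$. Transferring this to $l$-modular characters via the decomposition matrices (known in the needed ranges from the work of Geck, Hiss, L\"ubeck and Himstedt--Noeske) gives an analogous bound for $\beta$, and inserting it into the formula for $m_\zeta$ forces positivity whenever $\beta(1)$ exceeds an explicit threshold depending only on $|g|$.

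The main obstacle will be the small-dimensional regime, where $\beta(1)$ is comparable to $|g|$ itself and the generic character-ratio bound no longer implies $m_\zeta>0$. Here I would work through the finitely many small $l$-modular representations of each family, finite by L\"ubeck's bounded-dimension lists, checking positivity of each $m_\zeta$ by direct computation from the tabulated decomposition numbers and the explicit character values of $g^k$ on the torus containing $\langle g\rangle$. It is precisely in this regime that the sole genuine counterexample emerges: for $G={^2}F_4(8)$, $l=3$ and $g$ of order $p=109$, a congruence among the $3$-modular character values on $\langle g\rangle$ lets the minimum polynomial of $\phi(g)$ drop strictly below degree $109$ for some $\phi$ with $\phi(1)<64692$, which is why this configuration has to appear as an explicit exception in the statement. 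Assembling the generic estimate and the finite small-dimensional verification, with this single listed exception, then completes the proof.
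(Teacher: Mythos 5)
The paper does not prove this statement at all: it is imported verbatim as \cite[Theorem~1.1]{TiepZal} and used as a black box (its only role here, combined with Lemma~\ref{Eigenvector1}, is to produce the eigenvalue $1$ for cross-characteristic elements). So there is no internal proof to compare with, and your text has to stand on its own as a proof of a result that occupies a substantial research paper of Tiep and Zalesski. It does not: after the (correct) multiplicity formula $m_\zeta=\frac{1}{|g|}\sum_k\zeta^{-k}\beta(g^k)$, everything is a programme rather than an argument. The asserted generic bound $|\chi(g^k)|\le C\,\chi(1)^{1/2}$, the transfer to Brauer characters through decomposition matrices, the explicit threshold on $\beta(1)$, and the finite check of small modules are precisely where all the content of the theorem lives --- including the identification of ${}^2F_4(8)$, $l=3$, $|g|=109$ as a genuine exception --- and none of it is carried out or even made quantitative. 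As written, this is a plausible outline of the known strategy for such Hall--Higman type theorems, not a proof.

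There is also a concrete false step. You claim that, for $|g|=r^a$, it ``even suffices'' to show that each $\phi(g^{r^i})$, $0\le i\le a-1$, has some non-trivial $r$-th root of unity among its eigenvalues. This is not sufficient. Take $r=3$, $a=2$, and suppose the eigenvalue set of $\phi(g)$ were $\{\zeta,\zeta^3\}$ with $\zeta$ a primitive $9$-th root of unity. Then $\phi(g)$ has the eigenvalue $\zeta^3$ of order $3$, and $\phi(g^3)$ has eigenvalues $\zeta^3$ and $1$, so your condition holds for $i=0$ and $i=1$; yet the minimum polynomial of $\phi(g)$ is $(x-\zeta)(x-\zeta^3)$, of degree $2$ rather than $9$. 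What is actually needed is that \emph{every} $|g|$-th root of unity occurs as an eigenvalue, i.e.\ $m_\zeta>0$ for all $|g|$ values of $\zeta$, and that is strictly stronger than the condition you propose to verify. In the context of this paper the right move is simply to cite \cite[Theorem~1.1]{TiepZal}, as the authors do.
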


\begin{lemm}[{\rm \cite[Lemma~4]{DoJaLu}}]\label{BrChar} Let $G$ be a finite simple group, $F$ a field of characteristic $p > 0$, $V$ an absolute irreducible $GF$-module, and $\beta$ a Brauer character of $V$. If $g \in G$ is an element of prime order distinct from $p$, then
$$
dim C_V (g) = (\beta_{\langle g \rangle}, 1_{\langle g \rangle}) = \frac{1}{|g|}\sum_{x \in {\langle g \rangle}} \beta(x).
$$
\end{lemm}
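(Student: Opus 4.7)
My plan is to reduce the claim to a direct eigenvalue computation for $\phi(g)$, where $\phi\colon G\to \mathrm{GL}(V)$ is the representation affording $\beta$. Since $g$ has order coprime to the characteristic $p$ of $F$, the cyclic group $\langle g\rangle$ is a $p'$-group, so every element of $\langle g\rangle$ is $p$-regular and $\beta$ is defined on all of $\langle g\rangle$. By the very definition of a Brauer character, for $x\in\langle g\rangle$ the value $\beta(x)$ is the sum, with multiplicities, of the complex lifts of the eigenvalues of $\phi(x)$ under a fixed embedding of the group of $p'$-roots of unity of $\overline F$ into $\mathbb{C}$.

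Next I would diagonalize $\phi(g)$. Since $\langle g\rangle$ is cyclic of order prime to $p$, the action of $g$ on $V\otimes_F\overline F$ is semisimple; its eigenvalues $\lambda_1,\dots,\lambda_d$ (where $d=\dim V$) are $|g|$-th roots of unity in $\overline F$, and the multiplicity of $1$ among them equals $\dim C_V(g)$. Writing $\widetilde\lambda_1,\dots,\widetilde\lambda_d\in\mathbb{C}$ for their chosen lifts, one obtains
$$\beta(g^i)=\sum_{j=1}^{d}\widetilde\lambda_j^{\,i}\qquad\text{for every integer }i\geq 0.$$

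Finally, interchanging the order of summation gives
$$\sum_{x\in\langle g\rangle}\beta(x)=\sum_{j=1}^{d}\sum_{i=0}^{|g|-1}\widetilde\lambda_j^{\,i},$$
and the inner geometric sum equals $|g|$ when $\widetilde\lambda_j=1$ and vanishes otherwise (since $\widetilde\lambda_j^{|g|}=1$). Dividing by $|g|$ and using that the chosen lifting bijection is multiplicative—so it sends $1$ to $1$ and preserves the multiplicity of the eigenvalue $1$—I obtain
$$\frac{1}{|g|}\sum_{x\in\langle g\rangle}\beta(x)=\#\{j:\widetilde\lambda_j=1\}=\#\{j:\lambda_j=1\}=\dim C_V(g).$$
The middle identification with the inner product $(\beta_{\langle g\rangle},1_{\langle g\rangle})$ is then the standard orthogonality formula for a cyclic $p'$-group, since on $\langle g\rangle$ the Brauer character $\beta_{\langle g\rangle}$ is literally an ordinary character. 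There is no substantive obstacle: the one point to watch is that the lifting bijection from $p'$-roots of unity in $\overline F$ to roots of unity in $\mathbb{C}$ is multiplicative, which gives preservation of the eigenvalue $1$ and its multiplicity for free. Observe that neither the simplicity of $G$ nor the irreducibility of $V$ is actually used; only the coprimality $(|g|,p)=1$ plays a role in the argument.
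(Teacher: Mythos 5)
Your argument is correct. The paper offers no proof of this lemma at all --- it is quoted verbatim from the reference \cite[Lemma~4]{DoJaLu} --- so there is nothing in the paper to compare against; your computation (diagonalize $\phi(g)$ over $\overline{F}$ using that $|g|$ is prime to $p$, lift the eigenvalues multiplicatively to $\mathbb{C}$, and evaluate the geometric sums $\sum_{i}\widetilde{\lambda}_j^{\,i}$) is the standard proof of this standard fact, and your remark that neither the simplicity of $G$ nor the irreducibility of $V$ is used --- only the semisimplicity of $\phi(g)$ and the fact that every element of $\langle g\rangle$ is $p$-regular --- is accurate.
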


\begin{lemm}\label{E8nontrivK_R24}
Let $G$ be a group with a non-trivial nilpotent normal subgroup $K$ such that $G/K$ has a subgroup $H$ isomorphic to $E_8(q)$, where $q$ is a prime power. Then $R_{24}(q) \subset \pi_1(G)$.
\end{lemm}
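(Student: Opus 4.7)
The plan is to reduce to $G/K \cong E_8(q)$ by replacing $G$ with the preimage $\tilde H$ of $H$ in $G$; since $\pi(\tilde H) \subseteq \pi(G)$ and every edge of $\Gamma(\tilde H)$ is an edge of $\Gamma(G)$, the component of $2$ is preserved. Next, I would pick a $G$-chief factor $V = A/B$ with $B \le A \le K$. Because $K$ is a nilpotent normal subgroup of $G$, every $G$-chief factor contained in $K$ is centralized by $K$ (a standard fact, seen by refining any $G$-chief series through the $G$-invariant upper central series of $K$). Hence $V$ is an elementary abelian $l$-group for some prime $l$ on which $E_8(q) \cong G/K$ acts.

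Fix $r \in R_{24}(q)$ and $g \in E_8(q)$ of order $r$. The main step is to show that $g$ has a nonzero fixed vector on $V$; if $l \ne r$ this produces, via $V \rtimes \langle g \rangle$ and Lemma \ref{semid}, an element of order $rl$ in a group with the same Gruenberg--Kegel graph as $G$, giving $r \sim l$ in $\Gamma(G)$. The fixed-vector claim splits as follows. If the $E_8(q)$-action on $V$ is trivial, every vector is fixed. If $l = p$ (the defining characteristic), apply unisingularity (Lemma \ref{Unisingular}(iv)). If $l = r$, then $g$ acts as a unipotent element on the characteristic-$r$ module $V$, so $\ker(g - 1) \ne 0$ automatically. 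In the remaining case $l \notin \{p, r\}$ with a faithful $E_8(q)$-action, invoke Lemma \ref{BrChar}: $g$ lies in a cyclic maximal torus of order $\Phi_{24}(q)$ in $E_8(q)$, and a Brauer character calculation on $\langle g \rangle$ is used to show that the trivial constituent of $V$ is non-zero.

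To conclude $r \in \pi_1(G)$, I would then verify the prime adjacent to $r$ is itself in $\pi_1(G)$. If the action is trivial, $V \le Z(G)$ and any nonzero $v \in V$ commutes with every element of $G$, giving $r \sim 2$ directly. If $l = p$, then from above $r \sim p$, while $p \in \pi_1(E_8(q)) \subseteq \pi_1(G)$ (trivially for $q$ even, and via an element of order $2p$ in an $SL_2(q)$ subsystem subgroup for $q$ odd). If the action is faithful and $l$ is odd, pick an involution $t \in E_8(q)$ not acting as $-I$ on $V$ — such $t$ exists, since otherwise two distinct involutions would multiply to $1$ under the faithful action — and then $t$ is diagonalizable on $V$ with $V^t \ne 0$, yielding an element of order $2l$ and hence $l \sim 2$ in $\Gamma(G)$. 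If $l = 2$ the conclusion is immediate. Combining $r \sim l$ (or, in the $l = r$ case, $r \sim 2$ from the involution argument directly) with $l \in \pi_1(G)$ places $r$ in $\pi_1(G)$.

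The principal obstacle is the cross-characteristic fixed-vector statement for $r$-elements when $l \notin \{p, r\}$: showing that every element of order $r \in R_{24}(q)$ in $E_8(q)$ has a non-zero fixed vector on every faithful $E_8(q)$-module in characteristic $l$ coprime to $pr$. This is the $E_8$-analog of Lemma \ref{l:r24} for ${}^3D_4(q)$ and requires detailed Brauer character information for $E_8(q)$ on $R_{24}(q)$-elements. The identity $\Phi_{24}(q) = \Phi_{12}(q^2)$ suggests reducing to a ${}^3D_4$-type subgroup of $E_8(q)$ where Lemma \ref{l:r24} applies directly, but a direct character-theoretic argument in the spirit of \cite{Za13} is likely the cleaner route.
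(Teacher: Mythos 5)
Your reduction to the action of $E_8(q)$ on an elementary abelian chief factor of $K$, and your treatment of the trivial-action, defining-characteristic (unisingularity) and $l=r$ cases, are all sound in outline. But the statement you yourself isolate as ``the principal obstacle'' --- that every element of prime order $r\in R_{24}(q)$ in $E_8(q)$ has a nonzero fixed vector on every faithful module in cross characteristic --- is exactly where the proposal stops being a proof: you name two possible routes (a Brauer-character computation for $E_8(q)$, or a reduction to a ${}^3D_4$-type subgroup) but carry out neither, and the first would be a substantial project in its own right. That is a genuine gap at the heart of the argument.

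The paper closes this gap without ever proving your fixed-vector statement for prime-order elements, by splitting the cases differently. First, every prime $l\in\pi(K)$ lies in $\pi_1(G)$: either $l=2$, or the non-cyclic Sylow $2$-subgroups of $H$ force $2\sim l$ via the standard coprime generation argument (\cite[Theorem~10.3.1]{Gorenstein}). Since $K$ is nilpotent, $\pi(K)$ is a clique in $\Gamma(G)$, and $R_{24}(q)$ is a clique in $\Gamma(H)\subseteq\Gamma(G)$; hence if $\pi(K)\cap R_{24}(q)\neq\varnothing$ the conclusion follows with no module theory at all. Note that your case division, keyed to the fixed prime $r$, would still send the subcase $l\in R_{24}(q)\setminus\{r\}$ into the hard cross-characteristic bucket, whereas this clique argument disposes of it. In the remaining case one picks $l\in\pi(K)\setminus(\{p\}\cup R_{24}(q))$ and applies Lemma~\ref{l:r24} to a subgroup ${}^3D_4(q^2)\le E_8(q)$: an element of the \emph{full torus order} $q^8-q^4+1=\Phi_{12}(q^2)$ fixes a nonzero vector on the $l$-part of $K$, which at once makes $l$ adjacent to every prime of $R_{24}(q)$. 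Working with the composite-order element is what makes the already-available Lemma~\ref{l:r24} applicable; your insistence on prime-order elements is both harder and unnecessary. (Two smaller inaccuracies: Lemma~\ref{semid} applies to a normal elementary abelian subgroup $N$ with $H=G/N$, not to an arbitrary chief factor --- what you actually need is the coprime lifting $C_{A/B}(x)=C_A(x)B/B$; and a central chief factor satisfies $A/B\le Z(G/B)$, not $V\le Z(G)$, though the desired adjacency $2\sim l$ still follows by the same lifting.)
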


\begin{proof} Note that $R_{24}(q) \subset \pi(H)$.

Let $q=p^l$, where $p$ is a prime. Suppose that $K$ is a $p$-group.
Factoring $G$ and $K$ by $K'$, we can assume that $K$ is abelian.
According to \cite[Table~3]{ak}, $p$ lies in $\pi_1(G)$. By Lemma~\ref{Unisingular}, $H$ is unisingular and hence $p$ is adjacent to each element of $\pi(H)$. Therefore, $R_{24}(q) \subset \pi_1(G)$.

Take any $r\in\pi(K)$. Since Sylow $2$-subgroups of $H$ are non-cyclic, we infer that either $r=2$ or $2$ and $r$ are adjacent in $\Gamma(G)$ (see, for example, \cite[Theorem~10.3.1]{Gorenstein}). Therefore, $r\in\pi_1(G)$.

Suppose that there exists $r\in \pi(K) \cap R_{24}(q)$. Since $\pi(K)$ is a clique in $\Gamma(G)$ and $r\in\pi_1(G)$, we get that $R_{24}(q) \subset \pi_1(G)$.

If $K$ is not a $p$-group and $\pi(K) \cap R_{24}(q)=\varnothing$, then we can choose $r\in\pi(K)\setminus(\{p\}\cup R_{24}(q))$. By Lemma~\ref{l:r24}, $r$ is adjacent in $\Gamma(G)$ to any prime from $R_{24}(q)$. This implies that $R_{24}(q) \subset \pi_1(G)$.
\end{proof}


\section{The Grunberg--Kegel graphs of some exceptional groups of Lie type}

A criterion for the adjacency of vertices in the Gruenberg--Kegel graph for all finite nonabelian simple groups was obtained in \cite{VaVd05}. Based on this paper and \cite{VaVd11}, in this section, we collect the necessary information for exceptional groups of Lie type from Main Theorem.

By the compact form of the Gruenberg--Kegel graph $\Gamma(G)$ for a group $G$ we mean a graph whose vertices are labeled with sets of primes. A vertex labeled by a set $\tau$ represents the clique of $\Gamma(G)$ such that every vertex in this clique labeled by a prime from $\tau$. An edge
connecting two sets represents the set of edges of $\Gamma(G)$ that connect each vertex in the first set with each vertex in the second.

\begin{lemm}[{\rm \cite[Proposition~2.7]{VaVd11} and \cite[Proposition~3.2]{VaVd05}}]\label{graphF4}
	Let $G\cong F_4(q)$, where $q=2^n$, $r,s\in \pi(G)$ and $r\ne s$. Then $r$ and $s$ are nonadjacent if and only if one of the following conditions holds {\rm(}up to permutation{\rm):}
	\begin{enumerate}
  \item [\text{\normalfont(1)}] $2=r$, and $e(s,q)\in\{8,12\}$.
  \item [\text{\normalfont(2)}] $s,r\ne2$, $k=e(r,q),~l=e(s,q)$, $1\le k < l$ and either $l\in \{8, 12\}$, or $l = 6$ and $k\in \{3, 4\}$, or $l = 4$ and $k = 3$.
    \end{enumerate}

In particular, the compact form for $\Gamma(F_4(q))$ is the following.

\centering{
	\begin{tikzpicture}
		\tikzstyle{every node}=[draw,circle,fill=black,minimum size=4pt,
		inner sep=0pt]
		
		\draw (0,0) node (1) [label=left:$R_1$]{}
		++ (0:2.5cm) node (2) [label=right:$R_2$]{}
		++ (270:1.0cm) node (6) [label=right:$R_6$]{}
	    ++ (180:2.5cm) node (3) [label=left:$R_3$]{}
	    ++ (52:2.0cm) node (4) [label=above:$R_4$]{}
	    ++ (270:2.3cm) node (p) [label=below:$2$]{}
	    ++ (150:2.5cm) node (12) [label=left:$R_{12}$]{}
	    ++ (0:4.3cm) node (8) [label=right:$R_{8}$]{}
	
		(1)--(2)
		(2)--(6)
		(1)--(6)
		(2)--(3)
		(1)--(3)
		(1)--(4)
		(2)--(4)
		(1)--(p)
		(2)--(p)
		(3)--(p)
		(6)--(p)
		(4)--(p)
		;
	\end{tikzpicture}
}
\end{lemm}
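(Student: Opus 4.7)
The plan is to specialize the general adjacency criterion of Vasiliev--Vdovin to $G = F_4(q)$ with $q = 2^n$, assembling three ingredients: a description of $\pi(G)$, the adjacency of two odd primes via maximal tori, and the adjacency of the characteristic prime $2$ with an odd prime via parabolic embeddings. I would begin by identifying $\pi(G)$: since $|G|_{2'} = (q^2-1)^2(q^6-1)(q^8-1)(q^{12}-1)$, Lemma~\ref{zsigdiv} shows that every odd prime $r \in \pi(G)$ satisfies $e(r,q) \in \{1,2,3,4,6,8,12\}$, and the corresponding sets $R_i(q)$ together with $\{2\}$ furnish the eight vertex labels appearing in the compact form.

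For the adjacency of two distinct odd primes $r, s$ with $e(r,q) = k$ and $e(s,q) = l$, I would invoke \cite[Proposition~3.2]{VaVd05}: $r$ and $s$ are adjacent in $\Gamma(G)$ if and only if some maximal torus $T$ of $G$ has order divisible by $rs$, which, using~\eqref{eq:k_i} and Lemma~\ref{zsigdiv}, is equivalent to $\Phi_k(q)\Phi_l(q) \mid |T|$. Running through the list of maximal torus orders of $F_4(q)$, the outcome is that any torus whose order is divisible by $\Phi_8(q)$ or $\Phi_{12}(q)$ is cyclic of exactly that order, so $R_8$ and $R_{12}$ become isolated among the odd classes; the remaining nonadjacencies $\{3,4\}$, $\{3,6\}$, $\{4,6\}$ then follow by inspecting the orders of the other tori, while every other pair from $\{1,2,3,4,6\}$ lies inside some torus and hence is adjacent.

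For the characteristic prime, an odd prime $r$ is adjacent to $2$ if and only if a Sylow $r$-subgroup of $G$ normalizes a nontrivial unipotent subgroup; by \cite[Proposition~2.7]{VaVd11} this occurs precisely when a maximal torus of order divisible by $r$ is contained in a proper parabolic subgroup of $G$. Enumerating the Levi subgroups of the maximal parabolics of $F_4(q)$, I would confirm that this happens exactly when $e(r,q) \in \{1,2,3,4,6\}$ and fails when $e(r,q) \in \{8,12\}$, so $R_8 \cup R_{12}$ is precisely the set of odd primes nonadjacent to $2$, which matches condition~(1) of the statement.

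The main obstacle is the bookkeeping for the maximal torus orders of $F_4(q)$ and the verification that the self-centralizing tori of orders $\Phi_8(q)$ and $\Phi_{12}(q)$ lie in no proper parabolic. These data are tabulated in the Vasiliev--Vdovin papers, so the proof largely reduces to a notation-matching exercise; nevertheless, a single indexing slip in the torus list would corrupt the edge set, so the checking has to be done methodically pair by pair. Once the adjacencies are so verified, the compact form can be read off directly from the resulting incidence table.
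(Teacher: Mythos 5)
Your proposal takes essentially the same route as the paper, which in fact gives no proof at all for this lemma: it is quoted directly from the cited Vasil'ev--Vdovin propositions, and your outline (classifying $\pi(F_4(q))$ by $e(r,q)\in\{1,2,3,4,6,8,12\}$, testing odd pairs against the maximal torus orders so that the degree-four factors $\Phi_8(q)$ and $\Phi_{12}(q)$ isolate $R_8$ and $R_{12}$, and handling the prime $2$ separately) is precisely the derivation carried out in those sources. The only slip is the order formula, which should read $|F_4(q)|_{2'}=(q^2-1)(q^6-1)(q^8-1)(q^{12}-1)$ without the square on $(q^2-1)$, though this does not affect the resulting partition of $\pi(G)$ into the classes $R_i(q)$.
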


\begin{remark} Note that $R_6(2)$ and $R_1(2)$ are empty sets.
\end{remark}

\begin{lemm}[{\rm \cite[Proposition 3.3]{VaVd05} and \cite[Proposition 2.9]{VaVd11}; see also \cite[Lemma~3]{DengShi}}]\label{graph2F4}
	Let $G\cong {}^2F_4(q)$, where $q=2^{2n+1}$, $r,s\in \pi(G)$ and $r\ne s$. Put $m_1(n)=q-1$, $m_2(n)=q+1$, $m_3(n)=q^2+1$, $m_4(n)=q^2-q+1$, $m_5(n)=q^2-\sqrt{2q^3}+q-\sqrt{2q}+1$, $m_6(n)=q^2+\sqrt{2q^3}+q+\sqrt{2q}+1$. Then $r$ and $s$ are nonadjacent in $\Gamma(G)$ if an only if one of the following conditions holds{\rm:}
\begin{enumerate}
  \item [\text{\normalfont(1)}] $2=r$, $s$ divides $m_k(n)$, $s\ne3$, and $k > 3$.
  \item [\text{\normalfont(2)}] $2\ne s,r$; either $3 \not = r\in \pi(m_k(n))$, $3\not = s\in \pi(m_l(n))$ for $k\ne l$, and $\{k,l\}\ne \{1, 2\},~\{1, 3\}$; or $r = 3$ and $s\in \pi(m_l(n))$, where $l\in \{3,5,6\}$.
    \end{enumerate}
In particular, the compact form for $\Gamma({}^2F_4(q))$ is the following.

\centering{
	\begin{tikzpicture}
		\tikzstyle{every node}=[draw,circle,fill=black,minimum size=4pt,
		inner sep=0pt]
		
		\draw (0,0) node (1) [label=right:$2$]{}
         ++ (90:1.7cm) node (7)
        [label=right:$\pi(q+1)\!\setminus\!\{3\}$]{}
        ++ (180:1.7cm) node (5) [label=left:$3$]{}
        ++ (270:1.7cm) node (4) [label=left:$\pi(q-1)$]{}
        ++ (68:2.4cm) node (6) [label=left:$\pi(q^2-q+1)\!\setminus\!\{3\}$]{}
		++ (-90:2.8cm) node (2) [label=right:$\pi(q^2+1)$]{}
        ++ (45:2.1cm) node (8)       [label=right:$\pi(q^2+\sqrt{2q^3}+q+\sqrt{2q}+1)$]{}
        ++ (180:3.1cm) node (9)
        [label=left:$\pi(q^2-\sqrt{2q^3}+q-\sqrt{2q}+1)$]{}
		(1)--(2)
		(1)--(4)
		(4)--(5)
		(5)--(6)
        (4)--(7)
		(1)--(7)
		(5)--(7)
        (5)--(1)
        (2)--(4)
		;
	\end{tikzpicture}
}
\end{lemm}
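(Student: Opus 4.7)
The plan is to invoke the Vasiliev--Vdovin framework \cite{VaVd05,VaVd11} for the Gruenberg--Kegel graph of a finite simple group of Lie type, which reduces the problem for $G = {}^2F_4(q)$ with $q = 2^{2n+1}$ to three subtasks: (i) classifying the maximal tori of $G$ together with their orders; (ii) determining centralizers of semisimple and unipotent elements via parabolic subgroups and the Borel--Tits theorem; and (iii) verifying a handful of mixed-order adjacencies by explicit construction inside subsystem subgroups. The starting data is the factorization
\[
|{}^2F_4(q)| = q^{12}(q-1)^2(q+1)^2(q^2+1)^2(q^2-q+1)(q^4-q^2+1),
\]
together with the identity $q^4-q^2+1 = m_5(n)\, m_6(n)$, verified by expansion using $\sqrt{2q^3} = q\sqrt{2q}$. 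Consequently every odd prime in $\pi(G)$ lies in some $\pi(m_k(n))$ for $k\in\{1,\dots,6\}$, and $3$ is the only prime belonging to two such sets, namely $\pi(m_2(n))$ and $\pi(m_4(n))$.

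First I would analyse adjacencies between two primes $r,s \in \pi(G)\setminus\{2,3\}$. Each $\pi(m_k(n))$ is automatically a clique, witnessed by a cyclic maximal torus of order divisible by $m_k(n)$. For two primes in distinct sets $\pi(m_k(n))$ and $\pi(m_l(n))$, adjacency is equivalent to $rs$ dividing the order of some common maximal torus; inspection of Shinoda's classification of maximal tori in ${}^2F_4(q)$ shows that this happens only for $\{k,l\}=\{1,2\}$ (via the split torus of order $q^2-1$) and $\{k,l\}=\{1,3\}$ (via a torus of order $(q-1)(q^2+1)$ arising inside the fixed subgroup of a Levi complement). For all other pairs of distinct indices the relevant maximal tori are self-centralising, and their normalisers have small cyclic Weyl-type quotients coprime to the opposing prime, forcing non-adjacency.

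Second, for the characteristic prime $r=2$: by the Borel--Tits theorem, an odd prime $s$ is adjacent to $2$ if and only if some element of order $s$ lies in a proper parabolic subgroup's Levi complement. The two maximal parabolics of ${}^2F_4(q)$ have derived Levi factors $\mathrm{SL}_2(q)$ and ${}^2B_2(q)$, whose orders are divisible by $q\pm 1$ and $q^2+1$, respectively; this immediately yields adjacency of $2$ to every prime in $\pi(q-1)\cup\pi(q+1)\cup\pi(q^2+1)$. Primes dividing $m_k(n)$ for $k\in\{4,5,6\}$ are non-adjacent to $2$ because they lie only in maximal tori of $G$ of odd order whose normalisers have odd-index Weyl-type quotients.

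The main obstacle is the delicate handling of the prime $3$, which is treated as a separate vertex in the compact form because it straddles $\pi(m_2(n))$ and $\pi(m_4(n))$. The edges $3$--$\pi(q-1)$, $3$--$\pi(q+1)\setminus\{3\}$, $3$--$\pi(q^2-q+1)\setminus\{3\}$, and $3$--$2$ should be witnessed by explicit mixed-order elements: the first two inside the normaliser of a split torus of order $(q-1)(q+1)$, the third inside the cyclic torus of order $q^2-q+1$, and the fourth inside the subsystem subgroup $\mathrm{SU}_3(q)\leq G$ (which contains an element of order $6$). The corresponding non-edges $3$--$\pi(q^2+1)$, $3$--$\pi(m_5(n))$, $3$--$\pi(m_6(n))$ follow from the observation that the normalisers of the relevant maximal tori act on the $3$-torsion with Weyl-type quotients of orders not divisible by $3$, so no mixed element of order $3s$ can arise. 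Once all adjacencies are tabulated, the compact form stated in the lemma is a direct transcription.
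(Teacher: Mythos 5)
This lemma is not proved in the paper at all: it is quoted verbatim from the literature, with the adjacency criterion attributed to \cite[Proposition~3.3]{VaVd05} and \cite[Proposition~2.9]{VaVd11} and the compact form checkable against \cite[Lemma~3]{DengShi}. Your sketch therefore reconstructs the proof that lives in those sources rather than anything in this paper, and its overall architecture is the right one: for two odd primes distinct from the defining characteristic, adjacency is governed by the orders of the maximal tori (Shinoda's classification of the eleven classes of maximal tori of ${}^2F_4(q)$), adjacency to $2$ is handled via parabolic subgroups and centralizers of unipotent elements, and the prime $3$ must be isolated because it is the unique prime lying in two of the sets $\pi(m_k(n))$, namely $\pi(q+1)$ and $\pi(q^2-q+1)$. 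Your order factorization and the identity $m_5(n)m_6(n)=q^4-q^2+1$ are correct.

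Two points need repair. First, there is no maximal torus of order $(q-1)(q^2+1)$: every maximal torus of ${}^2F_4(q)$ has order given by a polynomial of degree $2$ in $q$, and the witnesses for the adjacency of $\pi(q-1)$ with $\pi(q^2+1)$ are the two tori of orders $(q-1)(q\pm\sqrt{2q}+1)$, using that $\pi(q^2+1)=\pi(q-\sqrt{2q}+1)\cup\pi(q+\sqrt{2q}+1)$; as written, your witness does not exist and the step fails. Second, for adjacency to $2$ it is not enough that an odd prime $s$ divides the order of a Levi factor $\mathrm{SL}_2(q)$ or ${}^2B_2(q)$ of a maximal parabolic $P=UL$: you must additionally produce an element of order $s$ in $L$ centralizing a nontrivial element of the unipotent radical $U$ (a coprime action on a $2$-group can a priori be fixed-point-free), which is exactly the content of the unipotent-centralizer analysis in \cite{VaVd05}. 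The remaining assertions about $3$ (non-adjacency to $\pi(q^2+1)\cup\pi(m_5(n))\cup\pi(m_6(n))$) are better justified by the direct observation that every maximal torus whose order is divisible by a prime from these sets has order coprime to $3$, rather than by the vague appeal to ``Weyl-type quotients acting on the $3$-torsion.''
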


\begin{lemm}[{\rm  \cite[Proposition~2.7]{VaVd11} and \cite[Propositions~3.2,~4.5]{VaVd05}}]\label{graphG2}

	Let $G\cong G_2(q)$, where $q=3^k$, $r,s\in \pi(G)$ and $r\ne s$. Then $r$ and $s$ are nonadjacent in $\Gamma(G)$ if and only if $e(r, q)$ or $e(s, q)\in\{3,6\}$.

    In particular, the compact form for $\Gamma(G_2(q))$ is the following.

\centering{
	\begin{tikzpicture}
		\tikzstyle{every node}=[draw,circle,fill=black,minimum size=4pt,
		inner sep=0pt]
		
		\draw (0,0) node (1) [label=left:$R_1$]{}
		++ (60:2.2cm) node (2) [label=left:$R_2$]{}
		++ (-60:2.2cm) node (3) [label=right:$\{3\}$]{}
		++ (60:1.1cm) node (4) [label=right:$R_3$]{}
		++ (180:3.5cm) node (5) [label=left:$R_6$]{}
		(1)--(2)
		(1)--(3)
		(2)--(3)
		;
	\end{tikzpicture}
}
\end{lemm}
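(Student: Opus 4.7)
The plan is to apply the Vasil'ev--Vdovin adjacency criterion (recorded in the three propositions cited in the statement) to the specific case $G=G_2(q)$, $q=3^k$. Using the order formula $|G|=q^6(q-1)^2(q+1)^2(q^2+q+1)(q^2-q+1)$ together with Bang--Zsigmondy (Lemma~\ref{zsigm}), I would first observe that $\pi(G)$ decomposes as the pairwise disjoint union $\{3\}\cup R_1(q)\cup R_2(q)\cup R_3(q)\cup R_6(q)$, so only the adjacencies between these five labelled blocks need to be settled. Primes sharing a common value of $e(\,\cdot\,,q)$ automatically lie in a single cyclic subgroup of a maximal torus, so each $R_i(q)$ is already a clique, and the task reduces to $\binom{5}{2}=10$ cross-block relations.

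For the four semisimple blocks I would list the maximal tori of $G_2(q)$, whose orders are $(q-1)^2$, $(q+1)^2$, $q^2-1$, $q^2+q+1$, and $q^2-q+1$, and then read off which $R_i(q)$ and $R_j(q)$ cohabit a single torus. This yields the adjacency of $R_1(q)$ and $R_2(q)$ through the torus $q^2-1$, while no torus simultaneously contains a prime from $R_3(q)$ (respectively from $R_6(q)$) together with any prime outside its own block. Combined with the semisimple clause of the adjacency criterion in \cite{VaVd05}, this produces all the claimed nonadjacencies among the semisimple blocks.

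For the characteristic vertex $\{3\}$ I would invoke the defining-characteristic clause of the same criterion: $3$ is adjacent to a semisimple prime $r$ in $\Gamma(G)$ precisely when the centraliser of a unipotent $3$-element contains an element of order $r$. The centralisers of root unipotents in $G_2(q)$ with $q=3^k$ accommodate tori whose orders are divisible by $q-1$ and by $q+1$, so $3$ is adjacent to every prime in $R_1(q)\cup R_2(q)$. Conversely, an element whose order lies in $R_3(q)\cup R_6(q)$ generates a self-centralising cyclic maximal torus of order $q^2\pm q+1$, both coprime to $3$; hence $3$ is nonadjacent to every vertex of $R_3(q)\cup R_6(q)$. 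Assembling these facts produces exactly the stated nonadjacency condition and the compact form drawn in the lemma.

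The main obstacle is not conceptual but structural: one must verify that the tori of orders $q^2\pm q+1$ are genuinely self-centralising in $G_2(q)$ and that their prime divisors do not appear in any other maximal torus, and likewise that no semisimple centraliser order in $G_2(q)$ is simultaneously divisible by $3$ and by a prime from $R_3(q)\cup R_6(q)$. All of this structural data is tabulated in \cite{VaVd05,VaVd11}, so the proof reduces to translating those tables into the labelled diagram drawn in the statement.
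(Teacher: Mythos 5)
This lemma is not proved in the paper at all: it is quoted verbatim from \cite[Proposition~2.7]{VaVd11} and \cite[Propositions~3.2,~4.5]{VaVd05}, so there is no internal argument to compare against. Your reconstruction from the structure of $G_2(q)$ is essentially the proof that lives in those sources, and it is sound: the factorisation $|G_2(q)|=q^6(q-1)^2(q+1)^2(q^2+q+1)(q^2-q+1)$ gives the five-block partition of $\pi(G)$; the cyclic torus of order $q^2-1$ accounts for all edges inside and between $R_1(q)$ and $R_2(q)$; and the key point, which you state correctly, is that for $r\in R_3(q)\cup R_6(q)$ every element of order $r$ is regular semisimple with centraliser equal to the cyclic maximal torus of order $q^2+\varepsilon q+1$, which is coprime to $3(q^2-1)$ and to the other torus of this shape --- this kills all edges out of $R_3(q)$ and $R_6(q)$, including to the characteristic vertex $3$. (Regularity does need the small check that $r\nmid q^i+1$ for $i\le 3$, which follows from $r\mid q^2+\varepsilon q+1$ and $r\nmid 6q$; without it ``self-centralising'' is an assertion rather than a fact.) Two remarks: first, your closing paragraph defers exactly these structural verifications back to the tables of \cite{VaVd05,VaVd11}, i.e.\ to the propositions being restated, so as written the argument is a citation dressed as a derivation --- which is fine here, since that is also what the paper does. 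Second, you implicitly (and correctly) read the displayed criterion as applying only when $e(r,q)\ne e(s,q)$: taken literally it would make $R_3(q)$ an independent set, whereas the cyclic torus of order $q^2+q+1$ makes it a clique (e.g.\ $7\cdot 13\in\omega(G_2(9))$), as the compact form indicates.
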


\begin{remark} Note that the set $R_1(3)$ is empty.
\end{remark}

\begin{lemm}[{\rm \cite[Proposition~2.7]{VaVd11} and \cite[Propositions~3.2,~4.5]{VaVd05}}]\label{graphE8}
Let $G\cong  E_8(q)$, where $q$ is a power of a prime $p$. Suppose that $r,s\in \pi(G)$ with $r\ne s$. Then $r$ and $s$ are nonadjacent in $\Gamma(G)$ if and only if one of the following conditions holds:
	\begin{enumerate}
  \item [\text{\normalfont(1)}] $r\in\{2,p\}$, $s\ne p$ and $e(s,q)\in\{15,20,24,30\}$.
  \item [\text{\normalfont(2)}] $s,r\not\in\{2,p\}$, $k=e(r,q),~l=e(s,q)$, $1\le k < l$, and either $l = 6$ and $k = 5$, or $l\in \{7, 14\}$ and $k\ge 3$, or $l = 9$ and
$k\ge 4$, or $l\in \{8, 12\}$ and $k\ge 5$, $k\ne 6$, or $l = 10$ and $k\ge 3$, $k\not\in\{4,6\}$, or $l = 18$ and
$k\not\in\{1, 2, 6\}$, or $l = 20$ and $r\cdot k\ne20$, or $l\in \{15, 24, 30\}$.
    \end{enumerate}

In particular, the compact form for $\Gamma(E_8(q))$ is the following. Here, $R (q)= R_1(q) \cup R_2(q) \cup \{p\}$ and the vector from $5$ to $R_4(q)$ and the dotted edge $\{5, R_{20}(q)\}$ indicate that $R_4(q)$ and $R_{20}(q)$ are not adjacent, but if $5\in R_4(q)$ {\rm(}i.e., $q^2\equiv -1 \pmod {5}${\rm)}, then there exist edges between $5$ and the primes from $R_{20}(q)$.
\usetikzlibrary {arrows.meta}
 \centering{
	\begin{tikzpicture}
		\tikzstyle{every node}=[draw,circle,fill=black,minimum size=4pt,
		inner sep=0pt]
		
		\draw (0,0) node (1) [label=below:$R$]{}
		++ (-15:3.0cm) node (18) [label=right:$R_{18}$]{}
        ++ (90:1.6cm) node (5) [label=right:$R_{5}$]{}
        ++ (120:1.4cm) node (3) [label=right:$R_{3}$]{}
        ++ (160:1.4cm) node (8) [label=above:$R_{8}$]{}
        ++ (180:1.9cm) node (12) [label=above:$R_{12}$]{}
        ++ (200:1.4cm) node (6) [label=left:$R_{6}$]{}
        ++ (-125:1.4cm) node (10) [label=left:$R_{10}$]{}
        ++ (-90:1.9cm) node (9) [label=left:$R_{9}$]{}
        ++ (-35:2.2cm) node (14) [label=left:$R_{14}$]{}
        ++ (0:2.9cm) node (7) [label=right:$R_{7}$]{}
        ++ (113:3.9cm) node (4) [label=above:$R_{4}$]{}
        ++ (10:3.7cm) node (55) [label=above:$5$]{}
        ++ (0:1.5cm) node (20) [label=right:$R_{20}$]{}
		++ (-90:1.38cm) node (15) [label=right:$R_{15}$]{}
		++ (-90:1.38cm) node (24) [label=right:$R_{24}$]{}
		++ (-90:1.38cm) node (30) [label=right:$R_{30}$]{}
        (1)--(7)
		(1)--(14)
		(4)--(8)
		(10)--(6)
		(10)--(4)
		(10)--(1)
		(1)--(6)
		(6)--(4)
		(6)--(18)
		(6)--(8)
		(6)--(12)
		(6)--(3)
		(1)--(18)
		(1)--(8)
		(3)--(8)
		(1)--(12)
		(4)--(12)
		(3)--(12)
		(1)--(9)
		(3)--(9)
		(1)--(3)
		(4)--(3)
		(5)--(3)
		(5)--(1)
            (1)--(4)
		(5)--(4);
        \draw[arrows = {-Latex[width=6pt, length=6pt]}]
		(55)--(4);
		\draw[dotted]
		(55)--(20)
		;
	\end{tikzpicture}}
\end{lemm}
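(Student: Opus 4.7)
The plan is to derive both the adjacency criterion and its compact form by specializing the general adjacency criterion of Vasiliev--Vdovin~\cite[Propositions~3.2, 4.5]{VaVd05} (refined in \cite[Proposition~2.7]{VaVd11}) to the group $E_8(q)$. The general criterion says essentially that, given two primes $r,s \in \pi(G)\setminus\{p\}$ with $e(r,q)=k$ and $e(s,q)=l$, the primes $r$ and $s$ are adjacent in $\Gamma(G)$ if and only if there is a maximal torus of $G$ whose order is divisible by both $k_k(q)$ and $k_l(q)$; a separate criterion handles adjacencies with the characteristic~$p$ (governed by Borel--Tits and the structure of parabolic subgroups) and with the prime~$2$.

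First I would enumerate the $e$-values that can arise for primes in $\pi(E_8(q))\setminus\{p\}$. Since the order of $E_8(q)$ factors as
\[
|E_8(q)| = q^{120}\prod_{i\in I} \Phi_i(q)^{a_i},\qquad I=\{1,2,3,4,5,6,7,8,9,10,12,14,15,18,20,24,30\},
\]
Lemma~\ref{zsigdiv} gives $e(r,q)\in I$ for every such $r$, and Lemma~\ref{zsigm} guarantees that each $R_i(q)$ with $i\in I$ is nonempty for $q\ge 2$ (the small exceptions are harmless here). Then for every unordered pair $\{k,l\}\subset I$ I would consult the list of maximal tori of $E_8(q)$ (as tabulated by Deriziotis--Liebeck, and reproduced in \cite{VaVd05}): two primitive prime divisors $r_k,r_l$ are adjacent in $\Gamma(E_8(q))$ exactly when there is a maximal torus whose order is divisible by $\Phi_k(q)\Phi_l(q)$. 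Running through all such pairs yields precisely the non-adjacency list in items (1) and (2).

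For adjacencies involving the prime $p$ and the prime $2$ I would use \cite[Proposition~4.5]{VaVd05}: $p$ is adjacent to $r\in R_l(q)$ if and only if the torus corresponding to $l$ lies in some proper parabolic subgroup, which for $E_8(q)$ fails exactly when $l\in\{15,20,24,30\}$; adjacency of $2$ with $r\in R_l(q)$ (for odd $p$) is decided by whether the torus of order divisible by $\Phi_l(q)$ can be chosen in the centralizer of an involution, and again the obstructions are precisely $l\in\{15,20,24,30\}$. For even $q$, the $p=2$ case is subsumed by the characteristic case. This justifies the joint vertex $R=R_1(q)\cup R_2(q)\cup\{p\}$ in the compact form.

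The last, and most delicate, point is the exceptional edge involving $5$: when $q^2\equiv -1\pmod 5$, the prime $5$ satisfies $e(5,q)=4$, but the formula \eqref{eq:k_i} for $k_{20}(q)$ shows that $5\mid\Phi_{20}(q)/k_{20}(q)$, so $5$ appears in the factor of $|E_8(q)|$ attached to the $R_{20}$-tori as well. Consequently $5$ is adjacent to every prime in $R_{20}(q)$ in this case, and nonadjacent otherwise; I record this with the arrow from $5$ to $R_4(q)$ and the dotted edge to $R_{20}(q)$. The compact form is then read off directly from items~(1)--(2). The main obstacle is purely the bookkeeping: there are on the order of $\binom{17}{2}$ pairs $\{k,l\}$ to check against the maximal-torus list, and the sign condition $q^2\equiv\pm 1\pmod 5$ must be tracked throughout to avoid spurious edges or omissions at the prime $5$.
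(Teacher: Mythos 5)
The paper gives no proof of this lemma at all: it is quoted verbatim from Vasil'ev--Vdovin (\cite[Proposition~2.7]{VaVd11}, \cite[Propositions~3.2,~4.5]{VaVd05}), and your outline is essentially a faithful reconstruction of how those cited propositions are established --- cyclotomic bookkeeping of the possible $e$-values against the maximal-torus list, a separate treatment of the characteristic $p$ and the prime $2$, and the exceptional role of $5$ when $q^2\equiv-1\pmod 5$ coming from $\Phi_{20}(q)/k_{20}(q)=5$ in that case. Your approach therefore matches the source the paper relies on, and I see no substantive gap beyond the (acknowledged) bookkeeping.
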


\section{Almost recognizability of groups $G_2(q)$, $F_4(q)$, and ${^2}F_4(q)$ by Gruenberg--Kegel graph}\label{F4^2F4G2Section}

In this section, we prove Main Theorem for groups $L=F_4(q)$, where $q \ge 2$ is a power of $2$, $L={^2}F_4(q)$, where $q=2^{2m+1}>2$, and $L=G_2(q)$, where $q>3$ is a power of $3$.

\begin{T}\label{F42F4_AlmRec} If $G$ is a group with $\Gamma(G)=\Gamma(L)$, then $L \cong \operatorname{Inn}(L) \unlhd G \le \operatorname{Aut}(L)$. {In particular}, $L$ is almost recognizable by Gruenberg--Kegel graph.
\end{T}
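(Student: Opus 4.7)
The plan is to follow a standard three-step recognition argument: reduce $G$ to an almost simple group modulo its solvable radical via Lemma~\ref{vas}, identify the socle as $L$ using the already-established quasirecognizability of $L$, and finally kill the radical using Lemmas~\ref{Unisingular}, \ref{TiepZalThm}, \ref{Eigenvector1}. Throughout I write $p$ for the defining characteristic of $L$ and assume $\Gamma(G)=\Gamma(L)$.

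\textbf{Reduction to an almost simple group.} From the compact forms in Lemmas~\ref{graphF4}, \ref{graph2F4}, and~\ref{graphG2}, each $\Gamma(L)$ has at least three connected components, and $2$ lies in a component omitting certain sets $R_i(q)$; by Lemma~\ref{zsigm} the relevant $R_i(q)$ are non-empty, so $t(2,L)\geq 2$ and, by exhibiting an explicit coclique of size $\geq 4$ (for instance $\{R_3(q),R_4(q),R_8(q),R_{12}(q)\}$ inside $\Gamma(F_4(q))$), also $t(L)\geq 4$. These inequalities transfer to $G$, so Lemma~\ref{vas} yields the solvable radical $K$ and a nonabelian simple $S$ with $S\unlhd G/K\leq\operatorname{Aut}(S)$, where case $(3.1)$ is excluded by $t(L)>3$. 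The quasirecognizability of $L$ established in~\cite{Khosravi_Babai_2011}, \cite{Akhlaghi_Khatami_Khosravi_2}, and (after the necessary correction of~\cite{Zhang_Shi_Shen}) for $G_2(q)$, $q>3$, forces $S\cong L$, so $L\unlhd G/K\leq\operatorname{Aut}(L)$ and it remains to prove $K=1$.

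\textbf{Killing the radical.} Suppose $K\neq 1$ and take a minimal normal subgroup $N\leq K$ of $G$, so $N$ is elementary abelian $r$-group. Lemma~\ref{semid} lets us replace $G$ by the split extension $N\rtimes(G/N)$ without changing $\Gamma(G)$. The driving mechanism is: if a prime-power element $g\in L$, lifted to $G/N$, fixes a nonzero vector $v\in N$, then $vg$ has order $r\cdot |g|$, so $r$ becomes adjacent in $\Gamma(G)$ to every prime dividing $|g|$; choosing such a $g$ whose order lies in a component of $\Gamma(L)$ disjoint from that of $r$ (available since there are at least three components) produces an edge forbidden by $\Gamma(L)=\Gamma(G)$. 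If $r=p$, Lemma~\ref{Unisingular} applies directly: $F_4(q)$, ${}^2F_4(q)$, and $G_2(q)$ with $q$ odd are all unisingular, so every prime-order element of $L$ of order $\neq p$ fixes a nonzero vector on $N$, forcing $p$ to be adjacent to every other prime of $\pi(L)$; this contradicts the non-adjacency of $p=2$ to $R_8(q)\cup R_{12}(q)$ in the $F_4$ families and of $p=3$ to $R_3(q)\cup R_6(q)$ in $G_2$. If $r\neq p$ and $L\in\{G_2(q),{}^2F_4(q)\}$, then on each non-trivial irreducible cross-characteristic $L$-constituent of $N$ Lemma~\ref{TiepZalThm} gives that any $g\in L$ of prime-power order coprime to $q$ has minimum polynomial of degree $|g|$, so by Lemma~\ref{Eigenvector1} $g$ fixes a nonzero vector in each constituent, and hence in $N$, delivering the forbidden edge. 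The exceptional case $L={}^2F_4(8)$ listed in Lemma~\ref{TiepZalThm} can be dispatched by a direct Brauer-character computation via Lemma~\ref{BrChar}.

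\textbf{Main obstacle.} The remaining scenario $L=F_4(q)$ with $q$ even and $r$ odd is not covered by Lemma~\ref{TiepZalThm}, and this is where the proof will require real work. The plan there is to identify explicit semisimple elements of $F_4(q)$ of orders divisible by primes in $R_8(q)$ or $R_{12}(q)$ and show, by module-theoretic arguments tailored to cross-characteristic irreducible representations of $F_4(q)$ (analogous in spirit to Lemma~\ref{l:r24} for ${}^3D_4(q)$), that such elements must admit $1$ as an eigenvalue. This produces the same fixed-vector conclusion used in the other cases and closes the argument; carrying this out uniformly for all even $q$ is the technical heart of the proof.
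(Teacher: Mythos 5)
Your overall strategy (reduce via Lemma~\ref{vas}, identify the socle, kill the radical) is the same as the paper's, and your treatment of the radical for $L=G_2(q)$, $L={}^2F_4(q)$, and for $r=p$ matches the paper's. But there are two genuine gaps. First, the identification $S\cong L$ cannot simply be cited in two of the cases you pass over: \cite{Khosravi_Babai_2011} treats $F_4(q)$ only for $q>2$, so $F_4(2)$ needs a separate argument (the paper lists, via the tables of narrow prime spectra, all simple groups whose order is divisible by both $13$ and $17$ and eliminates each by adjacency of $\{3,5\}$ with $17$); and \cite{Zhang_Shi_Shen} treats $G_2(q)$ only for $q$ an \emph{odd} power of $3$ --- the ``necessary correction'' concerns $G_2(3)$, not an extension to even powers. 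For $q=3^{2k}$ the paper must run a full case analysis over every simple group whose Gruenberg--Kegel graph has at least three connected components ($E_8(u)$, $F_4(u)$, ${}^2F_4(u)$, ${}^2B_2(u)$, ${}^2G_2(u)$, $PSL_2(u)$, ${}^2D_p(3)$, $A_p$, the sporadics, $E_7(2)$, $E_7(3)$, ${}^2E_6(2)$), using the arithmetic of $q^2\pm q+1$ (Lemmas~\ref{form}, \ref{nagell}, \ref{restrictions}) to eliminate each candidate. This is the bulk of the work in the paper's proof and is entirely absent from yours.

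Second, the case you flag as ``the main obstacle'' --- $L=F_4(q)$ with $K$ a nontrivial $r$-group, $r$ odd --- does not in fact require any new cross-characteristic representation theory for $F_4(q)$. The paper splits it in two: if $r\notin\{2\}\cup R_{12}(q)$, then $F_4(q)$ contains a subgroup ${}^3D_4(q)$ and Lemma~\ref{l:r24} already provides a fixed vector for an element of order $q^4-q^2+1$, making $r$ adjacent to all of $R_{12}(q)$, contrary to Lemma~\ref{graphF4}; if $r\in R_{12}(q)$, then $F_4(q)\ge P\Omega_8^+(q)\ge PSL_2(q^2)\times PSL_2(q^2)$ shows that Sylow $s$-subgroups are non-cyclic for $s\in R_4(q)$, whence $r$ and $s$ are adjacent in $\Gamma(G)$ by \cite[Theorem~10.3.1]{Gorenstein}, again contradicting Lemma~\ref{graphF4}. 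A smaller point: passing to a minimal normal subgroup $N\le K$ leaves $L$ only as a \emph{section} of $G/N$, not a subgroup acting on $N$; the paper instead takes a minimal counterexample $G$, shows that $K$ itself is elementary abelian, and only then invokes Lemma~\ref{semid} to obtain $G=K\rtimes X$ with $L\cong\operatorname{Soc}(X)$ genuinely acting on $K$. You should restructure your reduction accordingly.
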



By Lemmas~\ref{graphF4}, \ref{graph2F4}, and \ref{graphG2}, we see that $t(L)\geq3$ and $t(2,L)\geq2$. It follows from Lemma~\ref{vas} that there exists a nonabelian simple group $S$ such that $S \cong \operatorname{Inn}(S) \leq G/K\leq \operatorname{Aut}(S)$, where $K$ is the solvable radical of $G$. Moreover, by the Thompson theorem on finite groups with fixed-point-free automorphisms of prime order~\cite[Theorem~1]{Thompson}, $K$ is nilpotent. To prove Theorem~\ref{F42F4_AlmRec}, it suffices to show that $S\cong L$ and $K=1$. These two facts are established in the following four lemmas.

\begin{lemm}\label{quasiF42F4} $S \cong L$.
\end{lemm}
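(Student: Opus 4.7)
From Lemmas~\ref{graphF4}, \ref{graph2F4}, and~\ref{graphG2}, inspection of the compact Gruenberg--Kegel graph of $L$ yields $t(L)\geq 3$ and $t(2,L)\geq 2$ in all three families, and in fact $t(L)\geq 4$ unless $L=G_2(q)$. Lemma~\ref{vas} therefore furnishes a nonabelian simple group $S$ with $S\unlhd G/K\leq\operatorname{Aut}(S)$, $t(S)\geq t(L)-1\geq 2$, and, except possibly when $L=G_2(q)$, $t(S)\geq 3$ and $t(2,S)\geq 2$. The plan is to rule out every nonabelian simple candidate other than $L$.

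For $L=G_2(q)$ with $q=3^k>3$, one must first eliminate subcase (3.1) of Lemma~\ref{vas}. By Lemma~\ref{zsigm} the sets $R_3(q)$ and $R_6(q)$ are non-empty, and by Lemma~\ref{graphG2} their elements are isolated vertices of $\Gamma(L)=\Gamma(G)$. Lemma~\ref{vas}(2) then places at least one prime from each of $R_3(q)$ and $R_6(q)$ inside $\pi(S)$, while the arithmetic restrictions on their residues (Lemma~\ref{form}, Lemma~\ref{restrictions}) force them to be large primes with prescribed congruence properties. Comparing these requirements with the small prime spectra of $A_7$ and with the structure of $\Gamma(PSL_2(u))$ for odd $u$ excludes case (3.1). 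Thus in every case we may assume that $S$ is simple with $t(S)\geq 3$, $t(2,S)\geq 2$, and that every prime nonadjacent to $2$ in $\Gamma(G)$ lies in $\pi(S)$.

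It remains to identify $S$ with $L$. The plan is to invoke the classification of nonabelian simple groups with $t(S)\geq 3$ and $t(2,S)\geq 2$ and, for each candidate, to force a contradiction using the distinguished ``isolated'' primes of $\Gamma(L)$: the elements of $R_8(q)$ and $R_{12}(q)$ when $L=F_4(q)$; of $\pi(m_3(n))\cup\pi(m_4(n))\cup\pi(m_5(n))\cup\pi(m_6(n))$ when $L={}^2F_4(q)$; and of $R_3(q)\cup R_6(q)$ when $L=G_2(q)$. By Lemmas~\ref{zsigm}, \ref{calc}, \ref{form}, and~\ref{zsigdiv}, each such prime $r$ has a multiplicative order mod the natural characteristic of $S$ that, combined with the precise adjacency criteria of~\cite{VaVd05, VaVd11} for $\Gamma(S)$, forces the characteristic, type, and rank of $S$ to coincide with those of $L$. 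Sporadic and alternating candidates are eliminated by order comparisons and direct inspection of their known graphs.

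The main obstacle will be distinguishing $L$ from the remaining simple groups of Lie type whose Gruenberg--Kegel graphs have similar shape; one must also rule out the sporadic coincidences of the type $\Gamma(G_2(3))=\Gamma(PSL_2(13))$ noted in the introduction. This is precisely where Lemmas~\ref{nagell} and~\ref{restrictions} enter, tightly constraining the arithmetic of the cyclotomic values $q^2\pm q+1$ and their analogues and preventing such coincidences in the ranges under consideration. When applicable, the published quasirecognizability results of~\cite{Khosravi_Babai_2011, Akhlaghi_Khatami_Khosravi_2, Zhang_Shi_Shen} may also be cited to shorten the argument, with the omissions or errata noted in the introduction being handled by the finitely many remaining direct verifications (notably for $F_4(2)$ and for even-power cases of $G_2(q)$ not covered by the original literature).
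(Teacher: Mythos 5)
Your outline agrees with the paper's at the top level: produce $S$ via Lemma~\ref{vas}, cite the published quasirecognizability results for $F_4(q)$ and ${}^2F_4(q)$ with $q>2$ and for $G_2(q)$ with $q$ an odd power of $3$, and treat $F_4(2)$ and $G_2(3^{2k})$ directly. But the engine you propose for the hard cases has a genuine gap. You plan to ``invoke the classification of nonabelian simple groups with $t(S)\geq 3$ and $t(2,S)\geq 2$'' and then eliminate candidates one by one. That class is not a usable list: by the Vasil$'$ev--Vdovin tables the conditions $t(S)\geq 3$ and $t(2,S)\geq 2$ are satisfied by the overwhelming majority of nonabelian simple groups (all large alternating groups, classical groups of unbounded rank, etc.), so this ``classification'' leaves $S$ essentially unrestricted and gives you no finite case analysis to run. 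The load-bearing step in the paper is different: for $L=G_2(3^{2k})$ the sets $R_3(q)$ and $R_6(q)$ are whole \emph{connected components} of $\Gamma(L)=\Gamma(G)$, and Lemma~\ref{vas} forces them to be connected components of $\Gamma(S)$; hence $s(S)\geq 3$ and $S$ lies on the short Williams--Kondrat$'$ev list (1)--(9) reproduced in the introduction. Only after that reduction does the cyclotomic arithmetic you allude to (Lemmas~\ref{form}, \ref{calc}, \ref{zsigdiv}, \ref{nagell}, \ref{restrictions}) have a finite set of targets; the bulk of the paper's proof is the elimination of those nine families, and the $E_8(u)$ case needs more than arithmetic --- it uses Lemma~\ref{E8nontrivK_R24} to handle $K\neq 1$ and a comparison of coclique sizes via Lemma~\ref{NormalSeriesAdj}. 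Without the component-count reduction your argument cannot terminate.

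For $F_4(2)$ the finiteness mechanism is again concrete and is not the one you suggest: since $13\in R_{12}(2)$ and $17\in R_8(2)$ are isolated in $\Gamma(L)$, one gets $\{13,17\}\subseteq\pi(S)\subseteq\{2,3,5,7,13,17\}$, and Zavarnitsine's table of simple groups with narrow prime spectrum yields exactly eight candidates for $S$, which are then excluded using Buturlakin's descriptions of spectra; an ``order comparison'' or a $t(S)$-based argument plays no role here. Finally, your worry about eliminating subcase (3.1) of Lemma~\ref{vas} is a red herring: $A_7$ and $PSL_2(u)$ reappear anyway as entries of the component list (cases $A_p$ and $A_1(u)$) and are disposed of there by the same arithmetic.
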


\begin{proof}

If $q>2$ is even and $L={^2}F_4(q)$ or $L=F_4(q)$, then Lemma follows from~\cite[Theorem~3.4]{Akhlaghi_Khatami_Khosravi_2} and~\cite[Theorem~3.3]{Khosravi_Babai_2011}, respectively.

Suppose that $L=F_4(2)$. Then $\pi(S)\subseteq\pi(L)=\{2,3,5,7,13,17\}$.
Note that $13\in R_{12}(2)$ and $17\in R_{8}(2)$.
By Lemma~\ref{graphF4}, we find that $13$ and $17$ are nonadjacent to all vertices in $\Gamma(G)$. Therefore, $\{13,17\}\subset \pi(S)$ by Lemma~\ref{vas}.
Inspecting~\cite[Table 1]{Za09}, we see that $S\in\{PSU_4(4)$, $PSU_{3}(17)$, $PSL_2(13^2)$, $PSp_4(13)$, $PSL_3(16)$, $PSp_6(4)$, $P\Omega_{8}^{+}(4)$, $F_4(2)\}$.
Using \cite[Corollary~3]{But_lin}, we find that $5$ and $17$ are adjacent in
$\Gamma(PSL_2(13^2))$ and $\Gamma(PSL_3(16))$, while $3$ and $17$ are adjacent in $\Gamma(PSU_3(17))$ and $\Gamma(PSU_4(4))$.
According to \cite[Corollaries~2--4]{But_ort}, 5 and 17 are adjacent in $\Gamma(PSp_4(13))$,
$\Gamma(PSp_6(4))$, and $\Gamma(P\Omega_{8}^{+}(4))$. This implies that $S\cong F_4(2)$, as claimed.

It remains to consider the case $L\cong G_2(q)$, where $q>3$ is a power of 3.
If $q>3$ is an odd power of $3$, then $S\cong L$ by~\cite[Theorem~1.1]{Zhang_Shi_Shen}. Suppose that $L\cong G_2(q)$, where $q=3^{2k}$ for a positive integer $k$. By Lemma~\ref{graphG2}, sets $R_{3}(q)$ and $R_{6}(q)$ are connected components in $\Gamma(L)$.  It follows from Lemma~\ref{vas} that sets $R_{3}(q)$ and $R_{6}(q)$ are connected components in $\Gamma(S)$ and hence $\Gamma(S)$ has at least three connected components. Therefore, $S$ isomorphic to a group listed in Introduction before Main Theorem.  We show that $S\cong L$ considering each case for $S$ separately.
We will extensively use that $k_3(q)=\frac{q^2+q+1}{(q-1,3)}=q^2+q+1$ and $k_6(q)=\frac{q^2-q+1}{(q+1,3)}=q^2-q+1$ (see equation~(\ref{eq:k_i})).

{\it Case $S\cong A_p$, where $p>6$ and both $p$ and $p-2$ are primes}. The connected components of $\Gamma(S)$ are $\pi_1(S)$, $\{p\}$, and $\{p-2\}$. We know that $R_3(q)$ and $R_6(q)$ are connected components in $\Gamma(S)$, so either $p-2\in R_3(q)$ or $p-2\in R_6(q)$. It follows from Lemma~\ref{form} that $p-3$ is divisible by $3$; a contradiction since $p\neq3$.

{\it Case $S\cong E_8(u)$, where $u$ is a prime power}.
Since $3^{2k}\equiv(-1)^k\pmod{5}$, we infer that $q \equiv \pm 1 \pmod{5}$.
This implies that there exists $\varepsilon\in\{+,-\}$ such that
$q^2+\varepsilon{q}+1 \equiv 3 \pmod{5}$.
Take any prime divisor $r$ of $k_3(\varepsilon{q})=q^2+\varepsilon{q}+1$
such that $r \not \equiv 1 \pmod{5}$. Since $r\in R_3(\varepsilon{q})$,
Lemma~\ref{graphE8} implies that $r\in R_j(u)$, where $j\in\{15,20,24,30\}$.
Then $r-1$ is divisible by $j$ and hence $j=24$. Since $R_{24}(u)$ is a connected component in $\Gamma(S)$, we find that $R_3(\varepsilon{q})=R_{24}(u)$.
If $K\neq 1$, then Lemma~\ref{E8nontrivK_R24} implies that $R_{24}(u)\subseteq\pi_1(G)$; a contradiction since $r\not\in\pi_1(G)$. Assume that there exists $s\in\pi(\overline{G}/S)$ such that $s>3$. It follows from Lemma~\ref{vas} that $s\in\pi_1(G)$. By \cite[Theorem~2.5.12, Definition~2.5.13]{GoLySo98}, there exists an element $g\in G \setminus S$ such that $|g|=s$ and $g$ acts on $S$ as a field automorphism. It is known that $C_S(g)\cong E_8(u^{1/s})$ (see, e.g.,~\cite[Proposition~4.9.1]{GoLySo98}). By Lemma~\ref{calc}, we see that $R_{24}(u^{1/s})\subseteq R_{24}(u)$ and hence $s$ is adjacent to $r$ in $\Gamma(G)$; we arrive at a contradiction since $s\in\pi_1(G)$ and $r\not\in\pi_1(G)$.
Now consider a coclique $\rho=\{s_1,s_2,\ldots, s_{12}\}$ of maximal size in $\Gamma(S)$. By Lemma~\ref{graphE8}, we have $(s_i,6)=1$ for every $i\in\{1,\ldots,12\}$. Applying Lemma~\ref{NormalSeriesAdj}, we find that $t(G) \ge 12$; a contradiction with Lemma~\ref{graphG2}.



{\it Cases $S\in \{PSU_6(2), PSL_3(4), M_{11}, M_{23}, M_{24}, J_3, HiS, Suz, Co_2, Fi_{23}, F_2, M_{22}, Fi_{24}', F_1\}$}.
By Lemma~\ref{vas}, we infer that $\{2,r_3(q),r_6(q)\}$ is a coclique in $\Gamma(S)$.
It follows from Lemma~\ref{form} that $r_3(q)\equiv 1\pmod{6}$ and $r_6(q)\equiv 1\pmod{6}$.
Using \cite{Atlas}, we see that there is no such a coclique in $\Gamma(S)$; contradiction.

{\it Case $S\cong F_4(u)$, where $u$ is even}. Since $q^2+q+1\equiv3\pmod{8}$, there exists   $r\in R_3(q)$ such that $r\not\equiv 1\pmod{8}$.
By Lemmas~\ref{vas},~\ref{graphF4}, and \ref{form}, we infer that $r\in R_{12}(u)$.
Then $\pi(q^2+q+1)=R_{12}(u)$. On the other hand, we have $q^2+q+1\equiv 7\pmod{12}$ and hence there exists $s\in\pi(q^2+q+1)$ such that $s\not\equiv1\pmod{12}$; a contradiction with Lemma~\ref{form}.

{\it Case $S\cong {}^2F_4(u)$, where $u=2^{2m+1}>2$}. By Lemma~\ref{graph2F4}, it is true that $\pi_2(S)\cup\pi_3(S)=\pi(u^4-u^2+1)\subseteq \pi(u^6+1)$. Consider any prime $r\in R_3(q)$. Then
$r\in\pi_2(S)\cup\pi_3(S)$ and $r$ divides $(u^3)^2+1$. Since $(u^3)^2\equiv -1\pmod{r}$, we find that $-1$ is a quadratic residue modulo $r$. This implies that $r\equiv1\pmod{4}$.
Since $r$ is an arbitrary element of $R_3(q)$, we infer that $q^2+q+1\equiv 1\pmod{4}$.
On the other hand, $q^2+q+1 \equiv3^{4k}+3^{2k}+1\equiv3\pmod{4}$; a contradiction.

{\it Case $S\cong {}^2B_2(u)$, where $u=2^{2m+1}>2$.} According to \cite[Table~3]{ak}, we can assume that $\pi_2(S)=\pi(u-1)$, $\pi_3(S)=\pi(u-\sqrt{2u}+1)$, and $\pi_4(S)=\pi(u+\sqrt{2u}+1)$. Note that $\pi(u-\sqrt{2u}+1)\cup\pi(u+\sqrt{2u}+1)=\pi(u^2+1)$. If $R_3(q)\neq\pi_2(S)$, then
$R_3(q)\subseteq\pi(u^2+1)$ and we get a contradiction arguing as in the case $S\cong {}^2F_4(u)$.
Therefore, we can assume that $R_3(q)=\pi(u-1)$.  Suppose that $r~|~u-1=2^{2m+1}-1$. Then $r~|~2^{2n+2}-2$ and hence $2$ is a quadratic residue modulo $r$. This implies that $r\equiv \pm1\pmod{8}$. It follows that $q^2+q+1\equiv\pm1\pmod8$; a contradiction since
$q^2+q+1\equiv3^{4k}+3^{2k}+1\equiv3\pmod8$.

{\it Case $S\cong {^2}G_2(u)$, where $u=3^{2m+1}>3$}. According to \cite[Table~2]{ak}, we can assume that $\pi_2(S)=\pi(u-\sqrt{3u}+1)$ and $\pi_3(S)=\pi(u+\sqrt{3u}+1)$. Take any prime $r\in R_{12k}(3)$.
By Lemma~\ref{calc}, we infer that $r\in R_6(q)$. We know that $R_6(q)\subseteq\pi_2(S)\cup\pi_3(S)=\pi(u^2-u+1)=R_6(u)$. By Lemma~\ref{zsigdiv}, we conclude that $12k~|~6\cdot(2m+1)$; a contradiction.

{\it Case $S\cong {^2}D_p(3)$, where $p=2^{m}+1\geq3$ is prime}. According to \cite[Table~2]{ak}, we can assume that $\pi_2(S)=\pi((3^{p-1}+1)/2)$ and $\pi_3(S)=\pi((3^{p}+1)/4)$. Take any $r\in R_{12k}(3)$. By Lemma~\ref{calc}, we infer that $r\in R_6(q)$. Since $r\in\pi_2(S)\cup\pi_3(S)$,
we find that $r$ divides $3^{2(p-1)}-1$ or $3^{2p}-1$. It follows from Lemma~\ref{zsigdiv} that $12k~|~2(p-1)$ or $12k~|~2p$; a contradiction.

{\it Case $S\cong A_1(u) \cong PSL_2(q)$, where $u=2^m>2$}. According to \cite[Table~2]{ak}, we can assume that $\pi_2(S)=\pi(u-1)$ and $\pi_3(S)=\pi(u+1)$. Note that 3 divides $u^2-1$.
On the other hand, $\pi(u^2-1)=\pi_2(S)\cup\pi_3(S)=\pi_2(G)\cup\pi_3(G)$; a contradiction since $3\in\pi_1(G)$.

{\it Case $S\cong A_1(u)\cong PSL_2(u)$, where $3<u=v^n$ and $u$ is odd}. Consider $\varepsilon\in\{+,-\}$ such that $u\equiv\varepsilon 1\pmod4$. According to~\cite[Table~2]{ak}, we can assume that $\pi_1(S)=\pi(u-\varepsilon 1)$, $\pi_2(S)=\{v\}$, and $\pi_3(S)=\pi(\frac{u+\varepsilon 1}{2})$. Therefore, there exists $\tau\in\{+,-\}$ such that $\pi(q^2+\tau{q}+1)=\{v\}$ and $\pi(q^2-\tau{q}+1)=\pi(\frac{u+\varepsilon 1}{2})$.
Moreover, we know that $\pi(u-\varepsilon 1)\subseteq\pi(3(q^2-1))$. Since $q^2-q+1=(q-1)^2+(q-1)+1$,
Lemma~\ref{nagell} implies that either $q^2+\tau{q}+1=v$
or $q=19$ and $v=7$. By assumption, $q=3^{2k}$ and hence
$q^2+\tau{q}+1=v$. Then $v-1$ is divisible by $9$.
Therefore, $v\geq19$ and $3\in\pi(\frac{u-1}{2})\setminus\pi(q^2-\tau{q}+1)$. This implies that $\varepsilon=+$ since, by Lemma~\ref{graphG2}, $3 \in \pi_1(G)$.

Suppose that $n$ is even. Then $v^2-1$ divides $u-1$,
so $\pi(v+1)\subseteq\pi(q^2-1)$. Take any $r\in\pi(v+1)$. Since $q\equiv\pm1\pmod{r}$ and $r$ divides $q^2+\tau{q}+2$, we find that $r$ divides $3\pm1$ and hence $r=2$. Therefore,
$q^2+\tau{q}+2$ is a power of 2. On the other hand,
$q^2+\tau{q}+2\equiv1\pm1+2\pmod{8}$. This implies that $q^2+\tau{q}+2\leq 4$; a contradiction.

We can assume that $n$ is odd.
Then $\frac{v+1}{2}$ divides $\frac{u+1}{2}$. Take any
$r\in\pi(\frac{v+1}{2})$. Then $r$ divides both $q^2+\tau{q}+2$ and $q^2-\tau{q}+1$. Therefore,
$2\tau{q}\equiv-1\pmod{r}$ and hence $0\equiv 4q^2+4\tau{q}+8\equiv 4\tau{q}+9\equiv7\pmod{r}$.
This implies that $r=7$ and $v+1=2\cdot 7^m$ for a positive integer $m$. Since $q\equiv1\pmod8$
and $q^2+\tau{q}+2=2\cdot 7^m$, we infer that $3+\tau1\equiv 2\cdot(-1)^m\pmod{8}$.
This implies that $\tau=-1$ and $m$ is even. A straightforward calculation shows that $9^{2k}-9^k+2$ is not divisible by 49 for all positive integer $k$; a contradiction.

{\it Cases $S\in \{F_3, O'N, J_1, LyS, J_4, {}^2E_6(2), E_7(2), E_7(3)\}$}. According to \cite[Tables~2, 3]{ak}, we see that there exist primes $r$ and $s$ such that $\pi(q^2+q+1)=\{r\}$ and $\pi(q^2-q+1)=\{s\}$.
Applying Lemma~\ref{restrictions}, we find that $r\equiv s\equiv1\pmod{6}$
and the remainders of $r$ and $s$ divided by 8 belong to the set $\{1,3\}$.
Inspecting \cite[Tables~2, 3]{ak}, we see that in each case there are no two primes satisfying these restrictions; a contradiction.

Thus, we conclude that $S \cong G_2(u)$, where $u$ is a power of $3$. Now Lemma~\ref{zsigm} implies immediately that $u=q$ and, therefore, $S\cong L$. This completes the proof of the lemma.

\end{proof}

\noindent{\bf Remark.} For $q=3$, the paper~\cite{Zhang_Shi_Shen} contains a mistake, and $S \in \{G_2(3), PSL_2(13)\}$ by~\cite[Table~1]{AmKhos}.

\medskip

Show that $K=1$. Consider a minimal (by order) counterexample $G$  to this claim.

\begin{lemm}\label{elemabel}
$K$ is an elementary abelian $r$-group for some $r \in \pi_1(L)$.
\end{lemm}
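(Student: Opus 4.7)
The plan is to prove the lemma by four successive reductions exploiting the minimality of $G$. The pivotal observation I will use throughout is the following: for any characteristic subgroup $N$ of $K$, the quotient $\overline{G} = G/N$ has solvable radical exactly $K/N$ (because $\overline{G}/(K/N) \cong G/K$ is almost simple with socle $L$, hence has trivial solvable radical) and satisfies $\Gamma(\overline{G}) = \Gamma(L) = \Gamma(G)$. Indeed $\Gamma(\overline{G}) \subseteq \Gamma(G)$ since adjacencies descend through quotients, while $\Gamma(L) = \Gamma(G/K) \subseteq \Gamma(\overline{G})$ because $G/K$ is itself a quotient of $\overline{G}$, and the vertex sets agree as $\pi(L) \subseteq \pi(\overline{G}) \subseteq \pi(G) = \pi(L)$. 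Consequently, whenever $K/N \neq 1$ the group $\overline{G}$ is a strictly smaller counterexample to the claim $K = 1$, contradicting the minimality of $G$. I will apply this to three characteristic subgroups of $K$ in turn: $N = O_{p'}(K)$ for some $p \in \pi(K)$ (forcing $|\pi(K)| = 1$, so that $K$ is a $p$-group), $N = [K, K]$ (forcing $K$ abelian), and $N = \Phi(K) = K^p$ (forcing $K$ elementary abelian). Since $\pi(K) \subseteq \pi(G) = \pi(L)$, the prime $p$ lies in $\pi(L)$.

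To conclude $p \in \pi_1(L)$, I will argue by contradiction, supposing $p \in \pi_j(L)$ for some $j \geq 2$. The defining characteristic $\ell$ of $L$ lies in $\pi_1(L)$ by inspection of Lemmas~\ref{graphF4}, \ref{graph2F4}, and \ref{graphG2}, so $p \neq \ell$. Let $\widetilde{S}$ be the preimage of $S \cong L$ in $G$; then $\widetilde{S}/K \cong L$ acts on $K$ by conjugation, and by simplicity of $L$ this action is either trivial or faithful. If trivial, then $K \leq Z(\widetilde{S})$, and Schur--Zassenhaus produces a preimage $\tilde{s} \in \widetilde{S}$ of order $t$ for each $s \in L$ of prime order $t \neq p$; the product $\tilde{s}k$ for a nontrivial $k \in K$ then has order $pt$, yielding $p \sim t$ for every $t \in \pi(L) \setminus \{p\}$---in particular $p \sim \ell$, a contradiction. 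If the action is faithful, $K$ must have a nontrivial composition factor as an $\mathbb{F}_p L$-module (otherwise $L$ would embed in a unipotent subgroup of $GL(K)$, a $p$-group, impossible for a non-abelian simple group). I will then invoke Lemmas~\ref{TiepZalThm} and~\ref{Eigenvector1}---directly for $L \in \{G_2(q), {}^2F_4(q)\}$, and for $L = F_4(q)$ via its subgroup ${}^3D_4(q)$, which also acts faithfully on $K$ by the same simplicity argument applied to $F_4(q)$. For any prime $t \in \pi(L) \setminus \{\ell, p\}$ (respectively $t \in \pi({}^3D_4(q)) \setminus \{\ell, p\}$ in the $F_4$ case), an element of order $t$ fixes a nonzero vector on a nontrivial composition factor; semisimplicity of the $\langle s \rangle$-action on $K$ (since $\gcd(t, p) = 1$) propagates this to a fixed vector on $K$ itself, and the product construction again produces an element of order $pt$ in $\widetilde{S}$. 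Picking $t$ in $\pi_1(L) \setminus \{\ell, p\}$, which is non-empty in every case by inspection of the component descriptions, forces $p \in \pi_1(L)$, contradicting the assumption.

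The principal obstacle is the faithful-action case for $L = F_4(q)$, which is not in the list of groups covered by Lemma~\ref{TiepZalThm}; the ${}^3D_4(q)$-descent is the natural workaround, and one need only verify that $\pi({}^3D_4(q)) \cap \pi_1(F_4(q))$ contains a prime distinct from $p$ and $\ell$, which is routine from the explicit component descriptions. The isolated exceptional configuration $({}^2F_4(8),\, l = 3,\, |g| = 109)$ flagged in Lemma~\ref{TiepZalThm} also deserves a brief separate check but does not threaten the argument, since the conclusion $p \sim t$ is obtained for a range of primes $t$ and the failure of a single prime causes no harm.
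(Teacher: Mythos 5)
Your proof is correct, and its first half is essentially the paper's argument: the paper also takes a minimal counterexample and kills a characteristic subgroup of $K$, just in a single step (it factors out $N=U\times\Phi(P)$, where $K=P\times U$ with $P\in\syl_r(K)$, so that $K/N\cong P/\Phi(P)$ is already elementary abelian), whereas you do it in three successive steps $O_{p'}(K)$, $[K,K]$, $K^p$; the net effect is identical. Where you genuinely diverge is the clause $r\in\pi_1(L)$. The paper's proof of this lemma is silent on that point and implicitly relies on the one-line argument it spells out in the proof of Lemma~\ref{E8nontrivK_R24}: since Sylow $2$-subgroups of $L$ are non-cyclic (and not generalized quaternion), a Sylow $2$-subgroup of $G$ cannot act fixed-point-freely on the nontrivial normal $r$-subgroup $K$ for $r\neq 2$ (\cite[Theorem~10.3.1]{Gorenstein}), so $2r\in\omega(G)$ and $r\in\pi_1(G)=\pi_1(L)$. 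Your alternative — splitting into the central and faithful cases, using Schur--Zassenhaus in the former and a nontrivial composition factor plus Lemmas~\ref{TiepZalThm} and~\ref{Eigenvector1} (descending to ${}^3D_4(q)$ for $F_4(q)$) in the latter — is sound, including the Maschke-type propagation of fixed vectors from a composition factor to $K$ and the observation that the ${}^2F_4(8)$ exception is harmless (the prime $109$ lies outside $\pi_1$ there anyway). But it is considerably heavier than needed for this clause: you are essentially front-loading the fixed-point machinery that the paper deploys only afterwards to prove $K=1$, and in doing so you come close to proving the stronger statement outright. The trade-off is that your version is self-contained, while the paper's is shorter but leaves the $\pi_1$ claim to a standard fact cited only elsewhere in the text.
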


\begin{proof}

Let $r$ be a prime divisor of $|K|$. Since $K$ is nilpotent,
we have a decomposition $K=P \times U$, where $P$ is a Sylow $r$-subgroup of $K$ and
$U$ is a normal subgroup of $K$ such that $r\not\in\pi(U)$.
Since $U$ and $\Phi(P)$ are characteristic subgroups of $K$, the subgroup
$N=U \times \Phi(P)$ is a normal subgroup of $G$. Then $\Gamma(G/N)$ is a subgraph of $\Gamma(G)$. On the other hand, $\Gamma(L)$ is a subgraph of $\Gamma(G/N)$ and hence $\Gamma(G/N)=\Gamma(L)$. By the minimality of $G$, we infer that $N=1$ and, therefore, $K$ is an elementary abelian $r$-group.
\end{proof}


By~Lemmas~\ref{semid},~\ref{quasiF42F4} and~\ref{elemabel}, we can assume that $G=K \rtimes X$, where $K$ is an elementary abelian $r$-group for $r \in \pi_1(L)$ and $Soc(X)=S \cong L$.

\begin{lemm} $K=1$ if $L=F_4(q)$.
\end{lemm}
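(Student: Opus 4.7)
The plan is to argue by contradiction. Assume $K\neq 1$. By Lemma~\ref{elemabel} together with Lemma~\ref{semid}, we may take $G=K\rtimes X$ with $K$ an elementary abelian $r$-group for some prime $r\in\pi_1(L)$ and $S=\operatorname{Soc}(X)\cong L=F_4(q)$. For $q=2^k\geq 2$, Lemma~\ref{zsigm} supplies primes in both $R_8(q)$ and $R_{12}(q)$, and all such primes are odd (since $q^4+1$ and $q^4-q^2+1$ are odd). The strategy is uniform across cases: exhibit an element $g\in S$ of prime order $s\in R_8(q)\cup R_{12}(q)$ that fixes a nonzero vector $v\in K$. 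Then $v$ and $g$ commute and have coprime orders, so $vg$ has order $rs$ in $G$, giving $r\sim s$ in $\Gamma(G)=\Gamma(L)$. This contradicts Lemma~\ref{graphF4}, according to which $R_8(q)$ and $R_{12}(q)$ are isolated components of $\Gamma(L)$ while $r$ sits in the main one. The parity of $r$ dictates how the fixed vector is produced.

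\emph{Case $r=2$.} Lemma~\ref{Unisingular}(iv) states that $F_4(q)$ is unisingular, so every element of $S$ has a nonzero fixed point on the abelian $2$-group $K$. I would pick any prime $s\in R_8(q)$ and any element $g\in S$ of order $s$ and conclude as above; the non-edge between $2$ and $s$ in $\Gamma(L)$ is given by Lemma~\ref{graphF4}(1).

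\emph{Case $r$ odd.} Here $r$ is coprime to $q$ and lies in the main component, so $e(r,q)\in\{1,2,3,4,6\}$. One cannot appeal to unisingularity, and instead I would pass through the subgroup embedding ${}^3D_4(q)\leq F_4(q)$ arising from the $D_4$-subsystem of $F_4$ together with its triality. Choose a prime $s\in R_{12}(q)$ and an element $h\in{}^3D_4(q)$ of order $n=q^4-q^2+1$, taken from a cyclic $\Phi_{12}$-torus. Since $r\nmid q$, Lemma~\ref{l:r24} applied to the $\mathbb{F}_r S$-module $K$ restricted to $^3D_4(q)$ guarantees a nonzero $v\in K$ fixed by $h$. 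Then $g:=h^{n/s}$ has order $s$ and still fixes $v$, and Lemma~\ref{graphF4}(2) furnishes the required non-edge between $r$ and $s$ since $e(r,q)\leq 6<12=e(s,q)$.

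The main obstacle I anticipate is purely a matter of bookkeeping: one must be explicit that ${}^3D_4(q)\leq F_4(q)$ is a well-known subgroup of maximal rank (standard in the structure theory of exceptional groups) and that Lemma~\ref{l:r24} applies to the restriction to $^3D_4(q)$ of the $S$-action on $K$ (the hypothesis of the lemma permits trivial actions, so no separate argument for a potential kernel is required). Once this setup is recorded, both cases reduce to a one-line application of the cited lemmas.
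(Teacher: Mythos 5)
Your proof is correct and follows essentially the same route as the paper: unisingularity of $F_4(q)$ (Lemma~\ref{Unisingular}) for $r=2$, and the subgroup ${}^3D_4(q)\le F_4(q)$ together with Lemma~\ref{l:r24} for odd $r$. The one point of divergence is the case $r\in R_{12}(q)$ (and likewise $R_8(q)$): you exclude it a priori by invoking the clause $r\in\pi_1(L)$ in the statement of Lemma~\ref{elemabel}, which is legitimate since that lemma precedes this one, whereas the paper does not rely on that clause here and instead treats $r\in R_{12}(q)$ as a separate case, passing to $P\Omega_8^+(q)\ge PSL_2(q^2)\times PSL_2(q^2)$ inside $F_4(q)$ to see that Sylow $s$-subgroups are non-cyclic for $s\in R_4(q)$, which forces an edge $\{r,s\}$ by the coprime-action fixed-point argument and contradicts Lemma~\ref{graphF4}. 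Your shortcut buys a shorter, two-case proof at the cost of leaning on the $\pi_1(L)$-membership of $r$ (itself resting on the standard non-cyclic-Sylow-$2$ argument), while the paper's version is self-contained on this point. Note that excluding $r\in R_{12}(q)$ is genuinely necessary for your argument: for $q$ even every prime divisor of $q^4-q^2+1$ lies in $R_{12}(q)$, so for such $r$ Lemma~\ref{l:r24} would only produce edges inside the clique $R_{12}(q)$, which is no contradiction.
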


\begin{proof} According to~\cite[Table~5.1]{LiSaSe92}, $S$ has a subgroup $H$ isomorphic to ${^3}D_4(q)$.

Assume that $r \not \in \{2\}\cup R_{12}(q)$. By Lemma~\ref{l:r24}, $r$ is adjacent to each element from $R_{12}(q)$ in $\Gamma(KH)$. It follows from Lemma~\ref{graphF4} that $\Gamma(G) \not = \Gamma(L)$; a contradiction.

Assume that $r=2$. By Lemma~\ref{Unisingular}, $S$ is unisingular and, therefore, $2$ is adjacent to all other vertices in $\Gamma(KS)$. We arrive at a contradiction with Lemma~\ref{graphF4}.

Assume that $r \in R_{12}(q)$. Since $q$ is even, by~\cite[Table~5.1]{LiSaSe92}, $S$ has a subgroup $H_1 \cong P\Omega_8^+(q)$. Now by \cite[Table~8.50]{BrHoDo13}, $H_1$ has a subgroup $H_2 \cong P\Omega_4^-(q)\times P\Omega_4^-(q) \cong PSL_2(q^2) \times PSL_2(q^2)$. Therefore, for each $s\in R_4(q)$, a Sylow $s$-subgroup of $S$ is non-cyclic. This implies that $r$ and $s$ are adjacent in $\Gamma(KH_2)$ (see, for example, \cite[Theorem~10.3.1]{Gorenstein}).
Therefore, $r$ and $s$ are adjacent in $\Gamma(G)$; a contradiction with Lemma~\ref{graphF4}. Thus, $K=1$.
\end{proof}

\begin{lemm} $K=1$ if $L \cong G_2(q)$ for $q>3$ or $L \cong {^2}F_4(q)$ for $q>2$.
\end{lemm}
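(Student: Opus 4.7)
The plan is to derive a contradiction from the assumption $K \neq 1$, using the standing decomposition $G = K \rtimes X$ with $K$ an elementary abelian $r$-group for $r \in \pi_1(L)$ and $\operatorname{Soc}(X) = S \cong L$. Let $p$ denote the defining characteristic of $L$, so $p = 3$ for $L = G_2(q)$ and $p = 2$ for $L = {^2}F_4(q)$. I will exhibit in $\Gamma(G)$ an edge between $r$ and a prime $s$ lying in another connected component of $\Gamma(L)$, contradicting $\Gamma(G) = \Gamma(L)$.

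The first case is $r = p$. In both situations $S \cong L$ is unisingular by Lemma~\ref{Unisingular} (parts (iv) and (v); note that $q$ is odd when $L = G_2(q)$), so every element of $S$ fixes a non-zero vector of $K$. Hence $r$ is adjacent in $\Gamma(G)$ to every other prime in $\pi(S)$, in particular to every prime of $\pi_2(L) \cup \pi_3(L)$; this contradicts the non-adjacencies recorded in Lemmas~\ref{graphG2} and~\ref{graph2F4}.

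In the remaining case $r \neq p$ the prime $r$ is coprime to $q$. I would pick $s$ in a component of $\Gamma(L)$ other than the one containing $r$: for $L = G_2(q)$ any prime $s \in R_3(q) \cup R_6(q)$ works, and for $L = {^2}F_4(q)$ any $s \in \pi(q^2 + \sqrt{2q^3} + q + \sqrt{2q} + 1) \cup \pi(q^2 - \sqrt{2q^3} + q - \sqrt{2q} + 1)$ does; in both subcases $s$ is automatically a prime coprime to $q$. For any $g \in S$ of order $s$, I would decompose $K$ as a direct sum of irreducible $\mathbb{F}_r S$-submodules and argue summand by summand. On a trivial summand $g$ fixes every vector; on a non-trivial irreducible summand $V$ with representation $\phi$, the hypotheses of Lemma~\ref{TiepZalThm} are satisfied because $|g| = s$ is a prime coprime to $q$ and the characteristic $r$ of $\mathbb{F}_r$ is also coprime to $q$, so the minimum polynomial degree of $\phi(g)$ equals $s$, and Lemma~\ref{Eigenvector1} then yields a non-zero vector of $V$ fixed by $g$. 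Either way there is some $k \in K$ with $k \neq 1$ centralising $g$, whence $gk \in G$ has order $rs$; thus $r$ and $s$ are adjacent in $\Gamma(G)$, yielding the required contradiction.

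The main technical obstacle I expect is the exception in Lemma~\ref{TiepZalThm}, which may fail when $L = {^2}F_4(8)$, the field characteristic is $3$, and the element order is $109$. Since $109 \in \pi(q^2 + \sqrt{2q^3} + q + \sqrt{2q} + 1)$ only for $q = 8$, this issue affects at most the subcase $r = 3$ in $L = {^2}F_4(8)$, and I would sidestep it by instead choosing $s = 37 \in \pi(q^2 - \sqrt{2q^3} + q - \sqrt{2q} + 1)$, which by Lemma~\ref{graph2F4} is non-adjacent to $3$ in $\Gamma({^2}F_4(8))$ and to which Lemma~\ref{TiepZalThm} applies without exception.
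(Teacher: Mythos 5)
Your argument is correct and follows essentially the same route as the paper: unisingularity (Lemma~\ref{Unisingular}) disposes of $r=p$, and Lemmas~\ref{TiepZalThm} and~\ref{Eigenvector1} produce a fixed vector, hence a forbidden edge, when $r\neq p$; your explicit treatment of the ${}^2F_4(8)$ exception via $s=37$ is a welcome precision that the paper leaves implicit. One small repair: since $r\in\pi_1(L)=\pi(S)$ divides $|S|$, the $\mathbb{F}_rS$-module $K$ need not be semisimple, so instead of a direct-sum decomposition you should take a single irreducible submodule $V\le K$ (which always exists) and run the trivial/non-trivial dichotomy on it.
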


\begin{proof}
Let $p=2$ if $L \cong {^2}F_4(q)$ and $p=3$ if $L \cong G_2(q)$. By Lemma~\ref{Unisingular}, $L$ is unisingular and, therefore, if $r=p$, then $\Gamma(G)$ is connected; a contradiction.
Therefore, $r \in \pi_1(L) \setminus \{p\}$. By Lemmas~\ref{Eigenvector1} and~\ref{TiepZalThm}, $r$ is adjacent to a prime from $\pi_2(L)$; a contradiction. Thus, $K=1$.
\end{proof}

This completes the proof of Theorem~\ref{F42F4_AlmRec}.

\medskip

Since the group $L$ is known to be almost recognizable, the following natural question arises.

\begin{prob}
Suppose that $L$ is a group from the statement of Theorem~{\rm\ref{F42F4_AlmRec}}.
Find a positive integer $k$ such that $L$ is $k$-recognizable by Gruenberg-Kegel graph.
\end{prob}


\section{Unrecognizability of groups $^2B_2(q)$ and $G_2(3)$ by Gruenberg--Kegel graphs }\label{B2Section}

In this short section, we show that groups $^2B_2(q)$ with $q>2$ and $G_2(3)$ are
unrecognizable.

\begin{prop}\label{2B2} Let $G={^2}B_2(q)$, where $q>2$ is an odd power of $2$. Then $G$ is unrecognizable by Gruenberg--Kegel graph.
\end{prop}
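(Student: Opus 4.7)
The plan is to exhibit infinitely many pairwise non-isomorphic groups with the same Gruenberg--Kegel graph as $G$, by invoking Lemma~\ref{semid}. Recall (see \cite[Table~3]{ak}) that $\Gamma(G)$ has four connected components $\{2\}$, $\pi(q-1)$, $\pi(q-\sqrt{2q}+1)$, $\pi(q+\sqrt{2q}+1)$, with the vertex $2$ isolated. I will construct a non-trivial $\mathbb{F}_2 G$-module $V$ such that $C_V(g)=0$ for every $g \in G$ of odd prime order; then the semidirect products $H_k := V^k \rtimes G$ for $k\geq 1$ have pairwise distinct orders $|V|^k |G|$ (hence are pairwise non-isomorphic), and all satisfy $C_{V^k}(g)=C_V(g)^k=0$ for such~$g$.

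Given such a module $V$, one verifies $\Gamma(H_k)=\Gamma(G)$ as follows. The vertex sets agree because $2\in \pi(G)$; Lemma~\ref{NormalSeriesAdj} applied to $V^k \unlhd H_k$ reproduces every edge of $\Gamma(G)$ among odd primes; and the vertex $2$ remains isolated because no element of $H_k$ has order $2s$ for any odd prime $s\in \pi(G)$. The last claim is checked by writing a putative element $x=vg$ with $v\in V^k$, $g\in G$, and splitting according to $|g|\in\{1,2,s\}$ (using $2s \notin \omega(G)$): the cases $|g|\in\{1,2\}$ are excluded because $V^k$ has exponent $2$ (so $x^2 \in V^k$ forces $|x|\leq 4$), while the case $|g|=s$ reduces via $x^s = \bigl(\sum_{i=0}^{s-1} g^i\bigr)(v)$ to the observation that, for odd $s$ coprime to the characteristic~$2$, one has $V^k = C_{V^k}(g) \oplus [V^k,g]$ with the norm map acting as the invertible scalar $s$ on the first summand and as $0$ on the second, so its image is $C_{V^k}(g)=0$.

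For $V$, I take the natural $4$-dimensional $\mathbb{F}_q G$-module coming from the Suzuki embedding $G\leq Sp_4(q)$, viewed as an $\mathbb{F}_2 G$-module of dimension $4(2n+1)$, where $q=2^{2n+1}$. Every odd-order element of $G$ is semisimple and lies in a maximal torus of order $q-1$, $q-\sqrt{2q}+1$, or $q+\sqrt{2q}+1$. Over $\overline{\mathbb{F}_q}$, the split torus $\mathbb{F}_q^*$ acts on $V$ with eigenvalues $t^{\pm 1}$ and $t^{\pm 2^{n+1}}$, and a non-split torus element $\tau$ of order dividing $q^2+1$ acts with eigenvalues $\tau^{\pm 1}$ and $\tau^{\pm q}$. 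For a non-identity element of odd prime order $s$, none of these eigenvalues equals $1$: $s\nmid 1$ trivially, and $s\nmid 2^{n+1}$ and $s\nmid q$ because $s$ is odd. Therefore $C_V(g)=0$ for every odd-order $g\in G$, as required.

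The main obstacle is the eigenvalue description of the non-split tori on $V$: this follows from $G\leq Sp_4(q)$ together with the fact that the characteristic polynomial of $\tau$ has coefficients in $\mathbb{F}_q$, forcing its eigenvalues to form a Frobenius orbit $\{\tau,\tau^q,\tau^{q^2},\tau^{q^3}\}=\{\tau,\tau^q,\tau^{-1},\tau^{-q}\}$ (using $\tau^{q^2+1}=1$). As an alternative to this torus analysis, $C_V(g)=0$ can instead be verified through Lemma~\ref{BrChar} applied to the known $\mathbb{F}_2$-Brauer characters of $Sz(q)$.
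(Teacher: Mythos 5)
Your construction is essentially the paper's: the paper also takes the natural $4$-dimensional module $V$ for ${}^2B_2(q)$ on which every nontrivial element of every maximal torus acts fixed-point-freely, forms $V\rtimes G$, and concludes. The difference is only in what is cited versus proved: the paper quotes \cite[Lemma~3.6]{GuTi03} for the fixed-point-freeness and \cite[Theorem~1.2]{CaMas} to pass from the single non-almost-simple group $V\rtimes G$ to unrecognizability, whereas you verify both ingredients by hand (the $V^k\rtimes G$ family together with the exponent-$2$/norm-map analysis correctly replaces the appeal to \cite{CaMas}, and Lemma~\ref{NormalSeriesAdj} handles the odd part of the graph). One inaccuracy in your torus analysis: in the standard matrix form the split torus of ${}^2B_2(q)$ acts on $V$ as $\operatorname{diag}(\lambda^{2^n+1},\lambda^{2^n},\lambda^{-2^n},\lambda^{-2^n-1})$, so the exponents are $\pm 2^n$ and $\pm(2^n+1)$, not $\pm 1$ and $\pm 2^{n+1}$; since $2^n+1$ is odd, your argument ``$s\nmid$ exponent because $s$ is odd'' does not cover that eigenvalue, and one must instead note that $\gcd(2^{2n+1}-1,\,2^{2n+2}-1)=1$, so no odd prime divides both $q-1$ and $2^{n+1}+1$ (equivalently $2^n+1$ up to a unit). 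With that repair (or by simply citing \cite[Lemma~3.6]{GuTi03}, or via Lemma~\ref{BrChar} as you suggest) the argument is complete; the non-split torus case is fine once one adds the observation that a nontrivial eigenvalue of an element of order dividing $q^2+1$ cannot lie in $\mathbb{F}_{q^2}$, so its Frobenius orbit has size $4$ and exhausts the spectrum.
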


\begin{proof} By \cite[Lemma~3.6]{GuTi03}, there exists a $4$-dimensional module $V$ over the field of order $q$ such that each nontrivial element from each maximal torus of $G$ acts fixed-point freely on $V$. Thus, $\Gamma(V \rtimes G)=\Gamma(G)$ and, therefore, $G$ is unrecognizable by \cite[Theorem~1.2]{CaMas}. \end{proof}

\begin{prop} Let $G \cong G_2(3)$. Then $G$ is unrecognizable by Gruenberg--Kegel graph.
\end{prop}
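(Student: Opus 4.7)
The plan is to apply \cite[Theorem~1.2]{CaMas}, which states that a group $G$ is almost recognizable if and only if every group $H$ with $\Gamma(H)=\Gamma(G)$ is almost simple; it therefore suffices to exhibit one non-almost-simple group with the Gruenberg--Kegel graph of $G_2(3)$. As already noted in the introduction, $\Gamma(G_2(3))=\Gamma(PSL_2(13))$: Lemma~\ref{graphG2} together with $k_3(3)=13$ and $k_6(3)=7$ from~\eqref{eq:k_i} give $R_1(3)=\varnothing$, $R_2(3)=\{2\}$, $R_3(3)=\{13\}$ and $R_6(3)=\{7\}$, so $\Gamma(G_2(3))$ is the graph on $\{2,3,7,13\}$ whose only edge is $\{2,3\}$, while the equality $\omega(PSL_2(13))=\{1,2,3,6,7,13\}$ produces the same graph. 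Hence it is enough to construct a non-almost-simple group sharing the graph of $L:=PSL_2(13)$.

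The candidate is $H:=V\rtimes L$, where $V$ is a nontrivial finite $\mathbb{F}_2 L$-module on which every element of $L$ of order $7$ or $13$ acts without nonzero fixed points. Such a $V$ exists: by Lemma~\ref{BrChar} it suffices to exhibit an irreducible $\mathbb{F}_2L$-module whose Brauer character $\beta$ satisfies $(\beta_{\langle g\rangle},1_{\langle g\rangle})=0$ for each $g\in L$ of order $7$ and each of order $13$, which is a finite check against the $2$-modular Brauer character table of $L_2(13)$.

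It remains to verify $\Gamma(H)=\Gamma(L)$. Since $2\in\pi(L)$, we have $\pi(H)=\pi(L)$. By Lemma~\ref{NormalSeriesAdj} applied to $1\unlhd V\unlhd H$ (with $\pi(V)=\{2\}$ and $H/V\cong L$), the edges of $\Gamma(H)$ restricted to $\{3,7,13\}$ are exactly those of $\Gamma(L)$, i.e.\ none; and the edge $\{2,3\}$ descends from $L$ via order-$6$ elements. For $r\in\{7,13\}$, any element of $H$ of order $2r$ must project to an element of $L$ of order dividing $2r$; since $2r\notin\omega(L)$ and a projection of order at most $2$ yields order at most $4$ in $H$, the projection must have order $r$, so the element has the form $(v,g)$ with $|g|=r$. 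As $r$ is odd, the canonical decomposition $V=V^{g}\oplus (g-1)V$ gives $(v,g)^{r}=(rv_0,1)=(v_0,1)$ in characteristic $2$, where $v_0\in V^g$ is the fixed component of $v$; this is nontrivial (producing order $2r$) only if $v_0\ne 0$, contradicting the fixed-point-free hypothesis. Consequently $\Gamma(H)=\Gamma(L)=\Gamma(G_2(3))$, and since $V\unlhd H$ is a nontrivial abelian normal subgroup, $H$ is not almost simple; \cite[Theorem~1.2]{CaMas} then delivers the unrecognizability of $G_2(3)$. The main technical point is the existence of the module $V$, a finite Brauer-character computation for $L_2(13)$.
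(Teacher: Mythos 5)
Your proposal is correct and follows essentially the same route as the paper: exhibit $V\rtimes PSL_2(13)$ with $V$ a $2$-modular module on which elements of order $7$ and $13$ act fixed-point freely, note $\Gamma(V\rtimes PSL_2(13))=\Gamma(PSL_2(13))=\Gamma(G_2(3))$, and invoke \cite[Theorem~1.2]{CaMas}. The only difference is that you defer the existence of $V$ to ``a finite Brauer-character check,'' whereas the paper pins it down as the $6$-dimensional irreducible module recorded in the Atlas of Brauer characters; your explicit verification that no element of order $2r$ arises in the semidirect product is a detail the paper leaves implicit.
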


\begin{proof} Using~\cite{Atlas}, we find that $\Gamma(G_2(3))$ is the following:

\medskip

\begin{center}
	\begin{tikzpicture}
		\tikzstyle{every node}=[draw,circle,fill=black,minimum size=4pt,
		inner sep=0pt]
		
		\draw (0,0) node (2) [label=left:$2$]{}
		++ (0:1.5cm) node (3) [label=right:$3$]{}
		++ (0:1.5cm) node (7) [label=right:$7$]{}
		++ (0:1.5cm) node (13) [label=right:$13$]{}
		(2)--(3)
		;
	\end{tikzpicture}
 \end{center}
\medskip
Moreover, $\Gamma(G)=\Gamma(PSL_2(13))$. By \cite[P.~9]{AtlasBrCh} and~Lemma~\ref{BrChar}, there is a $6$-dimensional irreducible $PSL_2(13)$-module $V$ over a field of characteristic two such that all elements in
$PSL_2(13)$ of orders $7$ and $13$ act fixed-point freely on $V$. Therefore, $$\Gamma(V\rtimes PSL_2(13))=\Gamma(PSL_2(13))=\Gamma(G).$$
Thus,  by \cite[Theorem~1.2]{CaMas}, $G$ is unrecognizable by Gruenberg--Kegel graph.
\end{proof}

\section{Almost recognizability of groups $E_8(q)$ by Gruenberg--Kegel graph}\label{E8Section}
To complete the proof of Main Theorem, it remains to consider the case of groups $E_8(q)$. In~\cite{Za13}, A.~V.~Zavarnitsine proved that if $G$ is a finite group such that $\Gamma(G)=\Gamma(E_8(q))$, where $q \equiv 0, \pm 1 \pmod{5}$, then $G \cong E_8(u)$ for some $u \equiv 0, \pm 1\pmod{5}$. The aim of this section is to prove the following similar result for the remaining cases for $q$.
\begin{T}\label{E8} Let $L=E_8(q)$, where $q \equiv \pm 2 \pmod{5}$ is a prime power, and $G$ be a group such that $\Gamma(G)=\Gamma(L)$. Then $G \cong E_8(u)$ for some prime power $u$ with $u \equiv \pm 2 \pmod{5}$.
\end{T}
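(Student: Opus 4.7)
The plan is to follow the same three-step framework as the proof of Theorem~\ref{F42F4_AlmRec} and to extend Zavarnitsine's approach from~\cite{Za13} to the remaining residue classes of $q$ modulo $5$. By Lemma~\ref{graphE8}, the sets $R_{15}(q)$, $R_{24}(q)$, $R_{30}(q)$ each form an isolated connected component of $\Gamma(L)$, so choosing one prime from each together with $2$ shows $t(L)\geq 4$ and $t(2,L)\geq 2$. Lemma~\ref{vas} and Thompson's theorem~\cite[Theorem~1]{Thompson} then yield a nonabelian simple group $S$ with $S\cong\operatorname{Inn}(S)\unlhd\overline{G}=G/K\leq\operatorname{Aut}(S)$ and nilpotent solvable radical $K$.

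The core step is to show $S\cong E_8(u)$ for some prime power $u$. Since $\Gamma(G)$ has at least four connected components, $S$ belongs to the list (1)--(9) of the introduction. All cases except $E_8$ are ruled out in parallel with Lemma~\ref{quasiF42F4}. The three isolated component sets $R_{15}(q)$, $R_{24}(q)$, $R_{30}(q)$ contain, by Lemma~\ref{form}, only primes satisfying the strong congruences $r\equiv 1\pmod{15}$, $r\equiv 1\pmod{24}$, $r\equiv 1\pmod{30}$ respectively, and by Lemma~\ref{vas}(2) must correspond to distinct cocliques in $\Gamma(S)$; this alone eliminates the alternating, sporadic, and most small-rank Lie-type candidates by inspection of~\cite{Atlas} and the tables of~\cite{ak}. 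For the remaining Lie-type candidates one combines the lower bound $t(L)\geq 12$ coming from Lemma~\ref{graphE8} with Lemmas~\ref{calc} and~\ref{zsigdiv}, reducing to $S\cong E_8(u)$ for a prime power $u$; the compatibility of exponents via Bang--Zsigmondy (Lemma~\ref{zsigm}) then forces $u$ to share the relevant cyclotomic structure with $q$.

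Once $S\cong E_8(u)$ is established, we show $K=1$ and $\overline{G}=S$. If $K\neq 1$, then Lemma~\ref{E8nontrivK_R24} gives $R_{24}(u)\subset\pi_1(G)$, which contradicts the isolation of the component containing $R_{24}(u)\subseteq R_{24}(q)$ in $\Gamma(L)$. To exclude outer automorphisms, note that $\operatorname{Out}(E_8(u))$ is cyclic generated by the Frobenius, so any $\tau\in\overline{G}\setminus S$ of prime order $s$ is a field automorphism with $C_S(\tau)\cong E_8(u^{1/s})$. For $s>3$ one has $\gcd(s,24)=1$, so $R_{24}(u^{1/s})\subseteq R_{24}(u)$ and any $r\in R_{24}(u^{1/s})\subseteq\pi(C_S(\tau))$ produces an element of order $rs$, contradicting $s\in\pi_1(G)$ together with the isolation of the component containing $r$. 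The cases $s\in\{2,3\}$ are handled by an analogous argument with $R_{15}(u^{1/s})$ in place of $R_{24}(u^{1/s})$ --- which is compatible with $s=2$ since $\gcd(15,2)=1$ --- and, for $s=3$, by targeting $R_{20}(u^{1/3})\subseteq R_{20}(u)$ together with the adjacency of $5$ to $R_{20}(u)$ coming from the dotted-edge convention. Hence $G\cong S\cong E_8(u)$.

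It remains to show $u\equiv\pm 2\pmod 5$. By Lemma~\ref{graphE8}, the graph $\Gamma(E_8(v))$ has exactly four connected components precisely when $5\in R_4(v)$, that is, when $v\equiv\pm 2\pmod 5$: in that case the dotted edge between $5$ and $R_{20}(v)$ is present and $R_{20}(v)$ joins the main component via $5$. For $v\equiv 0,\pm 1\pmod 5$, the set $R_{20}(v)$ is an additional isolated component and $\Gamma(E_8(v))$ has five connected components. Since $\Gamma(E_8(u))=\Gamma(G)=\Gamma(L)$ has exactly four connected components, we conclude $u\equiv\pm 2\pmod 5$, as claimed. The main obstacle is the quasirecognizability step in the second paragraph: although its structure parallels Lemma~\ref{quasiF42F4}, the case analysis for the remaining exceptional candidates and for $E_6^{\pm}(v)$ requires a careful, lengthy treatment that cannot be reduced to coclique comparisons alone.
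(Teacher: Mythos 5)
Your skeleton (Lemma~\ref{vas} plus Thompson's theorem, then identify $S$, then kill $K$ and $G/S$) is the paper's, but the central step --- pinning down $S$ --- is not actually carried out, and you concede as much in your last sentence. This is not a routine verification that can be waved through ``in parallel with Lemma~\ref{quasiF42F4}'': running through the whole list (1)--(9) of simple groups with at least three components would indeed be long. The paper avoids this entirely by using the \emph{quantitative} consequences of Lemmas~\ref{vas} and~\ref{graphE8}: $s(S)\ge s(L)=4$, $t(2,S)\ge t(2,L)=5$ and $t(S)\ge t(L)-1=11$, which against the tables of \cite{ak} and \cite{VaVd05,VaVd11} leave only $S\cong F_1$ or $S\cong E_8(u)$. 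Then $F_1$ dies because $15$, $24$, $30$ would have to divide $r-1$ for the three primes $41,59,71$ (Lemma~\ref{form}), and $E_8(u)$ with $u\equiv 0,\pm1\pmod 5$ dies by a coclique count: $\theta\cup\{5\}$ with $\theta=\{r_9(q),r_{14}(q),r_7(q),r_{18}(q),r_{15}(q),r_{24}(q),r_{30}(q)\}$ is a coclique of size $8$ in $\Gamma(G)$, at least six members of $\theta$ lie in $\pi(S)$ and, being nonadjacent to $5\in\{v\}\cup R_1(u)\cup R_2(u)$, are forced into the four cliques $R_{20}(u),R_{15}(u),R_{24}(u),R_{30}(u)$ --- so two of them are adjacent in $\Gamma(S)$, a contradiction. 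You gesture at $t(L)\ge 12$ but never convert it into the decisive table lookup, so the heart of the theorem is missing.

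A second, structural problem: you postpone determining the residue of $u$ modulo $5$ until after $G\cong E_8(u)$ is established, whereas the paper eliminates $u\equiv 0,\pm1\pmod5$ at the level of $S$, \emph{before} the $K=1$ and $G/S=1$ arguments. This ordering matters. Your $K=1$ step rests on the unjustified inclusion $R_{24}(u)\subseteq R_{24}(q)$; what is actually needed (and what the paper proves) is that $R_{24}(u)$ coincides with a $2$-free connected component of $\Gamma(G)$, which uses the matching $\pi_i(S)=R_{15}(u),R_{24}(u),R_{30}(u)$ available only once $s(S)=4$ is known. Likewise your $s=3$ field-automorphism argument needs $R_{20}(u)=R_{20}(q)\subset\pi_1(G)$ with $3$ nonadjacent to it in $\Gamma(L)$; if $u\equiv 0,\pm1\pmod5$ were still possible at that stage, $R_{20}(u)$ would be an isolated component of $\Gamma(S)$ and the argument would not close. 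Your final component-counting observation ($s(E_8(u))=5$ iff $u\equiv0,\pm1\pmod 5$) is correct but arrives too late to repair these earlier steps.
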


\begin{proof} 
Since $\Gamma(G)$ is disconnected, Lemma~\ref{vas} implies that there exists a nonabelian simple group $S$ such that $S\leq G/K\leq\operatorname{Aut}(S)$, where $K$ is the solvable radical of $G$. By the Thompson theorem on finite groups with fixed-point-free automorphisms of prime order~\cite[Theorem~1]{Thompson}, $K$ is nilpotent. By Lemmas~\ref{vas} and \ref{graphE8}, we find that $s(S) \ge s(L)=4$, $t(2,S)\geq t(2,L)=5$ and $t(S)\geq t(L)-1=11$. According to~\cite[Table~1]{ak} and \cite[Tables~2,~4, and~5]{VaVd05}, we find that  either $S \cong F_1$ or $S \cong E_8(u)$, where $u$ is a prime power.

\medskip

Suppose that $S \cong F_1$. According to \cite[Table~3]{ak}, we see that $s(S)=4$, for each $i\ge 2$, $|\pi_i(S)|=1$, and $\pi(S)\setminus \pi_1(S)=\{41, 59, 71\}$. At the same time, $\pi_2(G)=R_{15}(q)$, $\pi_3(G)=R_{24}(q)$, and $\pi_4(G)=R_{30}(q)$. By Lemma~\ref{form}, the numbers $15$, $24$, and $30$ divide $r_i-1$ for pairwise distinct primes $r_i$ from $\pi(S)\setminus \pi_1(S)$; a contradiction.

\medskip

Suppose that $S \cong E_8(u)$, where $u$ is a power of a prime $v$ and $u \equiv 0, \pm 1 \pmod {5}$.
Since $q\equiv\pm 2 \pmod{5}$, we find that $5\in R_4(q)$.
Denote $$\theta=\{r_9(q), r_{14}(q), r_7(q), r_{18}(q), r_{15}(q), r_{24}(q), r_{30}(q)\}.$$
By Lemma~\ref{graphE8}, we see that $\theta\cup\{5\}$ is a coclique of size 8 in $\Gamma(G)$.
It follows from Lemma~\ref{vas} that at least six elements of $\theta$ belong to $\pi(S)$.
Take any $r\in\pi(S)\cap\theta$. Since $r$ and 5 are nonadjacent in $\Gamma(G)$,
they are nonadjacent in $\Gamma(S)$. We know that $5\in\{v\}\cup R_1(u)\cup R_2(u)$
and hence $r\in R_{20}(u)\cup R_{15}(u)\cup R_{24}(u)\cup R_{30}(u)$ according to Lemma~\ref{graphE8}. This implies that at least two elements from $\theta\cap\pi(S)$ are adjacent in $\Gamma(S)$; a contradiction.

\medskip

Suppose that $S \cong E_8(u)$, where $v^l=u \equiv \pm 2 \pmod {5}$.
According to \cite[Table~3]{ak}, we can assume that $\pi_2(G)=R_{15}(q)$, $\pi_3(G)=R_{24}(q)$,
and $\pi_4(G)=R_{30}(q)$, while $\pi_2(S)=R_{15}(u)$, $\pi_2(S)=R_{24}(u)$,
and $\pi_4(S)=R_{30}(u)$. If $K\neq1$, then Lemma~\ref{E8nontrivK_R24} implies that
$R_{24}(u)\subset\pi_1(G)$; a contradiction since $R_{24}(u)$ must coincide with a connected component of $\Gamma(G)$ not containing 2. Therefore, we can assume that $S\leq G\leq\operatorname{Aut}(S)$.

\medskip

Now prove that $G/S=1$. If a prime $r$ divides $|G:S|$, then by \cite[Theorem~2.5.12, Definition~2.5.13, Proposition~4.9.1]{GoLySo98}, $G \setminus S$ contains a field automorphism $x$ of $S$ of order $r$ with $C_S(x) \ge E_8(u^{1/r})$. This implies that $r$ is adjacent in $\Gamma(G)$ to each prime from $\pi(E_8(u^{1/r}))$. Suppose that $r \not \in \{2, 3, 5\}$. By Lemma~\ref{calc}, $\Gamma(G)$ is connected and hence $\Gamma(G) \not = \Gamma(S)$.
If $r = 2$, then by Lemma~\ref{calc}, $R_{15}(u) \subset \pi_1(G)$; a contradiction.
If $r = 5$, then by Lemma~\ref{calc}, $r$ is adjacent in $\Gamma(G)$ to some primes from $R_{24}(u)$
and hence $R_{24}(u)\subset\pi_1(G)$; a contradiction.
Therefore, we can assume that $G/S$ is a 3-group.
By Lemma~\ref{graphE8}, we see that $r_i(q)$ is nonadjacent to 2 in $\Gamma(G)$ if and only if
$i\in\{15,20,24,30\}$. Similarly, a prime $r_i(u)$ is nonadjacent to 2 in $\Gamma(S)$ if and only if
$i\in\{15,20,24,30\}$. Since $R_{20}(q)\subset\pi_1(G)$ and $R_{20}(u)\subset\pi_1(G)$,
we infer that $R_{20}(q)=R_{20}(u)$. By Lemma~\ref{calc}, we infer that $3$ is adjacent in $\Gamma(G)$ to a prime from $R_{20}(u)$, therefore, $\Gamma(G)\not = \Gamma(S)$.

Thus, $G=S$. The proof of Theorem~\ref{E8} is complete.
\end{proof}

\medskip

\begin{cor} For each value of $q$, the group $E_8(q)$ is almost recognizable by Gruenberg--Kegel graph.
\end{cor}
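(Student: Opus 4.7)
The plan is to apply Lemma~\ref{vas} together with the Thompson theorem on fixed-point-free automorphisms of prime order to obtain a nonabelian simple group $S$ with $S \le G/K \le \operatorname{Aut}(S)$ and $K$ the nilpotent solvable radical of $G$. Reading off $s(L)=4$, $t(2,L) \ge 5$, and $t(L) \ge 12$ from Lemma~\ref{graphE8}, Lemma~\ref{vas} then forces $s(S) \ge 4$, $t(2,S) \ge 5$, and $t(S) \ge 11$. Consulting \cite[Table~1]{ak} and \cite[Tables~2,~4,~5]{VaVd05} narrows $S$ to either the Monster $F_1$ or $E_8(u)$ for some prime power $u$.

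Eliminating $S \cong F_1$ is a direct divisibility check: the three non-principal components of $\Gamma(F_1)$ are the singletons $\{41\},\{59\},\{71\}$, and they would have to correspond in some order with $R_{15}(q), R_{24}(q), R_{30}(q)$; but by Lemma~\ref{form} each $r \in R_i(q)$ satisfies $i \mid r-1$, while none of $15, 24, 30$ divides any of $40, 58, 70$. Eliminating $S \cong E_8(u)$ with $u \equiv 0,\pm 1 \pmod 5$ calls for a coclique argument: since $q \equiv \pm 2 \pmod 5$ places $5 \in R_4(q)$, Lemma~\ref{graphE8} shows that $\{5\}$ together with primitive primes $r_i(q)$ for $i \in \{7,9,14,15,18,24,30\}$ is an $8$-coclique in $\Gamma(G)$. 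Lemma~\ref{vas} forces at least six of these primitive primes into $\pi(S)$, where (by Lemma~\ref{NormalSeriesAdj}) they remain nonadjacent to $5$ in $\Gamma(S)$. Since the assumed congruence on $u$ places $5 \in \{v\} \cup R_1(u) \cup R_2(u)$, Lemma~\ref{graphE8} confines each such prime to $R_{15}(u) \cup R_{20}(u) \cup R_{24}(u) \cup R_{30}(u)$; pigeonhole lodges two of them in a single $R_j(u)$, hence adjacent in $\Gamma(S)$ -- a contradiction.

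For the remaining case $S \cong E_8(u)$ with $u \equiv \pm 2 \pmod 5$, component matching forces $R_{24}(u) = R_{24}(q)$ to be an entire connected component disjoint from $2$, so Lemma~\ref{E8nontrivK_R24} delivers $K = 1$ (otherwise $R_{24}(u) \subseteq \pi_1(G)$). To force $G = S$, suppose a prime $r$ divides $|G:S|$; by \cite[Theorem~2.5.12, Proposition~4.9.1]{GoLySo98} $G \setminus S$ contains a field automorphism $x$ of order $r$ with $C_S(x) \supseteq E_8(u^{1/r})$, and Lemma~\ref{calc} then supplies primes of $R_j(u^{1/r}) \subseteq R_j(u)$ adjacent to $r$ in $\Gamma(G)$ whenever $(j,r)=1$. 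Combined with Lemma~\ref{graphE8} this contradicts $\Gamma(G) = \Gamma(L)$ in each of the cases $r \notin \{2,3,5\}$ (connecting the graph), $r=2$ (forcing $R_{15}(u) \subseteq \pi_1(G)$), $r=5$ (linking $5$ to $R_{24}(u)$), and $r=3$ (linking $3$ to $R_{20}(u)=R_{20}(q)$, which is nonadjacent to $3$ in $\Gamma(L)$).

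The main obstacle is the pigeonhole coclique step that kills $E_8(u)$ with $u \equiv 0,\pm 1 \pmod 5$. Its correctness hinges on verifying via Lemma~\ref{zsigm} that each $r_i(q)$ in the coclique actually exists, tracking that at most one of the eight coclique primes can fail to lie in $\pi(S)$ by Lemma~\ref{vas}, and confirming that Lemma~\ref{graphE8} really collapses the survivors into only four adjacency classes $R_{15}(u), R_{20}(u), R_{24}(u), R_{30}(u)$ so that pigeonhole genuinely applies. The field-automorphism case split in the final step is delicate but routine once these inclusions are in hand.
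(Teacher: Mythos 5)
Your write-up is, in substance, a correct reconstruction of the paper's proof of Theorem~\ref{E8}, but that theorem is not the statement you were asked to prove. The Corollary asserts almost recognizability of $E_8(q)$ for \emph{every} $q$, and your argument works only under the standing hypothesis $q \equiv \pm 2 \pmod 5$: the $8$-coclique $\theta \cup \{5\}$ that drives your elimination of $E_8(u)$ with $u \equiv 0, \pm 1 \pmod 5$ exists precisely because $5 \in R_4(q)$, which fails when $q \equiv 0, \pm 1 \pmod 5$ (then $5 \in \{p\} \cup R_1(q) \cup R_2(q)$ and lies in the large clique $R$, so there is no coclique of size $8$ through $5$, and indeed $\Gamma(E_8(q))$ has a different component structure, with $R_{20}(q)$ forming a fifth component). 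For those residues the paper does not reprove anything; it invokes Zavarnitsine's theorem \cite[Theorem~1]{Za13}, which states that $\Gamma(G)=\Gamma(E_8(q))$ with $q \equiv 0,\pm1 \pmod 5$ forces $G \cong E_8(u)$ with $u \equiv 0,\pm1 \pmod 5$. Your proposal neither cites this nor supplies a substitute, so half of the Corollary is simply not covered.

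The second gap is that even where your argument applies, it stops at ``$G \cong E_8(u)$ for some prime power $u$,'' which is a quasirecognition-type conclusion, not almost recognizability. To finish you must bound the number of admissible $u$: since $\Gamma(E_8(u))=\Gamma(E_8(q))$ forces $\pi(E_8(u)) = \pi(E_8(q))$, a fixed finite set of primes, Lemma~\ref{NumSimple} gives only finitely many simple groups with prime spectrum inside that set, hence finitely many candidates for $u$ and for $G$ up to isomorphism. This last step is short, but without it (or some equivalent finiteness observation) the definition of ``almost recognizable'' is not met, since a priori infinitely many $u$ could yield the same graph (cf.\ Problem~2 in the paper, which records that even the uniqueness of $u$ is open).
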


\begin{proof} If $G$ is a group such that $\Gamma(G)=\Gamma(E_8(q))$, then by~\cite[Theorem~1]{Za13} and Theorem~\ref{E8}, we have $G \cong E_8(u)$ for a prime power $u$. By Lemma~\ref{NumSimple}, for a given $q$, the number of possibilities for $u$ is finite. This implies that there is only the finite number of possibilities for $G$ (up to isomorphism), in particular, $E_8(q)$ is almost recognizable by Gruenberg--Kegel graph.
\end{proof}

\medskip

\noindent{\bf Problem~2.} {\it Do there exist prime powers $q$ and $q_1$ with $q\not = q_1$ and $\Gamma(E_8(q))=\Gamma(E_8(q_1))${\rm?}}

\medskip

\begin{cor}\label{OrderGraphE8}
If $G$ is a finite group such that $\Gamma(G)=\Gamma(E_8(q))$ and $|G|=|E_8(q)|$ for some prime power $q$, then $G \cong E_8(q)$.
\end{cor}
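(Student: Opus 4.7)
The plan is to reduce this to the preceding Corollary and then extract $q$ from the group order. First I would apply Theorem~\ref{E8} together with Zavarnitsine's result~\cite[Theorem~1]{Za13} (or equivalently, the Corollary just proved) to the hypothesis $\Gamma(G)=\Gamma(E_8(q))$; this immediately yields $G \cong E_8(u)$ for some prime power $u$. The extra hypothesis $|G|=|E_8(q)|$ then translates into the purely numerical equation $|E_8(u)|=|E_8(q)|$, so the whole problem reduces to showing that the order of $E_8$ over a finite field determines the field.

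For the second step, recall that
\[
|E_8(x)| = x^{120}(x^{30}-1)(x^{24}-1)(x^{20}-1)(x^{18}-1)(x^{14}-1)(x^{12}-1)(x^{8}-1)(x^{2}-1),
\]
which, as a polynomial in the real variable $x$, is strictly increasing on $[2,\infty)$ (every factor is positive and strictly increasing there). Consequently the equation $|E_8(u)|=|E_8(q)|$, viewed over the real numbers, forces $u=q$, and in particular as prime powers. Hence $G\cong E_8(q)$.

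There is essentially no obstacle beyond verifying the monotonicity, which is elementary; all the serious work has been done in Theorem~\ref{E8} and in~\cite{Za13}. The Corollary is therefore a direct consequence of the preceding results together with the trivial observation that the order polynomial of $E_8$ is injective on the set of integers $\ge 2$.
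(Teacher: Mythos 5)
Your proposal is correct and follows essentially the same route as the paper: reduce to $G\cong E_8(u)$ via Theorem~\ref{E8} together with Zavarnitsine's result, and then conclude $u=q$ from the strict monotonicity of the order polynomial of $E_8$ on $[1,\infty)$. No gaps.
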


\begin{proof} By Theorem~\ref{E8}, if $\Gamma(G)=\Gamma(E_8(q))$, then $G\cong E_8(q_1)$ for some prime power $q_1$. It is clear that a function $$f(x)=x^{120}(x^2-1)(x^8-1)(x^{12}-1)(x^{14}-1)(x^{18}-1)(x^{20}-1)(x^{24}-1)(x^{30}-1)$$ strictly monotonically increases if $x \ge 1$. Thus, if
$$f(q_1)=|G|=|E_8(q)|=f(q),$$  then $q_1=q$, and, therefore, $G \cong E_8(q)$. \end{proof}

In \cite{GrechMazVas}, it was proved that if $G$ is a simple group and $H$ is a group such that $\omega(H)=\omega(G)$ and $|H|=|G|$, then $H \cong G$. Thus, each simple group is uniquely determined by its order and spectrum. It is known that if $q$ is odd and $n\ge 3$, then $\Gamma(P\Omega_{2n+1}(q))=\Gamma(PSp_{2n}(q))$ and $|P\Omega_{2n+1}(q)|=|PSp_{2n}(q)|$ but these groups are not isomorphic. Therefore, it is natural to consider the following problem.

\medskip

\noindent{\bf Problem 3.} {\it For which simple groups $G$ is the following true: if $H$ is a group with ${\Gamma(H) =\Gamma(G)}$ and $|H| = |G|$, then $H$ is isomorphic to $G${\rm?}}

\medskip

Problem~3 was formulated by B.~Khosravi in his survey paper~\cite[Question~4.2]{BKhosravi_survey}, by A.S. Kondrat'ev in frame of the open problems session of the 13th School-Conference on Group Theory Dedicated to V.~A.~Belonogov's 85th Birthday (see~\cite[Question~4]{Maslova_Conference}), and was independently formulated by W.~Shi in a personal communication with the first author. Also Problem~3 was formulated in the paper by P.~J.~Cameron and the first author (see~\cite[Problem~2]{CaMas}). It is clear that if a simple group is quasirecognizable by Gruenberg--Kegel graph, then Problem~3 solves in the positive for this group. At the same time, Corollary~\ref{OrderGraphE8} gives a solution of Problem~3 for finite simple groups $E_8(q)$ which are not necessary quasirecognizable by Gruenberg--Kegel graph.

\section{Acknowledgements}

The first author is supported by the Ministry of Science and Higher Education of the Russian Federation, project  075-02-2022-877 for the development of the regional scientific and educational mathematical center ''Ural Mathematical Center'' (for example, Section~\ref{E8Section}). The second author is supported by the Mathematical Center in Akademgorodok under the agreement No. 075-15-2022-281 with the Ministry of Science and Higher Education of the Russian Federation (for example, Section~\ref{F4^2F4G2Section}). The third author is supported by RAS Fundamental Research Program, project FWNF-2022-0002 (for example, Section~\ref{B2Section}).

\end{document}